%\newtheorem{algorithm}{Algorithm}[section]
%\input{tcilatex}%
%\iffalse
%\newenvironment{algorithm}[1][Algorithm]{\noindent\textbf{#1.} }{\ \rule{0.5em}{0.5em}}
%\fi
%\iffalse
%\newenvironment{algorithmic}[1]{\noindent\textbf{#1.} }{\ \rule{0.5em}{0.5em}}
%\fi
%\usepackage{showlabels}

\documentclass[a4paper,11pt]{article}
%%%%%%%%%%%%%%%%%%%%%%%%%%%%%%%%%%%%%%%%%%%%%%%%%%%%%%%%%%%%%%%%%%%%%%%%%%%%%%%%%%%%%%%%%%%%%%%%%%%%%%%%%%%%%%%%%%%%%%%%%%%%%%%%%%%%%%%%%%%%%%%%%%%%%%%%%%%%%%%%%%%%%%%%%%%%%%%%%%%%%%%%%%%%%%%%%%%%%%%%%%%%%%%%%%%%%%%%%%%%%%%%%%%%%%%%%%%%%%%%%%%%%%%%%%%%
\usepackage[hypertexnames=false]{hyperref}
\usepackage{amsmath, amssymb, amsthm}
\usepackage{enumerate}
\usepackage[square, comma, numbers, sort&compress]{natbib}
\usepackage{color,setspace}
\usepackage{cancel}
\usepackage{todonotes}
\usepackage{dsfont}
\usepackage[a4paper,margin=1in]{geometry}
\usepackage{algorithm}
\usepackage{algpseudocode}
\usepackage{graphicx}
\usepackage{float}
\usepackage{subcaption}

\setcounter{MaxMatrixCols}{10}
%TCIDATA{OutputFilter=LATEX.DLL}
%TCIDATA{Version=5.50.0.2960}
%TCIDATA{<META NAME="SaveForMode" CONTENT="1">}
%TCIDATA{BibliographyScheme=BibTeX}
%TCIDATA{LastRevised=Wednesday, January 15, 2020 19:39:34}
%TCIDATA{<META NAME="GraphicsSave" CONTENT="32">}
%TCIDATA{Language=American English}

\captionsetup[figure]{calcwidth=.85\linewidth}
\iffalse
\newenvironment{proof}[1][Proof]{\noindent\textbf{#1.} }{\ \rule{0.5em}{0.5em}}
\fi

\allowdisplaybreaks

\renewcommand{\tilde}{\widetilde}

\theoremstyle{plain}
\newtheorem{theorem}{Theorem}[section]
\newtheorem{lemma}[theorem]{Lemma}
\newtheorem{proposition}[theorem]{Proposition}
\newtheorem{corollary}[theorem]{Corollary}
\theoremstyle{definition}
\newtheorem{example}{Example}[section]
\newtheorem{remark}[theorem]{Remark}
\newtheorem{assumption}{Assumption}[section]
\numberwithin{equation}{section}
\numberwithin{algorithm}{section}
\def\theequation{\arabic{section}.\arabic{equation}}

\onehalfspacing
% Macros for Scientific Word 3.0 documents saved with the LaTeX filter.
%Copyright (C) 1994-97 TCI Software Research, Inc.
\typeout{TCILATEX Macros for Scientific Word 3.0 <19 May 1997>.}
\typeout{NOTICE:  This macro file is NOT proprietary and may be 
freely copied and distributed.}
\makeatletter
%
%%%%%%%%%%%%%%%%%%%%%%
% macros for time
\newcount\@hour\newcount\@minute\chardef\@x10\chardef\@xv60
\def\tcitime{
\def\@time{%
  \@minute\time\@hour\@minute\divide\@hour\@xv
  \ifnum\@hour<\@x 0\fi\the\@hour:%
  \multiply\@hour\@xv\advance\@minute-\@hour
  \ifnum\@minute<\@x 0\fi\the\@minute
  }}%

%%%%%%%%%%%%%%%%%%%%%%
% macro for hyperref
\@ifundefined{hyperref}{}{}

% macro for external program call
\@ifundefined{qExtProgCall}{\def\qExtProgCall#1#2#3#4#5#6{\relax}}{}
%%%%%%%%%%%%%%%%%%%%%%
%
% macros for graphics
%
%
%
\def\QCTOpt[#1]#2{%
  \def\QCTOptB{#1}
  \def\QCTOptA{#2}
}
\def\QCTNOpt#1{%
  \def\QCTOptA{#1}
  \let\QCTOptB\empty
}
\def\Qct{%
  \@ifnextchar[{%
    \QCTOpt}{\QCTNOpt}
}
\def\QCBOpt[#1]#2{%
  \def\QCBOptB{#1}
  \def\QCBOptA{#2}
}
\def\QCBNOpt#1{%
  \def\QCBOptA{#1}
  \let\QCBOptB\empty
}
\def\Qcb{%
  \@ifnextchar[{%
    \QCBOpt}{\QCBNOpt}
}
\def\PrepCapArgs{%
  \ifx\QCBOptA\empty
    \ifx\QCTOptA\empty
      {}%
    \else
      \ifx\QCTOptB\empty
        {\QCTOptA}%
      \else
        [\QCTOptB]{\QCTOptA}%
      \fi
    \fi
  \else
    \ifx\QCBOptA\empty
      {}%
    \else
      \ifx\QCBOptB\empty
        {\QCBOptA}%
      \else
        [\QCBOptB]{\QCBOptA}%
      \fi
    \fi
  \fi
}
\newcount\GRAPHICSTYPE
%\GRAPHICSTYPE 0 is for TurboTeX
%\GRAPHICSTYPE 1 is for DVIWindo (PostScript)
%%%(removed)%\GRAPHICSTYPE 2 is for psfig (PostScript)
\GRAPHICSTYPE=\z@
\def\GRAPHICSPS#1{%
 \ifcase\GRAPHICSTYPE%\GRAPHICSTYPE=0
   \special{ps: #1}%
 \or%\GRAPHICSTYPE=1
   \special{language "PS", include "#1"}%
%%%\or%\GRAPHICSTYPE=2
%%%  #1%
 \fi
}%
%
%
%
% \graffile{ body }                                  %#1
%          { contentswidth (scalar)  }               %#2
%          { contentsheight (scalar) }               %#3
%          { vertical shift when in-line (scalar) }  %#4
\def\graffile#1#2#3#4{%
%%% \ifnum\GRAPHICSTYPE=\tw@
%%%  %Following if using psfig
%%%  \@ifundefined{psfig}{\input psfig.tex}{}%
%%%  \psfig{file=#1, height=#3, width=#2}%
%%% \else
  %Following for all others
  % JCS - added BOXTHEFRAME, see below
    \bgroup
    \leavevmode
    \@ifundefined{bbl@deactivate}{\def~{\string~}}{\activesoff}
    \raise -#4 \BOXTHEFRAME{%
        \hbox to #2{\raise #3\hbox to #2{\null #1\hfil}}}%
    \egroup
}%
%
% A box for drafts
\def\draftbox#1#2#3#4{%
 \leavevmode\raise -#4 \hbox{%
  \frame{\rlap{\protect\tiny #1}\hbox to #2%
   {\vrule height#3 width\z@ depth\z@\hfil}%
  }%
 }%
}%
\newcount\draft
\draft=\z@

\newif\ifwasdraft
\wasdraftfalse

%  \GRAPHIC{ body }                                  %#1
%          { draft name }                            %#2
%          { contentswidth (scalar)  }               %#3
%          { contentsheight (scalar) }               %#4
%          { vertical shift when in-line (scalar) }  %#5
\def\GRAPHIC#1#2#3#4#5{%
 \ifnum\draft=\@ne\draftbox{#2}{#3}{#4}{#5}%
  \else\graffile{#1}{#3}{#4}{#5}%
  \fi
 }%
\def\addtoLaTeXparams#1{%
    \edef\LaTeXparams{\LaTeXparams #1}}%
%
% JCS -  added a switch BoxFrame that can 
% be set by including X in the frame params.
% If set a box is drawn around the frame.

\newif\ifBoxFrame \BoxFramefalse
\newif\ifOverFrame \OverFramefalse
\newif\ifUnderFrame \UnderFramefalse

\def\BOXTHEFRAME#1{%
   \hbox{%
      \ifBoxFrame
         \frame{#1}%
      \else
         {#1}%
      \fi
   }%
}

\def\doFRAMEparams#1{\BoxFramefalse\OverFramefalse\UnderFramefalse\readFRAMEparams#1\end}%
\def\readFRAMEparams#1{%
 \ifx#1\end%
  \let\next=\relax
  \else
  \ifx#1i\dispkind=\z@\fi
  \ifx#1d\dispkind=\@ne\fi
  \ifx#1f\dispkind=\tw@\fi
  \ifx#1t\addtoLaTeXparams{t}\fi
  \ifx#1b\addtoLaTeXparams{b}\fi
  \ifx#1p\addtoLaTeXparams{p}\fi
  \ifx#1h\addtoLaTeXparams{h}\fi
  \ifx#1X\BoxFrametrue\fi
  \ifx#1O\OverFrametrue\fi
  \ifx#1U\UnderFrametrue\fi
  \ifx#1w
    \ifnum\draft=1\wasdrafttrue\else\wasdraftfalse\fi
    \draft=\@ne
  \fi
  \let\next=\readFRAMEparams
  \fi
 \next
 }%
%
%Macro for In-line graphics object
%   \IFRAME{ contentswidth (scalar)  }               %#1
%          { contentsheight (scalar) }               %#2
%          { vertical shift when in-line (scalar) }  %#3
%          { draft name }                            %#4
%          { body }                                  %#5
%          { caption}                                %#6

\def\IFRAME#1#2#3#4#5#6{%
      \bgroup
      \let\QCTOptA\empty
      \let\QCTOptB\empty
      \let\QCBOptA\empty
      \let\QCBOptB\empty
      #6%
      \parindent=0pt%
      \leftskip=0pt
      \rightskip=0pt
      \setbox0 = \hbox{\QCBOptA}%
      \@tempdima = #1\relax
      \ifOverFrame
          % Do this later
          \typeout{This is not implemented yet}%
          \show\HELP
      \else
         \ifdim\wd0>\@tempdima
            \advance\@tempdima by \@tempdima
            \ifdim\wd0 >\@tempdima
               \textwidth=\@tempdima
               \setbox1 =\vbox{%
                  \noindent\hbox to \@tempdima{\hfill\GRAPHIC{#5}{#4}{#1}{#2}{#3}\hfill}\\%
                  \noindent\hbox to \@tempdima{\parbox[b]{\@tempdima}{\QCBOptA}}%
               }%
               \wd1=\@tempdima
            \else
               \textwidth=\wd0
               \setbox1 =\vbox{%
                 \noindent\hbox to \wd0{\hfill\GRAPHIC{#5}{#4}{#1}{#2}{#3}\hfill}\\%
                 \noindent\hbox{\QCBOptA}%
               }%
               \wd1=\wd0
            \fi
         \else
            %\show\BBB
            \ifdim\wd0>0pt
              \hsize=\@tempdima
              \setbox1 =\vbox{%
                \unskip\GRAPHIC{#5}{#4}{#1}{#2}{0pt}%
                \break
                \unskip\hbox to \@tempdima{\hfill \QCBOptA\hfill}%
              }%
              \wd1=\@tempdima
           \else
              \hsize=\@tempdima
              \setbox1 =\vbox{%
                \unskip\GRAPHIC{#5}{#4}{#1}{#2}{0pt}%
              }%
              \wd1=\@tempdima
           \fi
         \fi
         \@tempdimb=\ht1
         \advance\@tempdimb by \dp1
         \advance\@tempdimb by -#2%
         \advance\@tempdimb by #3%
         \leavevmode
         \raise -\@tempdimb \hbox{\box1}%
      \fi
      \egroup%
}%
%
%Macro for Display graphics object
%   \DFRAME{ contentswidth (scalar)  }               %#1
%          { contentsheight (scalar) }               %#2
%          { draft label }                           %#3
%          { name }                                  %#4
%          { caption}                                %#5
\def\DFRAME#1#2#3#4#5{%
 \begin{center}
     \let\QCTOptA\empty
     \let\QCTOptB\empty
     \let\QCBOptA\empty
     \let\QCBOptB\empty
     \ifOverFrame 
        #5\QCTOptA\par
     \fi
     \GRAPHIC{#4}{#3}{#1}{#2}{\z@}
     \ifUnderFrame 
        \nobreak\par\nobreak#5\QCBOptA
     \fi
 \end{center}%
 }%
%
%Macro for Floating graphic object
%   \FFRAME{ framedata f|i tbph x F|T }              %#1
%          { contentswidth (scalar)  }               %#2
%          { contentsheight (scalar) }               %#3
%          { caption }                               %#4
%          { label }                                 %#5
%          { draft name }                            %#6
%          { body }                                  %#7
\def\FFRAME#1#2#3#4#5#6#7{%
 \begin{figure}[#1]%
  \let\QCTOptA\empty
  \let\QCTOptB\empty
  \let\QCBOptA\empty
  \let\QCBOptB\empty
  \ifOverFrame
    #4
    \ifx\QCTOptA\empty
    \else
      \ifx\QCTOptB\empty
        \caption{\QCTOptA}%
      \else
        \caption[\QCTOptB]{\QCTOptA}%
      \fi
    \fi
    \ifUnderFrame\else
      \label{#5}%
    \fi
  \else
    \UnderFrametrue%
  \fi
  \begin{center}\GRAPHIC{#7}{#6}{#2}{#3}{\z@}\end{center}%
  \ifUnderFrame
    #4
    \ifx\QCBOptA\empty
      \caption{}%
    \else
      \ifx\QCBOptB\empty
        \caption{\QCBOptA}%
      \else
        \caption[\QCBOptB]{\QCBOptA}%
      \fi
    \fi
    \label{#5}%
  \fi
  \end{figure}%
 }%
%
%
%    \FRAME{ framedata f|i tbph x F|T }              %#1
%          { contentswidth (scalar)  }               %#2
%          { contentsheight (scalar) }               %#3
%          { vertical shift when in-line (scalar) }  %#4
%          { caption }                               %#5
%          { label }                                 %#6
%          { name }                                  %#7
%          { body }                                  %#8
%
%    framedata is a string which can contain the following
%    characters: idftbphxFT
%    Their meaning is as follows:
%             i, d or f : in-line, display, or floating
%             t,b,p,h   : LaTeX floating placement options
%             x         : fit contents box to contents
%             F or T    : Figure or Table. 
%                         Later this can expand
%                         to a more general float class.
%
%
\newcount\dispkind%

\def\makeactives{
  \catcode`\"=\active
  \catcode`\;=\active
  \catcode`\:=\active
  \catcode`\'=\active
  \catcode`\~=\active
}
\bgroup
   \makeactives
   \gdef\activesoff{%
      \def"{\string"}
      \def;{\string;}
      \def:{\string:}
      \def'{\string'}
      \def~{\string~}
      %\bbl@deactivate{"}%
      %\bbl@deactivate{;}%
      %\bbl@deactivate{:}%
      %\bbl@deactivate{'}%
    }
\egroup

\def\FRAME#1#2#3#4#5#6#7#8{%
 \bgroup
 \ifnum\draft=\@ne
   \wasdrafttrue
 \else
   \wasdraftfalse%
 \fi
 \def\LaTeXparams{}%
 \dispkind=\z@
 \def\LaTeXparams{}%
 \doFRAMEparams{#1}%
 \ifnum\dispkind=\z@\IFRAME{#2}{#3}{#4}{#7}{#8}{#5}\else
  \ifnum\dispkind=\@ne\DFRAME{#2}{#3}{#7}{#8}{#5}\else
   \ifnum\dispkind=\tw@
    \edef\@tempa{\noexpand\FFRAME{\LaTeXparams}}%
    \@tempa{#2}{#3}{#5}{#6}{#7}{#8}%
    \fi
   \fi
  \fi
  \ifwasdraft\draft=1\else\draft=0\fi{}%
  \egroup
 }%
%
% This macro added to let SW gobble a parameter that
% should not be passed on and expanded. 

\def\TEXUX#1{"texux"}

%
% Macros for text attributes:
%
%
%
%
%%%%%%%%%%%%%%%%%%%%%%%%%%%%%%%%%%%%%%%%%%%%%%%%%%%%%%%%%%%%%%%%%%%%%%%%
%
%
% macros for user - defined functions
%
%
% macro for unit names
%

%
% miscellaneous 
\long\def\QQQ#1#2{%
     \long\expandafter\def\csname#1\endcsname{#2}}%
\@ifundefined{QTP}{\def\QTP#1{}}{}
\@ifundefined{QEXCLUDE}{\def\QEXCLUDE#1{}}{}
\@ifundefined{Qlb}{}{}
\@ifundefined{Qlt}{}{}
\long\def\QQA#1#2{}%
\def\QTR#1#2{{\csname#1\endcsname #2}}%(gp) Is this the best?
\def\EXPAND#1[#2]#3{}%
\def\NOEXPAND#1[#2]#3{}%
\def\LaTeXparent#1{}%
\def\ChildStyles#1{}%
\def\ChildDefaults#1{}%
\def\QTagDef#1#2#3{}%

% Constructs added with Scientific Notebook
\@ifundefined{correctchoice}{}{}
\@ifundefined{HTML}{\def\HTML#1{\relax}}{}
\@ifundefined{TCIIcon}{\def\TCIIcon#1#2#3#4{\relax}}{}
\if@compatibility
  \typeout{Not defining UNICODE or CustomNote commands for LaTeX 2.09.}
\else
  \providecommand{\UNICODE}[2][]{}
  
\fi

%
% Macros for style editor docs
\@ifundefined{StyleEditBeginDoc}{}{}
%
% Macros for footnotes
\def\QQfnmark#1{\footnotemark}

%
% Macros for indexing.
%
\@ifundefined{TCIMAKEINDEX}{}{\makeindex}%
%
% Attempts to avoid problems with other styles
\@ifundefined{abstract}{%
 \def\abstract{%
  \if@twocolumn
   \section*{Abstract (Not appropriate in this style!)}%
   \else \small 
   \begin{center}{\bf Abstract\vspace{-.5em}\vspace{\z@}}\end{center}%
   \quotation 
   \fi
  }%
 }{%
 }%
\@ifundefined{endabstract}{\def\endabstract
  {\if@twocolumn\else\endquotation\fi}}{}%
\@ifundefined{maketitle}{\def\maketitle#1{}}{}%
\@ifundefined{affiliation}{\def\affiliation#1{}}{}%
\@ifundefined{proof}{}{}%
\@ifundefined{endproof}{}{}%
\@ifundefined{newfield}{\def\newfield#1#2{}}{}%
\@ifundefined{chapter}{\def\chapter#1{\par(Chapter head:)#1\par }%
 \newcount\c@chapter}{}%
\@ifundefined{part}{\def\part#1{\par(Part head:)#1\par }}{}%
\@ifundefined{section}{\def\section#1{\par(Section head:)#1\par }}{}%
\@ifundefined{subsection}{\def\subsection#1%
 {\par(Subsection head:)#1\par }}{}%
\@ifundefined{subsubsection}{\def\subsubsection#1%
 {\par(Subsubsection head:)#1\par }}{}%
\@ifundefined{paragraph}{\def\paragraph#1%
 {\par(Subsubsubsection head:)#1\par }}{}%
\@ifundefined{subparagraph}{\def\subparagraph#1%
 {\par(Subsubsubsubsection head:)#1\par }}{}%
%%%%%%%%%%%%%%%%%%%%%%%%%%%%%%%%%%%%%%%%%%%%%%%%%%%%%%%%%%%%%%%%%%%%%%%%
% These symbols are not recognized by LaTeX
\@ifundefined{therefore}{}{}%
\@ifundefined{backepsilon}{}{}%
\@ifundefined{yen}{}{}%
\@ifundefined{registered}{%
   \def\registered{\relax\ifmmode{}\r@gistered
                    \else$\m@th\r@gistered$\fi}%
 \def\r@gistered{^{\ooalign
  {\hfil\raise.07ex\hbox{$\scriptstyle\rm\text{R}$}\hfil\crcr
  \mathhexbox20D}}}}{}%
\@ifundefined{Eth}{}{}%
\@ifundefined{eth}{}{}%
\@ifundefined{Thorn}{}{}%
\@ifundefined{thorn}{}{}%
% A macro to allow any symbol that requires math to appear in text
%
\@ifundefined{degree}{}{}%
%
% macros for T3TeX files
\newdimen\theight
\def\Column{%
 \vadjust{\setbox\z@=\hbox{\scriptsize\quad\quad tcol}%
  \theight=\ht\z@\advance\theight by \dp\z@\advance\theight by \lineskip
  \kern -\theight \vbox to \theight{%
   \rightline{\rlap{\box\z@}}%
   \vss
   }%
  }%
 }%
\def\qed{%
 \ifhmode\unskip\nobreak\fi\ifmmode\ifinner\else\hskip5\p@\fi\fi
 \hbox{\hskip5\p@\vrule width4\p@ height6\p@ depth1.5\p@\hskip\p@}%
 }%
\def\miss{\hbox{\vrule height2\p@ width 2\p@ depth\z@}}%
%
%           %always translated to \left| or \right|
%
\def\tcol#1{{\baselineskip=6\p@ \vcenter{#1}} \Column}  %
%
%                 %dummy entry in column 
%             %column entry
%               %column entry (not math)
%
\@ifundefined{note}{}{}%

\def\newfmtname{LaTeX2e}
% No longer load latexsym.  This is now handled by SWP, which uses amsfonts if necessary

\ifx\fmtname\newfmtname
  \DeclareOldFontCommand{\rm}{\normalfont\rmfamily}{\mathrm}
  \DeclareOldFontCommand{\sf}{\normalfont\sffamily}{\mathsf}
  \DeclareOldFontCommand{\tt}{\normalfont\ttfamily}{\mathtt}
  \DeclareOldFontCommand{\bf}{\normalfont\bfseries}{\mathbf}
  \DeclareOldFontCommand{\it}{\normalfont\itshape}{\mathit}
  \DeclareOldFontCommand{\sl}{\normalfont\slshape}{\@nomath\sl}
  \DeclareOldFontCommand{\sc}{\normalfont\scshape}{\@nomath\sc}
\fi

%
% Greek bold macros
% Redefine all of the math symbols 
% which might be bolded	 - there are 
% probably others to add to this list

\def\alpha{{\Greekmath 010B}}%
\def\beta{{\Greekmath 010C}}%
\def\gamma{{\Greekmath 010D}}%
\def\delta{{\Greekmath 010E}}%
\def\epsilon{{\Greekmath 010F}}%
\def\zeta{{\Greekmath 0110}}%
\def\eta{{\Greekmath 0111}}%
\def\theta{{\Greekmath 0112}}%
\def\iota{{\Greekmath 0113}}%
\def\kappa{{\Greekmath 0114}}%
\def\lambda{{\Greekmath 0115}}%
\def\mu{{\Greekmath 0116}}%
\def\nu{{\Greekmath 0117}}%
\def\xi{{\Greekmath 0118}}%
\def\pi{{\Greekmath 0119}}%
\def\rho{{\Greekmath 011A}}%
\def\sigma{{\Greekmath 011B}}%
\def\tau{{\Greekmath 011C}}%
\def\upsilon{{\Greekmath 011D}}%
\def\phi{{\Greekmath 011E}}%
\def\chi{{\Greekmath 011F}}%
\def\psi{{\Greekmath 0120}}%
\def\omega{{\Greekmath 0121}}%
\def\varepsilon{{\Greekmath 0122}}%
\def\vartheta{{\Greekmath 0123}}%
\def\varpi{{\Greekmath 0124}}%
\def\varrho{{\Greekmath 0125}}%
\def\varsigma{{\Greekmath 0126}}%
\def\varphi{{\Greekmath 0127}}%

\def\nabla{{\Greekmath 0272}}
\def\FindBoldGroup{%
   {\setbox0=\hbox{$\mathbf{x\global\edef\theboldgroup{\the\mathgroup}}$}}%
}

\def\Greekmath#1#2#3#4{%
    \if@compatibility
        \ifnum\mathgroup=\symbold
           \mathchoice{\mbox{\boldmath$\displaystyle\mathchar"#1#2#3#4$}}%
                      {\mbox{\boldmath$\textstyle\mathchar"#1#2#3#4$}}%
                      {\mbox{\boldmath$\scriptstyle\mathchar"#1#2#3#4$}}%
                      {\mbox{\boldmath$\scriptscriptstyle\mathchar"#1#2#3#4$}}%
        \else
           \mathchar"#1#2#3#4% 
        \fi 
    \else 
        \FindBoldGroup
        \ifnum\mathgroup=\theboldgroup % For 2e
           \mathchoice{\mbox{\boldmath$\displaystyle\mathchar"#1#2#3#4$}}%
                      {\mbox{\boldmath$\textstyle\mathchar"#1#2#3#4$}}%
                      {\mbox{\boldmath$\scriptstyle\mathchar"#1#2#3#4$}}%
                      {\mbox{\boldmath$\scriptscriptstyle\mathchar"#1#2#3#4$}}%
        \else
           \mathchar"#1#2#3#4% 
        \fi     	    
	  \fi}

\newif\ifGreekBold  \GreekBoldfalse
\let\SAVEPBF=\pbf
\def\pbf{\GreekBoldtrue\SAVEPBF}%

\@ifundefined{theorem}{\newtheorem{theorem}{Theorem}}{}
\@ifundefined{lemma}{\newtheorem{lemma}[theorem]{Lemma}}{}
\@ifundefined{corollary}{}{}
\@ifundefined{conjecture}{}{}
\@ifundefined{proposition}{}{}
\@ifundefined{axiom}{}{}
\@ifundefined{remark}{\newtheorem{remark}{Remark}}{}
\@ifundefined{example}{\newtheorem{example}{Example}}{}
\@ifundefined{exercise}{}{}
\@ifundefined{definition}{}{}

\@ifundefined{mathletters}{%
  \newcounter{equationnumber}  
  \def\mathletters{%
     \addtocounter{equation}{1}
     \edef\@currentlabel{\theequation}%
     \setcounter{equationnumber}{\c@equation}
     \setcounter{equation}{0}%
     \edef\theequation{\@currentlabel\noexpand\alph{equation}}%
  }
  
}{}

%Logos
\@ifundefined{BibTeX}{%
    \def\BibTeX{{\rm B\kern-.05em{\sc i\kern-.025em b}\kern-.08em
                 T\kern-.1667em\lower.7ex\hbox{E}\kern-.125emX}}}{}%
\@ifundefined{AmS}%
    {\def\AmS{{\protect\usefont{OMS}{cmsy}{m}{n}%
                A\kern-.1667em\lower.5ex\hbox{M}\kern-.125emS}}}{}%
\@ifundefined{AmSTeX}{}{}%
%

% This macro is a fix to eqnarray
\def\@@eqncr{\let\@tempa\relax
    \ifcase\@eqcnt \def\@tempa{& & &}\or \def\@tempa{& &}%
      \else \def\@tempa{&}\fi
     \@tempa
     \if@eqnsw
        \iftag@
           \@taggnum
        \else
           \@eqnnum\stepcounter{equation}%
        \fi
     \fi
     \global\tag@false
     \global\@eqnswtrue
     \global\@eqcnt\z@\cr}

\def\TCItag{\@ifnextchar*{\@TCItagstar}{\@TCItag}}
\def\@TCItag#1{%
    \global\tag@true
    \global\def\@taggnum{(#1)}}
\def\@TCItagstar*#1{%
    \global\tag@true
    \global\def\@taggnum{#1}}
%
%%%%%%%%%%%%%%%%%%%%%%%%%%%%%%%%%%%%%%%%%%%%%%%%%%%%%%%%%%%%%%%%%%%%%
%
\def\tfrac#1#2{{\textstyle {#1 \over #2}}}%
\def\dfrac#1#2{{\displaystyle {#1 \over #2}}}%
%
%
%
%
%
%
%
%
%
%
%
%
%
%
%
%
%
%
%
% Macros for text size operators:
%
%
%
%
%
%
%
%
%
%
%
%
%
%
%
%
%
%
%
%
%
%Macros for display size operators:
%
%
%
%
%
%
%
\def\dsum{\mathop{\displaystyle \sum }}%
%
%
%
%
%
%
%
%
%
%
%

%%%%%%%%%%%%%%%%%%%%%%%%%%%%%%%%%%%%%%%%%%%%%%%%%%%%%%%%%%%%%%%%%%%%%%%
% NOTE: The rest of this file is read only if amstex has not been
% loaded.  This section is used to define amstex constructs in the
% event they have not been defined.
%
%
\ifx\ds@amstex\relax
   \message{amstex already loaded}\makeatother % 2.09 compatability
\else
   \@ifpackageloaded{amsmath}%
      {\message{amsmath already loaded}\makeatother }
      {}
   \@ifpackageloaded{amstex}%
      {\message{amstex already loaded}\makeatother }
      {}
   \@ifpackageloaded{amsgen}%
      {\message{amsgen already loaded}\makeatother }
      {}
\fi
%%%%%%%%%%%%%%%%%%%%%%%%%%%%%%%%%%%%%%%%%%%%%%%%%%%%%%%%%%%%%%%%%%%%%%%%
%%
%
%
%  Macros to define some AMS LaTeX constructs when 
%  AMS LaTeX has not been loaded
% 
% These macros are copied from the AMS-TeX package for doing
% multiple integrals.
%
\let\DOTSI\relax
\def\RIfM@{\relax\ifmmode}%
\def\FN@{\futurelet\next}%
\newcount\intno@
\def\iint{\DOTSI\intno@\tw@\FN@\ints@}%
\def\iiint{\DOTSI\intno@\thr@@\FN@\ints@}%
\def\iiiint{\DOTSI\intno@4 \FN@\ints@}%
\def\idotsint{\DOTSI\intno@\z@\FN@\ints@}%
\def\ints@{\findlimits@\ints@@}%
\newif\iflimtoken@
\newif\iflimits@
\def\findlimits@{\limtoken@true\ifx\next\limits\limits@true
 \else\ifx\next\nolimits\limits@false\else
 \limtoken@false\ifx\ilimits@\nolimits\limits@false\else
 \ifinner\limits@false\else\limits@true\fi\fi\fi\fi}%
\def\multint@{\int\ifnum\intno@=\z@\intdots@                          %1
 \else\intkern@\fi                                                    %2
 \ifnum\intno@>\tw@\int\intkern@\fi                                   %3
 \ifnum\intno@>\thr@@\int\intkern@\fi                                 %4
 \int}%                                                               %5
\def\multintlimits@{\intop\ifnum\intno@=\z@\intdots@\else\intkern@\fi
 \ifnum\intno@>\tw@\intop\intkern@\fi
 \ifnum\intno@>\thr@@\intop\intkern@\fi\intop}%
\def\intic@{%
    \mathchoice{\hskip.5em}{\hskip.4em}{\hskip.4em}{\hskip.4em}}%
\def\negintic@{\mathchoice
 {\hskip-.5em}{\hskip-.4em}{\hskip-.4em}{\hskip-.4em}}%
\def\ints@@{\iflimtoken@                                              %1
 \def\ints@@@{\iflimits@\negintic@
   \mathop{\intic@\multintlimits@}\limits                             %2
  \else\multint@\nolimits\fi                                          %3
  \eat@}%                                                             %4
 \else                                                                %5
 \def\ints@@@{\iflimits@\negintic@
  \mathop{\intic@\multintlimits@}\limits\else
  \multint@\nolimits\fi}\fi\ints@@@}%
\def\intkern@{\mathchoice{\!\!\!}{\!\!}{\!\!}{\!\!}}%
\def\plaincdots@{\mathinner{\cdotp\cdotp\cdotp}}%
\def\intdots@{\mathchoice{\plaincdots@}%
 {{\cdotp}\mkern1.5mu{\cdotp}\mkern1.5mu{\cdotp}}%
 {{\cdotp}\mkern1mu{\cdotp}\mkern1mu{\cdotp}}%
 {{\cdotp}\mkern1mu{\cdotp}\mkern1mu{\cdotp}}}%
%
%
%  These macros are for doing the AMS \text{} construct
%
\def\RIfM@{\relax\protect\ifmmode}
\def\text{\RIfM@\expandafter\text@\else\expandafter\mbox\fi}
\let\nfss@text\text
\def\text@#1{\mathchoice
   {\textdef@\displaystyle\f@size{#1}}%
   {\textdef@\textstyle\tf@size{\firstchoice@false #1}}%
   {\textdef@\textstyle\sf@size{\firstchoice@false #1}}%
   {\textdef@\textstyle \ssf@size{\firstchoice@false #1}}%
   \glb@settings}

\def\textdef@#1#2#3{\hbox{{%
                    \everymath{#1}%
                    \let\f@size#2\selectfont
                    #3}}}
\newif\iffirstchoice@
\firstchoice@true
%
%These are the AMS constructs for multiline limits.
%
\def\Let@{\relax\iffalse{\fi\let\\=\cr\iffalse}\fi}%
\def\vspace@{\def\vspace##1{\crcr\noalign{\vskip##1\relax}}}%
\def\multilimits@{\bgroup\vspace@\Let@
 \baselineskip\fontdimen10 \scriptfont\tw@
 \advance\baselineskip\fontdimen12 \scriptfont\tw@
 \lineskip\thr@@\fontdimen8 \scriptfont\thr@@
 \lineskiplimit\lineskip
 \vbox\bgroup\ialign\bgroup\hfil$\m@th\scriptstyle{##}$\hfil\crcr}%
\def\Sb{_\multilimits@}%
\def\endSb{\crcr\egroup\egroup\egroup}%
\def\Sp{^\multilimits@}%

%
%
%These are AMS constructs for horizontal arrows
%
\newdimen\ex@
\ex@.2326ex
\def\rightarrowfill@#1{$#1\m@th\mathord-\mkern-6mu\cleaders
 \hbox{$#1\mkern-2mu\mathord-\mkern-2mu$}\hfill
 \mkern-6mu\mathord\rightarrow$}%
\def\leftarrowfill@#1{$#1\m@th\mathord\leftarrow\mkern-6mu\cleaders
 \hbox{$#1\mkern-2mu\mathord-\mkern-2mu$}\hfill\mkern-6mu\mathord-$}%
\def\leftrightarrowfill@#1{$#1\m@th\mathord\leftarrow
\mkern-6mu\cleaders
 \hbox{$#1\mkern-2mu\mathord-\mkern-2mu$}\hfill
 \mkern-6mu\mathord\rightarrow$}%
\def\overrightarrow{\mathpalette\overrightarrow@}%
\def\overrightarrow@#1#2{\vbox{\ialign{##\crcr\rightarrowfill@#1\crcr
 \noalign{\kern-\ex@\nointerlineskip}$\m@th\hfil#1#2\hfil$\crcr}}}%

\def\overleftarrow{\mathpalette\overleftarrow@}%
\def\overleftarrow@#1#2{\vbox{\ialign{##\crcr\leftarrowfill@#1\crcr
 \noalign{\kern-\ex@\nointerlineskip}$\m@th\hfil#1#2\hfil$\crcr}}}%
\def\overleftrightarrow{\mathpalette\overleftrightarrow@}%
\def\overleftrightarrow@#1#2{\vbox{\ialign{##\crcr
   \leftrightarrowfill@#1\crcr
 \noalign{\kern-\ex@\nointerlineskip}$\m@th\hfil#1#2\hfil$\crcr}}}%
\def\underrightarrow{\mathpalette\underrightarrow@}%
\def\underrightarrow@#1#2{\vtop{\ialign{##\crcr$\m@th\hfil#1#2\hfil
  $\crcr\noalign{\nointerlineskip}\rightarrowfill@#1\crcr}}}%

\def\underleftarrow{\mathpalette\underleftarrow@}%
\def\underleftarrow@#1#2{\vtop{\ialign{##\crcr$\m@th\hfil#1#2\hfil
  $\crcr\noalign{\nointerlineskip}\leftarrowfill@#1\crcr}}}%
\def\underleftrightarrow{\mathpalette\underleftrightarrow@}%
\def\underleftrightarrow@#1#2{\vtop{\ialign{##\crcr$\m@th
  \hfil#1#2\hfil$\crcr
 \noalign{\nointerlineskip}\leftrightarrowfill@#1\crcr}}}%
%%%%%%%%%%%%%%%%%%%%%

\def\qopnamewl@#1{\mathop{\operator@font#1}\nlimits@}
\let\nlimits@\displaylimits
\def\setboxz@h{\setbox\z@\hbox}

\def\varlim@#1#2{\mathop{\vtop{\ialign{##\crcr
 \hfil$#1\m@th\operator@font lim$\hfil\crcr
 \noalign{\nointerlineskip}#2#1\crcr
 \noalign{\nointerlineskip\kern-\ex@}\crcr}}}}

 \def\rightarrowfill@#1{\m@th\setboxz@h{$#1-$}\ht\z@\z@
  $#1\copy\z@\mkern-6mu\cleaders
  \hbox{$#1\mkern-2mu\box\z@\mkern-2mu$}\hfill
  \mkern-6mu\mathord\rightarrow$}
\def\leftarrowfill@#1{\m@th\setboxz@h{$#1-$}\ht\z@\z@
  $#1\mathord\leftarrow\mkern-6mu\cleaders
  \hbox{$#1\mkern-2mu\copy\z@\mkern-2mu$}\hfill
  \mkern-6mu\box\z@$}

\def\projlim{\qopnamewl@{proj\,lim}}
\def\injlim{\qopnamewl@{inj\,lim}}
\def\varinjlim{\mathpalette\varlim@\rightarrowfill@}
\def\varprojlim{\mathpalette\varlim@\leftarrowfill@}
\def\varliminf{\mathpalette\varliminf@{}}
\def\varliminf@#1{\mathop{\underline{\vrule\@depth.2\ex@\@width\z@
   \hbox{$#1\m@th\operator@font lim$}}}}
\def\varlimsup{\mathpalette\varlimsup@{}}
\def\varlimsup@#1{\mathop{\overline
  {\hbox{$#1\m@th\operator@font lim$}}}}

%
%Companion to stackrel
%
%
%
% These are AMS environments that will be defined to
% be verbatims if amstex has not actually been 
% loaded
%
%
\begingroup \catcode `|=0 \catcode `[= 1
\catcode`]=2 \catcode `\{=12 \catcode `\}=12
\catcode`\\=12 
|gdef|@alignverbatim#1\end{align}[#1|end[align]]
|gdef|@salignverbatim#1\end{align*}[#1|end[align*]]

|gdef|@alignatverbatim#1\end{alignat}[#1|end[alignat]]
|gdef|@salignatverbatim#1\end{alignat*}[#1|end[alignat*]]

|gdef|@xalignatverbatim#1\end{xalignat}[#1|end[xalignat]]
|gdef|@sxalignatverbatim#1\end{xalignat*}[#1|end[xalignat*]]

|gdef|@gatherverbatim#1\end{gather}[#1|end[gather]]
|gdef|@sgatherverbatim#1\end{gather*}[#1|end[gather*]]

|gdef|@gatherverbatim#1\end{gather}[#1|end[gather]]
|gdef|@sgatherverbatim#1\end{gather*}[#1|end[gather*]]

|gdef|@multilineverbatim#1\end{multiline}[#1|end[multiline]]
|gdef|@smultilineverbatim#1\end{multiline*}[#1|end[multiline*]]

|gdef|@arraxverbatim#1\end{arrax}[#1|end[arrax]]
|gdef|@sarraxverbatim#1\end{arrax*}[#1|end[arrax*]]

|gdef|@tabulaxverbatim#1\end{tabulax}[#1|end[tabulax]]
|gdef|@stabulaxverbatim#1\end{tabulax*}[#1|end[tabulax*]]

|endgroup

\def\align{\@verbatim \frenchspacing\@vobeyspaces \@alignverbatim
You are using the "align" environment in a style in which it is not defined.}

\@namedef{align*}{\@verbatim\@salignverbatim
You are using the "align*" environment in a style in which it is not defined.}
\expandafter\let\csname endalign*\endcsname =\endtrivlist

\def\alignat{\@verbatim \frenchspacing\@vobeyspaces \@alignatverbatim
You are using the "alignat" environment in a style in which it is not defined.}

\@namedef{alignat*}{\@verbatim\@salignatverbatim
You are using the "alignat*" environment in a style in which it is not defined.}
\expandafter\let\csname endalignat*\endcsname =\endtrivlist

\def\xalignat{\@verbatim \frenchspacing\@vobeyspaces \@xalignatverbatim
You are using the "xalignat" environment in a style in which it is not defined.}

\@namedef{xalignat*}{\@verbatim\@sxalignatverbatim
You are using the "xalignat*" environment in a style in which it is not defined.}
\expandafter\let\csname endxalignat*\endcsname =\endtrivlist

\def\gather{\@verbatim \frenchspacing\@vobeyspaces \@gatherverbatim
You are using the "gather" environment in a style in which it is not defined.}

\@namedef{gather*}{\@verbatim\@sgatherverbatim
You are using the "gather*" environment in a style in which it is not defined.}
\expandafter\let\csname endgather*\endcsname =\endtrivlist

\def\multiline{\@verbatim \frenchspacing\@vobeyspaces \@multilineverbatim
You are using the "multiline" environment in a style in which it is not defined.}

\@namedef{multiline*}{\@verbatim\@smultilineverbatim
You are using the "multiline*" environment in a style in which it is not defined.}
\expandafter\let\csname endmultiline*\endcsname =\endtrivlist

\def\arrax{\@verbatim \frenchspacing\@vobeyspaces \@arraxverbatim
You are using a type of "array" construct that is only allowed in AmS-LaTeX.}

\def\tabulax{\@verbatim \frenchspacing\@vobeyspaces \@tabulaxverbatim
You are using a type of "tabular" construct that is only allowed in AmS-LaTeX.}

\@namedef{arrax*}{\@verbatim\@sarraxverbatim
You are using a type of "array*" construct that is only allowed in AmS-LaTeX.}
\expandafter\let\csname endarrax*\endcsname =\endtrivlist

\@namedef{tabulax*}{\@verbatim\@stabulaxverbatim
You are using a type of "tabular*" construct that is only allowed in AmS-LaTeX.}
\expandafter\let\csname endtabulax*\endcsname =\endtrivlist

% macro to simulate ams tag construct

% This macro is a fix to the equation environment
 \def\endequation{%
     \ifmmode\ifinner % FLEQN hack
      \iftag@
        \addtocounter{equation}{-1} % undo the increment made in the begin part
        $\hfil
           \displaywidth\linewidth\@taggnum\egroup \endtrivlist
        \global\tag@false
        \global\@ignoretrue   
      \else
        $\hfil
           \displaywidth\linewidth\@eqnnum\egroup \endtrivlist
        \global\tag@false
        \global\@ignoretrue 
      \fi
     \else   
      \iftag@
        \addtocounter{equation}{-1} % undo the increment made in the begin part
        \eqno \hbox{\@taggnum}
        \global\tag@false%
        $$\global\@ignoretrue
      \else
        \eqno \hbox{\@eqnnum}% $$ BRACE MATCHING HACK
        $$\global\@ignoretrue
      \fi
     \fi\fi
 } 

 \newif\iftag@ \tag@false
 
 \def\TCItag{\@ifnextchar*{\@TCItagstar}{\@TCItag}}
 \def\@TCItag#1{%
     \global\tag@true
     \global\def\@taggnum{(#1)}}
 \def\@TCItagstar*#1{%
     \global\tag@true
     \global\def\@taggnum{#1}}

  \@ifundefined{tag}{
     \def\tag{\@ifnextchar*{\@tagstar}{\@tag}}
     \def\@tag#1{%
         \global\tag@true
         \global\def\@taggnum{(#1)}}
     \def\@tagstar*#1{%
         \global\tag@true
         \global\def\@taggnum{#1}}
  }{}
% Do not add anything to the end of this file.  
% The last section of the file is loaded only if 
% amstex has not been.

\makeatother

\begin{document}

\title{Random walk algorithm for the Dirichlet problem for parabolic
integro-differential equation}
\author{G. Deligiannidis\thanks{%
Department of Statistics, University of Oxford} \and S. Maurer\thanks{%
School of Mathematical Sciences, University of Nottingham, UK} \and M.V.
Tretyakov\thanks{%
School of Mathematical Sciences, University of Nottingham, UK;
Michael.Tretyakov@nottingham.ac.uk}}
\maketitle

\begin{abstract}
We consider stochastic differential equations driven by a general L\'evy
processes (SDEs) with infinite activity and the related, via the Feynman-Kac
formula, Dirichlet problem for parabolic integro-differential equation
(PIDE). We approximate the solution of PIDE using a numerical method for the
SDEs. The method is based on three ingredients: (i) we approximate small
jumps by a diffusion; (ii) we use restricted jump-adaptive time-stepping;
and (iii) between the jumps we exploit a weak Euler approximation. We prove
weak convergence of the considered algorithm and present an in-depth
analysis of how its error and computational cost depend on the jump activity
level. Results of some numerical experiments, including pricing of barrier
basket currency options, are presented.

\noindent \textbf{AMS 2000 subject classification. }Primary 65C30; secondary
60H10, 35R09, 60H35, 60J75.

\noindent \textbf{Keywords}. SDEs driven by L\'evy processes, jump
processes, integro-differential equations, Feynman-Kac formula, weak
approximation of stochastic differential equations.
\end{abstract}

\section{Introduction}

Stochastic differential equations driven by L\'evy processes (SDEs) have
become a very important modelling tool in finance, physics, and biology (see
e.g. \cite%
{ContTankov,van1995stochastic,barndorff2012levy,allen2010introduction}).
Successful use of SDEs relies on effective numerical methods. In this paper,
we are interested in weak-sense approximation of SDEs driven by general
L\'evy processes in which the noise has both the Wiener process and Poisson
processes components including the case of infinite jump activity.

Let $G$ be a bounded domain in $\mathbb{R}^{d}$, $Q=[t_{0},T)\times G$ be a
cylinder in $\mathbb{R}^{d+1},$ $\Gamma =\bar{Q}\setminus Q$ be the part of
the cylinder's boundary consisting of the upper base and lateral surface, $%
G^{c}=\mathbb{R}^{d} \setminus Q$ be the complement of $G$ and $%
Q^{c}:=(t_{0},T]\times G^{c}\cup \{T\}\times \bar{G}.$ Consider the
Dirichlet problem for the parabolic integro-differential equation (PIDE):%
\begin{equation}  \label{eq:problem}
\begin{split}
\frac{\partial u}{\partial t}+Lu+c(t,x)u+g(t,x)& =0, \quad (t,x)\in Q, \\
u(t,x)& =\varphi (t,x), \quad (t,x)\in Q^{c},
\end{split}%
\end{equation}%
where the integro-differential operator $L$ is of the form
\begin{gather}
Lu(t,x):=\frac{1}{2}\sum_{i,j=1}^{d}a^{ij}(t,x)\frac{\partial ^{2}u}{%
\partial x^{i}\partial x^{j}}(t,x)+\sum_{i=1}^{d}b^{i}(t,x)\frac{\partial u}{%
\partial x^{i}}(t,x)  \label{eq:operator} \\
\qquad \qquad +\int_{\mathbb{R}^{m}}\Big\{u\big(t,x+F(t,x)z\big)%
-u(t,x)-\langle F(t,x)z,\nabla u(t,x)\rangle \mathbf{I}_{|z|\leq 1}\Big\}\nu
(\mathrm{d}z);  \notag
\end{gather}%
$a(t,x)=\left( a^{ij}(t,x)\right) $ is a $d\times d$-matrix; $%
b(t,x)=(b^{1}(t,x),\ldots ,b^{d}(t,x))^{\top }$ is a $d$-dimensional vector;
$c(t,x),$ $g(t,x),$ and $\varphi (t,x)$ are scalar functions; $F(t,x)=\left(
F^{ij}(t,x)\right) $ is a $d\times m$-matrix; and $\nu (z),$ $z\in \mathbb{R}%
^{m},$ is a L\'evy measure such that $\int_{\mathbb{R}^{m}}(|z|^{2}\wedge
1)\nu (\mathrm{d}z)<\infty .$ We allow $\nu$ to be of infinite intensity,
i.e. we may have $\nu \big(B(0,r)\big)=\infty $ for some $r>0$, where as
usual for $x\in \mathbb{R}^d$ and $s>0$ we write $B(x,s)$ for the open ball
of radius $s$ centred at $x$.

The Feynman-Kac formula provides a probabilistic representations of the
solution $u(t,x)$ to (\ref{eq:problem}) in terms of a system of
L\'evy-driven SDEs (see Section~\ref{sec_pre}), which can be viewed as a
system of characteristics for this PIDE. A weak-sense approximation of the
SDEs together with the Monte Carlo technique gives us a numerical approach
to evaluating $u(t,x)$, which is especially effective in higher dimensions.

There has been a considerable amount of research on weak-sense numerical
methods for L\'evy-type SDEs of finite and infinite activity (see e.g. \cite%
{Mik88,Protter97,Liu2000,jacod2005approximate,rubenthaler2003numerical,
mordecki2008adaptive,Higa10,Platen2010,KOT13,Mik18} and references therein).
Our approach is most closely related to \cite{KOT13}. As in \cite%
{Asmus01,Higa10,KOT13}, we replace small jumps with an appropriate Brownian
motion, which makes the numerical solution of SDEs with infinite activity of
the L\'evy measure feasible in practice. There are three main differences
between our approach and that of \cite{KOT13}. \textit{First}, we use
restricted jump-adapted time-stepping while in \cite{KOT13} jump-adapted
time-stepping was used. Here by jump-adapted we mean that time
discretization points are located at jump times $\tau _{k}$ and between the
jumps the remaining diffusion process is effectively approximated \cite%
{Higa10,KOT13}. By restricted jump-adapted time-stepping, we understand the
following. We fix a time-discretization step $h>0$. If the jump time
increment $\delta $ for the next time step is less than $h,$ we set the time
increment $\theta =\delta ,$ otherwise $\theta =h,$ i.e., our time steps are
defined as $\theta =\delta \wedge h.$ We note that this is a different
time-stepping strategy to commonly used ones in the literature including the
finite-activity case (i.e., jump-diffusion). For example, in the finite
activity case it is common \cite{Liu2000, mordecki2008adaptive,Platen2010}
to simulate $\tau _{k}$ before the start of simulations and then superimpose
those random times on a grid with some constant or variable finite, small
time-step $h$. Our time-stepping approach is more natural for the problem
under consideration than both commonly used strategies; its benefits are
discussed in Section~\ref{sec:weak}, with the infinite activity case
discussed in more detail in Subsections~\ref{sec:iij} and~\ref%
{ssec:exa-singular}. Restricting $\delta $ by $h$ is beneficial for accuracy
when jumps are rare (e.g. in the jump-diffusion case) and it is also
beneficial for convergence rates (measured in the average number of steps)
in the case of $\alpha $-stable L\'evy measure with $\alpha \in (1,2)$ (see
Sections~\ref{sec:weak} and~\ref{sec:num}). \textit{Second}, in comparison
with \cite{KOT13} we explicitly show (singular) dependence of the numerical
integration error of our algorithm on the parameter $\epsilon $ which is the
cut-off for small jumps replaced by the Brownian motion. \textit{Third, }in
comparison with the literature we consider the Dirichlet problem for PIDEs,
though we also comment on the Cauchy case in Subsection~\ref{sec:Cau}, which
is novel with respect to the use of restricted time-stepping and dependence
of the algorithm's error on $\epsilon $.

The paper is organised as follows. In Section~\ref{sec_pre}, we write down a
probabilistic representation for the solution $u(t,x)\ $of (\ref{eq:problem}%
), we state assumptions used throughout the paper, and we consider the
approximation $u^{\epsilon }(t,x)$ that solves an auxiliary Dirichlet
problem corresponding to the system of characteristics with jumps cut-off by
$\epsilon$. In Section~\ref{sec:weak}, we introduce the numerical algorithm
which approximates $u^{\epsilon }(t,x).$ The algorithm uses the restricted
jump-adapted time-stepping and approximates the diffusion by a weak Euler
scheme. In this section we also obtain and discuss the weak-sense error
estimate for the algorithm. In Section~\ref{sec:num}, we illustrate our
theoretical findings by three numerical examples, including an application
of our algorithm to pricing an FX barrier basket option whose underlyings
follow an exponential L\'evy model.

\section{Preliminaries\label{sec_pre}}

%%%%%%%%%%%%%%%%%%%%%%%%%%%%%%%%%%%%%%%%%%Regularity
Let $(\Omega ,\mathcal{F},\left\{ \mathcal{F}_{t}\right\} _{t_{0}\leq t\leq
T},P)$ be a filtered probability space satisfying the usual hypotheses. The
operator $L$ defined in \eqref{eq:operator}, on an appropriate domain, is
the generator of the $d$-dimensional process $X_{t_{0},x}(t)$ given by
\begin{equation}
X_{t_{0},x}(t)=x+\int_{t_{0}}^{t}b(s,X(s-))\mathrm{d}s+\int_{t_{0}}^{t}%
\sigma (s,X(s-))\mathrm{d}w(s)+\int_{t_{0}}^{t}\int_{\mathbb{R}%
^{d}}F(s,X(s-))z\hat{N}(\mathrm{d}z,\mathrm{d}s),  \label{eq:generalsde}
\end{equation}%
where the $d\times d$ matrix $\sigma (s,x)$ is defined through $\sigma
(s,x)\sigma ^{\top }(s,x)=a(s,x);$ $w(t)=(w^{1}(t),\ldots ,w^{d}(t))^{\top }$
is a standard $d$-dimensional Wiener process; and $\hat{N}$ is a Poisson
random measure on $[0,\infty )\times \mathbb{R}^{m}$ with intensity measure $%
\nu (\mathrm{d}z)\times \mathrm{d}s$, $\int_{\mathbb{R}^{m}}(|z|^{2}\wedge
1)\nu (\mathrm{d}z)<\infty ,$ and compensated small jumps, i.e.,
\begin{equation*}
\hat{N}\left( [0,t]\times B\right) =\int_{[0,t]\times B}N(\mathrm{d}z,%
\mathrm{d}s)-t\nu (B\cap \{|z|\leq 1\}),\quad
\mbox{for all $t\geq 0$ and $B\in
\mathcal{B}\big(\mathbb{R}^m \big)$}.
\end{equation*}

\begin{remark}
Often \cite{Apple09,Protter97} a simpler model of the form
\begin{equation}
X(t)=x+\int_{t_{0}}^{t}F(s,X(s-))\mathrm{d}Z(s),  \label{eq:simplesde}
\end{equation}%
where $Z(t),$ $t\geq t_{0},$ is an $m$-dimensional L\'{e}vy process with the
characteristic exponent%
\begin{equation*}
\psi (\xi )=\mathrm{i}(\mu ,\xi )-\frac{1}{2}(\xi ,\sigma \xi
)+\int_{|z|\leq 1}\Big[\mathrm{e}^{\mathrm{i}(\xi ,z)}-1-\mathrm{i}(\xi ,z)%
\Big]\nu (\mathrm{d}z)+\int_{|z|>1}\Big[\mathrm{e}^{\mathrm{i}(\xi ,z)}-1%
\Big]\nu (\mathrm{d}z),
\end{equation*}%
is considered instead of the general SDEs \eqref{eq:generalsde}. The
equation (\ref{eq:simplesde}) is obtained as a special case of %
\eqref{eq:generalsde} by setting $b(t,x)=\mu F(t,x)$ and $\sigma
(t,x)=\sigma F(t,x)$.
\end{remark}

%In order to streamline the presentation and avoid lengthly technical discussions we will make the following assumption regarding the regularity of solutions to \ref{eq:problem}.
%
%\begin{assumption}\label{assumption2.1} The Dirichlet problem
%\eqref{eq:problem} admits a classical solution $u(\cdot,\cdot)\in
%C^{l,m}([t_{0},T]\times \mathbb{R}^{d})$ with some $l\geq 1$ and $m\geq 2$.
%\end{assumption}
%
%\begin{remark}
%Notice that the solution is defined on $(t_0,T]\times \mathbb{R}^d$. This is due to the fact that the operator $L$ is not local and the boundary data $\phi$ are defined on the complement $(t_0,T]\times G^c$ rather than just on the boundary, as it would be for $L$ a purely differential operator.
%\end{remark}

When the solution $u$ of \eqref{eq:problem} is regular enough, for example $%
u\in C^{1,2}\left( [t_{0},T]\times \mathbb{R}^{d}\right) $, it can be shown,
see e.g. \cite{Apple09}, that $u$ has the following probabilistic
representation
\begin{equation}
u(t,x)=\mathbb{E}\left[ \varphi \left( \tau _{t,x},X_{t,x}(\tau
_{t,x})\right) Y_{t,x,1}(\tau _{t,x})+Z_{t,x,1,0}(\tau _{t,x})\right] ,\ \
(t,x)\in Q,  \label{eq:fkac}
\end{equation}%
where $(X_{t,x}(s),Y_{t,x,y}(s),Z_{t,x,y,z}(s))$ for $s\geq t$, solves the
system of SDEs consisting of (\ref{eq:generalsde}) and
\begin{eqnarray}
dY &=&c(s,X(s-))Yds,\ \ Y_{t,x,y}(t)=y,  \label{fkacy} \\
dZ &=&g(s,X(s-))Yds,\ \ Z_{t,x,y,z}(t)=z,  \label{fkatz}
\end{eqnarray}%
and $\tau _{t,x}=\inf \{s\geq t:(s,X_{t,x}(s))\notin Q\}$ is the fist
exit-time of the space-time L\'{e}vy process $(s,X_{t,x}(s))$ from the
space-time cylinder $Q$.

If one can simulate trajectories of $%
\{(s,X_{t,x}(s),Y_{t,x,1}(s),Z_{t,x,1,0}(s));s\geq 0\}$ then the solution of
the Dirichlet problem for PIDE \eqref{eq:problem} can be estimated by
applying the Monte Carlo technique to \eqref{eq:fkac}. This approach however
is not generally implementable for L\'{e}vy measures of infinite intensity,
that is when $\nu \big(B(0,r)\big)=\infty $ for some $r>0$. The difficulty
arises from the presence of an infinite number of small jumps in any finite
time interval, and can be overcome by replacing these small jumps by an
appropriate diffusion exploiting the idea of the method developed in \cite%
{Higa10,Asmus01}, which we apply here. {Alternatively, the issue can be
overcome if one can simulate directly from the increments of L\'{e}vy
process. We will not discuss this case in this paper as we only assume that
one has access to the L\`{e}vy measure. }

\subsection{Approximation of small jumps by diffusion\label{sec_sjd}}

We will now consider the approximation of (\ref{eq:generalsde}) discussed
above, where small jumps are replaced by an appropriate diffusion. In the
case of the whole space (the Cauchy problem for a PIDE) such an
approximation was considered in \cite{Higa10,Asmus01}.

Let $\gamma _{\epsilon }$ be an $m$-dimensional vector with the components
\begin{equation}
\gamma _{\epsilon }^{i}=\int_{\epsilon \leq |z|\leq 1}z^{i}\nu (\mathrm{d}z);
\label{eq:gen21}
\end{equation}%
and $B_{\epsilon }$ is an $m\times m$ matrix with the components
\begin{equation}
B_{\epsilon }^{ij}=\int_{|z|<\epsilon }z^{i}z^{j}\nu (\mathrm{d}z),
\label{eq:gen22}
\end{equation}%
while $\beta _{\epsilon }$ be obtained from the formula $\beta _{\epsilon
}\beta _{\epsilon }^{\top }=B_{\epsilon }.$

\begin{remark}
In many practical situations (see e.g. \cite{ContTankov}), where the
dependence among the components of $X(t)$ introduced through the structure
of the SDEs is enough, we can allow the components of the driving Poisson
measure to be independent. This amounts to saying that $\nu $ is
concentrated on the axes, and as a result $B_{\epsilon }$ will be a diagonal
matrix.
\end{remark}

We shall consider the modified jump-diffusion $\tilde{X}_{t_{0},x}(t)=\tilde{%
X}_{t_{0},x}^{\epsilon }(t)$ defined as
\begin{align}
\tilde{X}_{t_{0},x}(t)& =x+\int_{t_{0}}^{t}\left[ b(s,\tilde{X}(s-))-F(s,%
\tilde{X}(s-))\gamma _{\epsilon }\right] \mathrm{d}s+\int_{t_{0}}^{t}\sigma
(s,\tilde{X}(s-))\mathrm{d}w(s)  \label{eq:gensde2} \\
& \quad +\int_{t_{0}}^{t}F(s,\tilde{X}(s-))\beta _{\epsilon }\mathrm{d}%
W(s)+\int_{t_{0}}^{t}\int_{|z|\geq \epsilon }F(s,\tilde{X}(s-))zN(\mathrm{d}%
z,\mathrm{d}s),  \notag
\end{align}%
where $W(t)$ is a standard $m$-dimensional Wiener process, independent of $N$
and $w$. We observe that, in comparison with \eqref{eq:generalsde}, in %
\eqref{eq:gensde2} jumps less than $\epsilon $ in magnitude are replaced by
the additional diffusion part. In this way, the new L\'{e}vy measure has
finite activity allowing us to simulate its events exactly, i.e. in a
practical way.

Consequently, we can approximate the solution of $u(t,x)$ the PIDE %
\eqref{eq:problem} by
\begin{equation}
u(t,x)\approx u_{\epsilon }(t,x):=\mathbb{E}\left[ \varphi \left( \tilde{\tau%
}_{t,x},\tilde{X}_{t,x}(\tilde{\tau}_{t,x})\right) \tilde{Y}_{t,x,1}(\tilde{%
\tau}_{t,x})+\tilde{Z}_{t,x,1,0}(\tilde{\tau}_{t,x})\right] ,\ \ (t,x)\in Q,
\label{eq:fkac2}
\end{equation}%
where $\tilde{\tau}_{t,x}=\inf \{s\geq t:(s,\tilde{X}_{t,x}(s))\notin Q\}$
is the fist exit time of the space-time L\'evy process $(s,\tilde{X}%
_{t,x}(s))$ from the space-time cylinder $Q$ and $\left( \tilde{X}_{t,x}(s),%
\tilde{Y}_{t,x,y}(s),\tilde{Z}_{t,x,y,z}(s)\right) _{s\geq 0}$ solves the
system of SDEs consisting of \eqref{eq:gensde2} along with
\begin{eqnarray}
d\tilde{Y} &=&c(s,\tilde{X}(s-))\tilde{Y}ds,\ \ \tilde{Y}_{t,x,y}(t)=y,
\label{eq:fkac2y} \\
d\tilde{Z} &=&g(s,\tilde{X}(s-))\tilde{Y}ds,\ \ \tilde{Z}_{t,x,y,z}(t)=z.
\label{eq:fkac2z}
\end{eqnarray}%
Since the new L\'evy measure has finite activity, we can derive a
constructive weak scheme for (\ref{eq:gensde2}), (\ref{eq:fkac2y})-(\ref%
{eq:fkac2z}) (see Section~\ref{sec:weak}). By using this method together
with the Monte Carlo technique, we will arrive at an implementable
approximation of $u_{\epsilon }(t,x)$ and hence of $u(t,x).$

We will next show that indeed $u_{\epsilon }$ defined in \eqref{eq:fkac2} is
a good approximation to the solution of \eqref{eq:problem}. Before
proceeding, we need to formulate appropriate assumptions.

\subsection{Assumptions}

\label{ssec:assumptions}

First, we make the following assumptions on the coefficients of the problem %
\eqref{eq:problem} which will guarantee, see e.g. \cite{Apple09}, that the
SDEs \eqref{eq:generalsde}, \eqref{fkacy}-\eqref{fkatz} and %
\eqref{eq:gensde2}, \eqref{eq:fkac2y}-\eqref{eq:fkac2z} have unique adapted,
c\`adl\`ag solutions with finite moments.

\begin{assumption}
\label{assumption3.1}(\textit{Lipschitz condition}) \textit{There exists a
constant} $K>0$ \textit{such that for all} $x_{1},$ $x_{2}\in \mathbb{R}^{d}$
\textit{and all} $t\in \lbrack t_{0},T]$,
\begin{multline}  \label{eq:lipschitz}
\big|b(t,x_{1})-b(t,x_{2})\big|^{2}+\big\|\sigma (t,x_{1})-\sigma (t,x_{2})%
\big\|^{2} +\left\vert c(t,x_{1})-c(t,x_{2})\right\vert ^{2}+\left\vert
g(t,x_{1})-g(t,x_{2})\right\vert ^{2} \\
+\int_{\mathbb{R}^{d}}\Vert F(t,x_{1})-F(t,x_{2})\Vert ^{2}|z|^{2}\nu (%
\mathrm{d}z)\leq K|x_{1}-x_{2}|^{2}.
\end{multline}
\end{assumption}

\begin{assumption}
\label{assumption3.2}(\textit{Growth condition}) \textit{There exists a
constant} $K>0$ \textit{such that for all} $x\in \mathbb{R}^{d}$ \textit{and
all} $t\in \lbrack t_{0},T]$,
\begin{align}
\big|b(t,x)\big|^{2}+\big\|\sigma (t,x)\big\|^{2}+\left\vert
g(t,x)\right\vert ^{2}+\int_{\mathbb{R}^{d}}\Vert F(t,x)\Vert ^{2}|z|^{2}\nu
(\mathrm{d}z)&\leq K(1+|x|)^{2},  \label{eq:growth} \\
|c(t,x)|&\leq K.  \label{eq:growth2}
\end{align}
\end{assumption}

In order to streamline the presentation and avoid lengthy technical
discussions (see Remark~\ref{rem:regu}), we will make the following
assumption regarding the regularity of solutions to (\ref{eq:problem}).

\begin{assumption}
\label{assumption2.1} \textit{The Dirichlet problem }\eqref{eq:problem}
\textit{admits a classical solution} $u(\cdot ,\cdot )\in
C^{l,m}([t_{0},T]\times \mathbb{R}^{d})$ \textit{with some }$l\geq 1$
\textit{and} $m\geq 2$.
\end{assumption}

In addition to the PIDE problem \eqref{eq:problem}, we also consider the
PIDE problem for $u^{\epsilon }$ from \eqref{eq:fkac2} \cite{Apple09}:%
\begin{gather}
\frac{\partial u^{\epsilon }}{\partial t}+L_{\epsilon }u^{\epsilon
}+c(t,x)u^{\epsilon }+g(t,x) =0, \,\,\, (t,x) \in Q,  \label{v1} \\
u^{\epsilon }(t,x) =\varphi (t,x), \,\,\, (t,x) \in Q^{c},  \notag
\end{gather}%
where
\begin{gather}
L_{\epsilon }v(t,x):=\frac{1}{2}\sum_{i,j=1}^{d}\left[ a^{ij}(t,x)+\left(
F(t,x)B_{\epsilon }(t,x)F^{\top }(t,x)\right) ^{ij}\right] \frac{\partial
^{2}v}{\partial x^{i}\partial x^{j}}(t,x)  \label{v2} \\
+\sum_{i=1}^{d}\Big(b^{i}(t,x)-\sum_{j=1}^{m}F^{ij}(t,x)\gamma _{\epsilon
}^{j}\Big)\frac{\partial v}{\partial x^{i}}(t,x)+\int_{|z|\geq \epsilon }%
\Big\{v\big(t,x+F(t,x)z\big)-v(t,x)\Big\}\nu (\mathrm{d}z).  \notag
\end{gather}

Again, for simplicity (but see Remark~\ref{rem:regu}), we impose the
following conditions on the solution $u_{\epsilon }$ of the above Dirichlet
problem.

\begin{assumption}
\label{assumption2.1'} \textit{The auxiliary Dirichlet problem} \eqref{v1}
\textit{admits a classical solution }$u^{\epsilon }(\cdot ,\cdot )\in
C^{l,m}([t_{0},T]\times \mathbb{R}^{d})$ \textit{with some} $l\geq 1$
\textit{and} $m\geq 2$.
\end{assumption}

Finally, we also require that $u^{\epsilon }$ and its derivatives do not
grow faster than a polynomial function at infinity.
%\begin{assumption}[Smoothness and boundedness]
%\label{assum:differentiability} There exist $K>0$ and $q\geq 1$ such that
%for all $x\in \mathbb{R}^{d}$, all $t\in \lbrack t_{0},T]$ and $\epsilon >0,$
%the solution $v(t,x)$ of the PIDE problem (\ref{v1})-(\ref{v2}) and its
%derivatives satisfy the following condition
%\begin{equation*}
%\Big|\frac{\partial ^{l+j}}{\partial t^{l}\partial x^{i_{1}}\cdots \partial
%x^{i_{j}}}v(t,x)\Big|\leq K(1+|x|^{q}),
%\end{equation*}%
%where $0\leq 2l+j\leq 4,\ \sum_{k=1}^{j}i_{k}=j,$ and $i_{k}$ are intergers
%from $0$ to $j$.
%\end{assumption}

\begin{assumption}[\textit{Smoothness and growth}]
\label{assum:differentiability} There exist constants $K>0$ and $q\geq 1$
such that for all $x\in \mathbb{R}^{d}$, all $t\in \lbrack t_{0},T]$ and $%
\epsilon >0$, the solution $u^{\epsilon }$ of the PIDE problem \eqref{v1}
and its derivatives satisfy
\begin{equation}
\Big|\frac{\partial ^{l+j}}{\partial t^{l}\partial x^{i_{1}}\cdots \partial
x^{i_{j}}}u^{\epsilon }(t,x)\Big|\leq K(1+|x|^{q}),  \label{eq:dergrowth}
\end{equation}%
where $0\leq 2l+j\leq 4,\ \sum_{k=1}^{j}i_{k}=j,$ and $i_{k}$ are integers
from $0$ to $j$.
\end{assumption}

\begin{remark}
\label{rem:regu} Sufficient conditions guaranteeing Assumptions~\ref%
{assumption2.1}, \ref{assumption2.1'} and ~\ref{assum:differentiability}
consist in sufficient smoothness of the coefficients, the boundary $\partial
G,$ and the function $\varphi $ and in appropriate compatibility of $\varphi
$ and $g$ (see e.g. \cite{GM92,JacobIII,MP05}).
\end{remark}

\subsection{Closeness of $u^{\protect\epsilon }(t,x)$ and $u(t,x)$}

\label{ssec:closeness_ue}

We now state and prove the theorem on closeness of $u^{\epsilon }(t,x)$ and $%
u(t,x)$. In what follows we use the same letters $K$ and $C$ for various
positive constants independent of $x,$ $t,$ and $\epsilon .$

\begin{theorem}
\label{Thm_eps} Let Assumptions~\ref{assumption3.1}, \ref{assumption3.2} and %
\ref{assumption2.1} hold, the latter with $l=1$ and $m=3$. Then for $0\leq
\epsilon <1$
\begin{equation}
|u^{\epsilon }(t,x)-u(t,x)|\leq K\int_{|z|\leq \epsilon }|z|^{3}\nu (\mathrm{%
d}z),\ \ (t,x)\in Q,  \label{thme1}
\end{equation}%
where $K>0$ does not depend on $t, x, \epsilon$.
\end{theorem}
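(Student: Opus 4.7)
The plan is to write $u^{\epsilon}(t,x) - u(t,x)$ as the expectation of a ``generator discrepancy'' $L_{\epsilon} u - L u$ integrated along trajectories of $\tilde{X}$, and then to bound this discrepancy pointwise by $\int_{|z|\leq \epsilon} |z|^3 \nu(\mathrm{d}z)$ via a Taylor expansion in which the compensating drift $-F\gamma_{\epsilon}$ and the added diffusion $F\beta_{\epsilon}$ are engineered to cancel the first two Taylor moments of the removed small jumps.

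First, I would apply the It\^o formula for c\`adl\`ag semimartingales to $M(s) := u(s, \tilde{X}_{t,x}(s))\,\tilde{Y}_{t,x,1}(s) + \tilde{Z}_{t,x,1,0}(s)$ between $s=t$ and $s=\tilde{\tau}_{t,x}$. Since the generator of $\tilde{X}$ is exactly $L_{\epsilon}$ defined in \eqref{v2}, and since $u$ satisfies $\partial_t u + L u + c u + g = 0$ on $Q$ by Assumption~\ref{assumption2.1}, the finite-variation part of $M$ collapses to $\tilde{Y}(s-)\,[L_{\epsilon} u - L u](s,\tilde{X}(s-))\,\mathrm{d}s$. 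After a standard localisation of the jump-integral martingales and taking expectations, the boundary condition $u=\varphi$ on $Q^c$ combined with the Feynman--Kac representation \eqref{eq:fkac2} for $u^{\epsilon}$ yields
\[
u^{\epsilon}(t,x) - u(t,x) \;=\; \mathbb{E}\int_t^{\tilde{\tau}_{t,x}} \tilde{Y}(s-)\,[L_{\epsilon} u - L u](s,\tilde{X}(s-))\,\mathrm{d}s.
\]

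Second, I would compute $L_{\epsilon} u - L u$ pointwise, splitting the integral $\int_{\mathbb{R}^m}$ appearing in $L$ into the three regions $\{|z|<\epsilon\}$, $\{\epsilon\leq |z|\leq 1\}$, $\{|z|>1\}$. On $\{|z|>1\}$ the jump integrals coincide; on $\{\epsilon\leq |z|\leq 1\}$ the leftover $\langle Fz,\nabla u\rangle$ term is exactly cancelled by $-\langle F\gamma_{\epsilon},\nabla u\rangle$ thanks to the definition \eqref{eq:gen21} of $\gamma_{\epsilon}$; and the definition \eqref{eq:gen22} of $B_{\epsilon}$ gives $\tfrac{1}{2}(F B_{\epsilon} F^\top)^{ij}\partial_{ij} u = \tfrac{1}{2}\int_{|z|<\epsilon}(Fz)^\top D^2 u\,(Fz)\,\nu(\mathrm{d}z)$, which absorbs the second-order Taylor term of $u(x+Fz)$ on $\{|z|<\epsilon\}$. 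Every term except the third-order Taylor remainder cancels, leaving
\[
L_{\epsilon} u - L u \;=\; -\int_{|z|<\epsilon}\Bigl[u(x+Fz) - u(x) - \langle Fz,\nabla u\rangle - \tfrac{1}{2}(Fz)^\top D^2 u\,(Fz)\Bigr]\nu(\mathrm{d}z),
\]
and Taylor's theorem bounds the bracketed integrand by $C\,\|D^3 u\|_{\infty}\,|Fz|^3$.

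Finally, I would close the estimate using that $\tilde{X}(s-)\in\bar G$ for $s\leq\tilde{\tau}_{t,x}$ and $G$ is bounded: Assumption~\ref{assumption2.1} with $m=3$ ensures $D^3 u$ is continuous, hence bounded on the compact image $\{x+F(s,x)z: (s,x)\in[t_0,T]\times\bar G,\ |z|\leq 1\}$ (note $F$ is continuous on $\bar G$ by Assumption~\ref{assumption3.1}), while Assumption~\ref{assumption3.2} gives $|\tilde{Y}(s-)|\leq e^{K(T-t_0)}$ and of course $\tilde{\tau}_{t,x}-t\leq T-t_0$. Plugging these into the representation above yields \eqref{thme1}.

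The main obstacle is the bookkeeping in the second step: one must split the $\nu$-integral so that the quadratic cancellation provided by $B_{\epsilon}$ matches the $\{|z|<\epsilon\}$ piece of the $L$-integral and the linear cancellation provided by $\gamma_{\epsilon}$ matches the $\{\epsilon\leq |z|\leq 1\}$ piece, without ever needing $\int_{|z|>1}|z|\,\nu(\mathrm{d}z)$ or similar quantities that are not controlled by the L\'evy integrability hypothesis.
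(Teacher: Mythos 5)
Your proposal is correct and follows essentially the same route as the paper: the paper likewise applies It\^o's formula to $u(s,\tilde X(s))\tilde Y(s)+\tilde Z(s)$, uses $u=\varphi$ on $Q^c$ together with \eqref{eq:fkac2} to identify the left-hand side as $u^{\epsilon}-u$, cancels the first- and second-order terms via the definitions \eqref{eq:gen21} of $\gamma_{\epsilon}$ and \eqref{eq:gen22} of $B_{\epsilon}$, and bounds the surviving third-order Taylor remainder using boundedness of $F$ and $D^3u$ on the compact closure of $G$ and boundedness of $\mathbb{E}\tilde Y\chi_{\tilde\tau>s}$. The only cosmetic difference is that you package the drift discrepancy as $L_{\epsilon}u-Lu$ up front, whereas the paper carries the individual terms through the It\^o expansion before cancelling them.
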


\begin{proof}
We have $\big(\tilde{\tau}_{t,x},\tilde{X}_{t,x}(\tilde{\tau}_{t,x})\big)\in
Q^{c}$ and $\varphi \big(\tilde{\tau}_{t,x},\tilde{X}_{t,x}(\tilde{\tau}%
_{t,x})\big)=u\big(\tilde{\tau}_{t,x},\tilde{X}_{t,x}(\tilde{\tau}_{t,x})%
\big),$ and
\begin{equation}
u^{\epsilon }(t,x)-u(t,x)=\mathbb{E}\left[ u\big(\tilde{\tau}_{t,x},\tilde{X}%
_{t,x}(\tilde{\tau}_{t,x})\big)\tilde{Y}_{t,x,1}(\tilde{\tau}_{t,x})+\tilde{Z%
}_{t,x,1,0}(\tilde{\tau}_{t,x})\right] -u(t,x).  \label{thme2}
\end{equation}%
By Ito's formula, we get
\begin{align}
\lefteqn{u(s,\tilde{X}_{t,x}(s))\tilde{Y}_{t,x,1}(s)+\tilde{Z}_{t,x,1,0}(s)}
\label{thme3} \\
& =u(t,x)+\int_{t}^{s}\tilde{Y}_{t,x,1}(s^{\prime })\bigg[\frac{\partial }{%
\partial t}u(s^{\prime },\tilde{X}_{t,x}(s^{\prime }-))+\frac{1}{2}%
\sum_{i,j=1}^{d}a^{ij}(s^{\prime },\tilde{X}_{t,x}(s^{\prime }-))\frac{%
\partial ^{2}u}{\partial x^{i}\partial x^{j}}(s^{\prime },\tilde{X}%
_{t,x}(s^{\prime }-))  \notag \\
& +\langle b(s^{\prime },\tilde{X}_{t,x}(s^{\prime }-)),\nabla u(s^{\prime },%
\tilde{X}_{t,x}(s^{\prime }-))\rangle -\langle F(s,\tilde{X}(s-))\gamma
_{\epsilon },\nabla u(s^{\prime },\tilde{X}_{t,x}(s^{\prime }-))\rangle
\notag \\
& +c(s,\tilde{X}_{t,x}(s^{\prime }-))u(s^{\prime },\tilde{X}_{t,x}(s^{\prime
}-))+g(s^{\prime },\tilde{X}_{t,x}(s^{\prime }-))\bigg]\mathrm{d} s^{\prime}
\notag \\
& +\frac{1}{2}\int_{t}^{s}\tilde{Y}_{t,x,1}(s^{\prime
})\sum_{i,j=1}^{d}\left( F(s^{\prime },\tilde{X}_{t,x}(s^{\prime
}-))B_{\epsilon }F^{\top }(s^{\prime },\tilde{X}_{t,x}(s^{\prime }-))\right)
^{{ij}}\frac{\partial ^{2}u}{\partial x^{i}\partial x^{j}}(s^{\prime },%
\tilde{X}_{t,x}(s^{\prime }-))\mathrm{d} s^{\prime}  \notag \\
& +\int_{t}^{s}\tilde{Y}_{t,x,1}(s^{\prime })\left[ \sigma (s^{\prime },%
\tilde{X}(s^{\prime }-))\nabla u(s^{\prime },\tilde{X}(s^{\prime }-))\right]
^{\top }\mathrm{d}w(s^{\prime })  \notag \\
& +\int_{t}^{s}\tilde{Y}_{t,x,1}(s^{\prime })\left[ F(s^{\prime },\tilde{X}%
(s^{\prime }-))\beta _{\epsilon }\nabla u(s^{\prime },\tilde{X}(s^{\prime
}-))\right] ^{\top }\mathrm{d}W(s^{\prime })  \notag \\
& +\int_{t}^{s}\int_{|z|\geq \epsilon }\tilde{Y}_{t,x,1}(s^{\prime })\left[
u(s^{\prime },\tilde{X}(s-)+F(s^{\prime },\tilde{X}(s^{\prime
}-))z)-u(s^{\prime },\tilde{X}(s^{\prime }))\right] N(\mathrm{d}z,\mathrm{d}%
s^{\prime }).  \notag
\end{align}%
Since $u(t,x)$ solves (\ref{eq:problem}) and recalling (\ref{eq:gen21}), we
obtain from (\ref{thme3}):%
\begin{align}
\lefteqn{u\left( s,\tilde{X}_{t,x}(s)\right) \tilde{Y}_{t,x,1}(s)+\tilde{Z}%
_{t,x,1,0}(s)-u(t,x)}  \label{thme4} \\
& =-\int_{t}^{s}\tilde{Y}_{t,x,1}(s^{\prime })[\langle F(s,\tilde{X}%
(s-))\gamma _{\epsilon },\nabla u(s^{\prime },\tilde{X}_{t,x}(s^{\prime
}-))\rangle  \notag \\
& +\int_{\mathbb{R}^{m}}\{u(s^{\prime },\tilde{X}_{t,x}(s^{\prime
}-)+F(s^{\prime },\tilde{X}_{t,x}(s^{\prime }-))z)-u(s^{\prime },\tilde{X}%
_{t,x}(s^{\prime }-))  \notag \\
& -\langle F(s^{\prime },\tilde{X}_{t,x}(s^{\prime }-))z,\nabla u(s^{\prime
},\tilde{X}_{t,x}(s^{\prime }-))\rangle \mathds{1}_{|z|\leq 1}\}\nu (\mathrm{%
d}z)]\mathrm{d} s^{\prime}  \notag \\
& +\frac{1}{2}\int_{t}^{s}\tilde{Y}_{t,x,1}(s^{\prime
})\sum_{i,j=1}^{d}\left( F(s^{\prime },\tilde{X}_{t,x}(s^{\prime
}-))B_{\epsilon }F^{\top }(s^{\prime },\tilde{X}_{t,x}(s^{\prime }-))\right)
^{ij}\frac{\partial ^{2}u}{\partial x^{i}\partial x^{j}}(s^{\prime },\tilde{X%
}_{t,x}(s^{\prime }-))\mathrm{d} s^{\prime}  \notag \\
& +\int_{t}^{s}\tilde{Y}_{t,x,y}(s^{\prime })\left[ \sigma (s^{\prime },%
\tilde{X}(s^{\prime }-))\nabla u(s^{\prime },\tilde{X}(s^{\prime }-))\right]
^{\top }\mathrm{d}w(s^{\prime })  \notag \\
& +\int_{t}^{s}\tilde{Y}_{t,x,y}(s^{\prime })\left[ F(s^{\prime },\tilde{X}%
(s^{\prime }-))\beta _{\epsilon }\nabla u(s^{\prime },\tilde{X}(s^{\prime
}-))\right] ^{\top }\mathrm{d}W(s^{\prime })  \notag \\
& +\int_{t}^{s}\int_{|z|\geq \epsilon }\tilde{Y}_{t,x,1}(s^{\prime
})[u(s^{\prime },\tilde{X}(s-)+F(s^{\prime },\tilde{X}(s^{\prime
}-))z)-u(s^{\prime },\tilde{X}(s^{\prime }))]N(\mathrm{d}z,\mathrm{d}%
s^{\prime })  \notag \\
& =\frac{1}{2}\int_{t}^{s}\tilde{Y}_{t,x,1}(s^{\prime
})\sum_{i,j=1}^{d}\left( F(s^{\prime },\tilde{X}_{t,x}(s^{\prime
}-))B_{\epsilon }F^{\top }(s^{\prime },\tilde{X}_{t,x}(s^{\prime }-))\right)
^{ij}\frac{\partial ^{2}u}{\partial x^{i}\partial x^{j}}(s^{\prime },\tilde{X%
}_{t,x}(s^{\prime }-))\mathrm{d} s^{\prime}  \notag \\
& -\int_{t}^{s}\int_{|z|<\epsilon }\tilde{Y}_{t,x,1}(s^{\prime
})[u(s^{\prime },\tilde{X}_{t,x}(s^{\prime }-)+F(s^{\prime },\tilde{X}%
_{t,x}(s^{\prime }-))z)-u(s^{\prime },\tilde{X}_{t,x}(s^{\prime }-))  \notag
\\
& -\langle F(s^{\prime },\tilde{X}_{t,x}(s^{\prime }-))z,\nabla u(^{\prime },%
\tilde{X}_{t,x}(s^{\prime }-))\rangle ]\nu (\mathrm{d}z)\mathrm{d} s^{\prime}
\notag \\
& +\int_{t}^{s}\tilde{Y}_{t,x,1}(s^{\prime })\left[ \sigma (s^{\prime },%
\tilde{X}(s^{\prime }-))\nabla u(s^{\prime },\tilde{X}(s^{\prime }-))\right]
^{\top }\mathrm{d}w(s^{\prime })  \notag \\
& +\int_{t}^{s}\tilde{Y}_{t,x,1}(s^{\prime })\left[ F(s^{\prime },\tilde{X}%
(s^{\prime }-))\beta _{\epsilon }\nabla u(s^{\prime },\tilde{X}(s^{\prime
}-))\right] ^{\top }\mathrm{d}W(s^{\prime })  \notag \\
& +\int_{t}^{s}\int_{|z|\geq \epsilon }\tilde{Y}_{t,x,1}(s^{\prime })\left[
u(s^{\prime },\tilde{X}(s-)+F(s^{\prime },\tilde{X}(s^{\prime
}-))z)-u(s^{\prime },\tilde{X}(s^{\prime }))\right]  \notag \\
& \qquad \times (N(\mathrm{d}z,\mathrm{d}s^{\prime })-\nu (\mathrm{d}z)%
\mathrm{d}s^{\prime }).
\end{align}

Replacing $s$ with the stopping time $\tilde{\tau}_{t,x}$ in (\ref{thme4})
(cf. (\ref{thme2})), taking expectations of the resulting left- and
right-hand sides of (\ref{thme4}) and using the martingale property, we
arrive at
\begin{align}
\lefteqn{\mathbb{E}\left[ u\left( \tilde{\tau}_{t,x},\tilde{X}_{t,x}(\tilde{%
\tau}_{t,x})\right) \tilde{Y}_{t,x,1}(\tilde{\tau}_{t,x})+\tilde{Z}%
_{t,x,1,0}(\tilde{\tau}_{t,x})\right] -u(t,x)}  \label{thme5} \\
& =\mathbb{E}\int_{t}^{\tilde{\tau}_{t,x}}\tilde{Y}_{t,x,1}(s) \bigg[ \frac{1%
}{2}\sum_{i,j=1}^{d} \left( F(s,\tilde{X}_{t,x}(s-))B_{\epsilon }F^{\top }(s,%
\tilde{X}_{t,x}(s-))\right) ^{ij} \frac{\partial ^{2}u}{\partial
x^{i}\partial x^{j}}(s,\tilde{X}_{t,x}(s-))  \notag \\
& -\int_{|z|<\epsilon } \Big(u(s,\tilde{X}_{t,x}(s-)+F(s,\tilde{X}%
_{t,x}(s-))z)-u(s,\tilde{X}_{t,x}(s-))  \notag \\
& -\langle F(s,\tilde{X}_{t,x}(s-))z,\nabla u(s,\tilde{X}_{t,x}(s-))\rangle %
\Big) \nu (\mathrm{d}z)\bigg]\mathrm{d} s.  \notag
\end{align}%
By Taylor's expansion, we get
\begin{align}
\lefteqn{u(s,\tilde{X}_{t,x}(s-)+F(s,\tilde{X}_{t,x}(s-))z)-u(s,\tilde{X}%
_{t,x}(s-))}  \label{thme6} \\
& =\langle F(s,\tilde{X}_{t,x}(s-))z,\nabla u(s,\tilde{X}_{t,x}(s-))\rangle
\notag \\
& +\frac{1}{2}\sum_{i,j=1}^{d}\left( F(s,\tilde{X}_{t,x}(s-))z)^{i}(F(s,%
\tilde{X}_{t,x}(s-))z\right) ^{j}\frac{\partial ^{2}u}{\partial
x^{i}\partial x^{j}}(s,\tilde{X}_{t,x}(s-))  \notag \\
& +\frac{1}{6}\sum_{i,j,k=1}^{d}\left( F(s,\tilde{X}_{t,x}(s-))z)^{i}(F(s,%
\tilde{X}_{t,x}(s-))z\right) ^{j}(F(s,\tilde{X}_{t,x}(s-))z)^{k}  \notag \\
& \times \frac{\partial ^{3}u}{\partial x^{i}\partial x^{j}\partial x^{k}}(s,%
\tilde{X}_{t,x}(s-)+\theta F(s,\tilde{X}_{t,x}(s-))z),  \notag
\end{align}%
where $\theta \in \lbrack 0,1].$ Recalling (\ref{eq:gen22}), we obtain from (%
\ref{thme5})-(\ref{thme6}):
\begin{align}
\lefteqn{\mathbb{E}\left[ u\left( \tilde{\tau}_{t,x},\tilde{X}_{t,x}(\tilde{%
\tau}_{t,x})\right) \tilde{Y}_{t,x,1}(\tilde{\tau}_{t,x})+\tilde{Z}%
_{t,x,1,0}(\tilde{\tau}_{t,x})\right] -u(t,x)}  \label{thme7} \\
& =-\frac{1}{6}\mathbb{E}\int_{t}^{\tilde{\tau}_{t,x}}\tilde{Y}%
_{t,x,1}(s)\int_{|z|<\epsilon }\sum_{i,j,k=1}^{d}\left( F(s,\tilde{X}%
_{t,x}(s-))z)^{i}(F(s,\tilde{X}_{t,x}(s-))z\right) ^{j}  \notag \\
& \times (F(s,\tilde{X}_{t,x}(s-))z)^{k}\frac{\partial ^{3}u}{\partial
x^{i}\partial x^{j}\partial x^{k}}(s,\tilde{X}_{t,x}(s-)+\theta F(s,\tilde{X}%
_{t,x}(s-))z)\nu (\mathrm{d}z)\mathrm{d} s.  \notag
\end{align}%
By definition of $\tilde{\tau}_{t,x}$, $\tilde{X}_{t,x}(s-)\in G$ for $s\leq
\tilde{\tau}_{t,x},$ then we have
\begin{eqnarray}
\left\vert F(s,\tilde{X}_{t,x}(s-))\right\vert &\leq &\max_{t_{0}\leq s\leq
T,\ x\in \bar{G}}|F(s,x)|\leq K,  \label{thme8} \\
\left\vert \tilde{X}_{t,x}(s-)+\theta F(s,\tilde{X}_{t,x}(s-))z\right\vert
&\leq &\max_{x\in \bar{G}}|x|+\epsilon \max_{t_{0}\leq s\leq T,\ x\in \bar{G}%
}|F(s,x)|\leq K,  \notag
\end{eqnarray}%
where $K>0$ does not depend on $\epsilon, t, x, s$, noting that $%
|z|<\epsilon $. Using Assumption~\ref{assumption2.1}, (\ref{thme7})-(\ref%
{thme8})
\begin{eqnarray}
&&\left\vert \mathbb{E}\left[ u\left( \tilde{\tau}_{t,x},\tilde{X}_{t,x}(%
\tilde{\tau}_{t,x})\right) \tilde{Y}_{t,x,1}(\tilde{\tau}_{t,x})+\tilde{Z}%
_{t,x,1,0}(\tilde{\tau}_{t,x})\right] -u(s,x)\right\vert  \label{thme9} \\
&\leq &K\int_{t_{0}}^{T}\mathbb{E}\tilde{Y}_{t,x,1}(s)\chi _{\tilde{\tau}%
_{t,x}>s}\mathrm{d} s\cdot \int_{|z|<\epsilon }|z|^{3}\nu (\mathrm{d}z).
\notag
\end{eqnarray}%
Since $c\big(s,\tilde{X}_{t,x}(s)\big)$ is bounded on the set $\{\tilde{\tau}%
_{t,x}>s\},$ $\mathbb{E}\tilde{Y}_{t,x,1}(s)\chi _{\tilde{\tau}_{t,x}>s}$ is
bounded which together with (\ref{thme9}) implies (\ref{thme1}).
\end{proof}

\begin{example}[Tempered $\protect\alpha $-stable Process]
\label{ex:alpha-stable} For $\alpha \in (0,2)$ consider an $\alpha $-stable
process with L\'evy measure given by $\nu (\mathrm{d}z)=z^{-1-\alpha }%
\mathrm{d}z$. Then
\begin{equation*}
\int_{|z|\leq \epsilon }|z|^{3}\nu (\mathrm{d}y)=\frac{\epsilon ^{3-\alpha }%
}{3-\alpha }.
\end{equation*}%
Similarly, for a tempered stable distribution which has L\'evy measure given
by
\begin{equation*}
\nu (\mathrm{d}z)=\Big(\frac{C_{+}\mathrm{e}^{-\lambda _{+}z}}{z^{1+\alpha }}%
\mathbf{1}_{z>0}+\frac{C_{-}\mathrm{e}^{-\lambda _{-}z}}{z^{1+\alpha }}%
\mathbf{1}_{z<0}\Big)\mathrm{d}z,
\end{equation*}%
for $\alpha \in (0,2)$ and $C_{+},$ $C_{-},$ $\lambda _{+},$ $\lambda _{-}>0$
we find that the error from approximating the small jumps by diffusion as in
Theorem~\ref{Thm_eps}\ is of the order $O(\epsilon ^{3-\alpha })$.
\end{example}

\section{Weak approximation of jump-diffusions in bounded domains}

\label{sec:weak}

In this section we propose and study a numerical algorithm which weakly
approximates the solutions of the jump-diffusion (\ref{eq:gensde2}), (\ref%
{eq:fkac2y})-(\ref{eq:fkac2z}) with finite intensity of jumps in a bounded
domain, i.e., approximates $u^{\epsilon }(t,x)$ from (\ref{eq:fkac2}). In
Section~\ref{sec:alg} we formulate the algorithm based on a simplest random
walk. We analyse the one-step error of the algorithm in Section~\ref{sec:one}
and the global error in Section~\ref{sec:glob}. In Section~\ref{sec:Cau} we
comment on how the global error can be estimated in the Cauchy case. In
Section~\ref{sec:iij} we combine the convergence result of Section~\ref%
{sec:glob} with Theorem~\ref{Thm_eps} to get error estimates in the case of
infinite activity of jumps.

\subsection{Algorithm\label{sec:alg}}

Let us describe an algorithm for simulating a Markov chain that approximates
a trajectory of (\ref{eq:gensde2}), (\ref{eq:fkac2y})-(\ref{eq:fkac2z}). In
what follows we assume that we can exactly sample increments $\delta $
between jump times with the intensity
\begin{equation}
\lambda _{\epsilon }:=\int_{|z|>\epsilon }\nu (\mathrm{d}z)  \label{eq:lame}
\end{equation}%
and jump sizes $J_{\epsilon }$ are distributed according to the density
\begin{equation}
\rho _{\epsilon }(z):=\frac{\nu (z)\mathbf{I}_{|z|>\epsilon }}{\lambda
_{\epsilon }}.  \label{eq:rhoe}
\end{equation}

\begin{remark}
There are known methods for simulating jump times and sizes for many
standard distributions. In general, if there exists an explicit expression
for the jump size density, one can construct a rejection method to sample
jump sizes. An overview with regard to simulation of jump times and sizes
can be found in~\cite{ContTankov,devroye:1986}.
\end{remark}

In what follows we also require the following to hold.

\begin{assumption}[Moments of $J$]
\label{assum:moments-jumps} \textit{There exists a constant} $K>0$ \textit{%
independent of} $\epsilon $ \textit{such that}
\begin{equation*}
\mathbb{E}|J_{\epsilon }|^{p}\equiv \frac{1}{\lambda _{\epsilon }}%
\int_{|z|>\epsilon }|z|^{p}\nu (dz)\leq K
\end{equation*}%
\textit{for sufficiently large} $p\geq 2.$
\end{assumption}

We also note that
\begin{equation}
\frac{\gamma _{\epsilon }^{2}}{\lambda _{\epsilon }}\leq K,  \label{eq:gala}
\end{equation}%
where $K>0$ is a constant independent of $\varepsilon .$

We now describe the algorithm. Fix a time-discretization step $h>0$ and
suppose the current position of the chain is $(t,x,y,z)$. If the jump time
increment $\delta <h$, we set $\theta =\delta $, otherwise $\theta =h$, i.e.
$\theta =\delta \wedge h$.

In the case $\theta =h$, we apply the weak explicit Euler approximation with
the simplest simulation of noise to the system (\ref{eq:gensde2}), (\ref%
{eq:fkac2y})-(\ref{eq:fkac2z}) with no jumps:
\begin{eqnarray}
\tilde{X}_{t,x}(t+\theta ) &\approx &X=x+\theta \cdot \left(
b(t,x)-F(t,x)\gamma _{\epsilon }\right) +\sqrt{\theta }\cdot \left( \sigma
(t,x)\,\xi \,+F(t,x)\beta _{\epsilon }\ \eta \right) ,\ \ \ \ \
\label{Hc01} \\
\tilde{Y}_{t,x,y}(t+\theta ) &\approx &Y=y+\theta \cdot c(t,x)\,y\,,
\label{Hc02} \\
\tilde{Z}_{t,x,y,z}(t+\theta ) &\approx &Z=z+\theta \cdot g(t,x)\,y\,,
\label{Hc03}
\end{eqnarray}%
where $\xi =(\xi ^{1},\ldots ,\xi ^{d})^{\intercal }$, $\eta =(\eta
^{1},\ldots ,\eta ^{m})^{\intercal }$, with $\xi ^{1}, \dots, \xi^{d}$ and $%
\eta ^{1}, \dots, \eta^{m}$ mutually independent random variables, taking
the values $\pm 1$ with equal probability. In the case of $\theta <h$, we
replace (\ref{Hc01}) by the following explicit Euler approximation
\begin{eqnarray}
\tilde{X}_{t,x}(t+\theta ) &\approx &X=x+\theta \cdot \left(
b(t,x)-F(t,x)\gamma _{\epsilon }\right) +\sqrt{\theta }\cdot \left( \sigma
(t,x)\,\xi \,+F(t,x)\beta _{\epsilon }\ \eta \right)  \label{Hc04} \\
&&+F(t,x)J_{\epsilon }.  \notag
\end{eqnarray}

Let $(t_{0},x_{0})\in Q$. We aim to find the value $u^{\epsilon
}(t_{0},x_{0})$, where $u^{\epsilon }(t,x)$\ solves the problem (\ref{v1}).
Introduce a discretization of the interval $\left[ t_{0},T\right] $, for
example the equidistant one:
\begin{equation*}
h:=(T-t_{0})/L.
\end{equation*}

To approximate the solution of the system (\ref{eq:gensde2}), we construct a
Markov chain $(\vartheta _{k},X_{k},Y_{k},Z_{k})$ which stops at a random
step $\varkappa $ when $(\vartheta _{k},X_{k})$ exits the domain $Q.$ The
algorithm is formulated as Algorithm~\ref{Hca01} below.

\begin{algorithm}[htb]
\caption{Algorithm for \eqref{eq:gensde2},
\eqref{eq:fkac2y}-\eqref{eq:fkac2z}.}\label{Hca01}  \hspace*{%
\algorithmicindent}\textbf{Output:} $\bar{\vartheta}_{\varkappa},
X_\varkappa, Y_\varkappa, Z_\varkappa$

\begin{algorithmic}[1]
\State\noindent \textbf{Initialize:} $\vartheta
_{0}=t_{0},\;X_{0}=x_{0},\;Y_{0}=1,\;Z_{0}=0,\;k=0.$ \State\textbf{Simulate:}
$\xi _{k}$ and $\eta _{k}$ with i.i.d.\ components taking values $\pm 1$ with
probability $1/2$ and independently $I_{k}\sim \mathrm{Bernoulli}\big(%
1-e^{-\lambda _{\epsilon }h}\big)$.\label{alg:step1} \If{$I_{k}=0,$} \State%
\textbf{Set:} $\theta _{k}=h$ \State\textbf{Evaluate:} $X_{k+1}$, $Y_{k+1}$,
$Z_{k+1}$ according to $(\ref{Hc01})-(\ref{Hc03})$ with $t=\vartheta _{k}$, $%
\theta =\theta _{k},$ $\xi =\xi _{k},$ $\eta =\eta _{k},$ $x=X_{k}$, $y=Y_{k}
$, $z=Z_{k}$. \Else \State\textbf{Sample:} $\delta _{k}$ according to the
density $\dfrac{\lambda _{\epsilon }e^{-\lambda _{\epsilon }x}}{%
1-e^{-\lambda _{\epsilon }h}}$ with finite support $[0,h].$ \State \textbf{Set:} $%
\theta _{k}=\delta _{k}$ \State\textbf{Sample:} jump size $J_{\epsilon ,k}$
according to the density $(\ref{eq:rhoe})$. \State\textbf{Evaluate:} $X_{k+1}
$, $Y_{k+1}$ and $Z_{k+1}$ according to $(\ref{Hc04})$, $(\ref{Hc02})$, $(\ref{Hc03})$
 with $t=\vartheta _{k}$, $\theta =\theta _{k},$ $\xi =\xi
_{k},$ $\eta =\eta _{k},$ $J_{\epsilon }=J_{\epsilon ,k}$, $x=X_{k}$, $%
y=Y_{k}$, $z=Z_{k}$.\EndIf \State\textbf{Set:} $\vartheta _{k+1}=\vartheta
_{k}+\theta _{k}.$ \If{$\vartheta _{k+1}\geq T$ or $X_{k+1}\notin G$} \State%
\textbf{Set:} $X_{\varkappa }=X_{k+1},$ $Y_{\varkappa }=X_{k+1},$ $%
Z_{\varkappa }=Z_{k+1},$ $\varkappa =k+1$ \If{$\vartheta _{k+1}<T$}%
\thinspace\ \textbf{Set:} $\bar{\vartheta}_{\varkappa }=\vartheta _{k+1}$ %
\Else\thinspace\ \textbf{Set:} $\bar{\vartheta}_{\varkappa }=T$ \EndIf \State%
\textbf{STOP} \Else\State\textbf{Set:} $k=k+1$ and GOTO \ref{alg:step1}. %
\EndIf
\end{algorithmic}
\end{algorithm}

\begin{remark}
We note \cite{MT03,MT04} that in the diffusion case (i.e., when there is no
jump component in the noise which drives SDEs) solving Dirichlet problems
for parabolic or elliptic PDEs requires to complement a random walk inside
the domain $G$ with a special approximation near the boundary $\partial G$.
In contrast, in the case of Dirichlet problems for PIDEs we do not need a
special construction near the boundary since the boundary condition is
defined on the whole complement $G^{c}.$ Here, when the chain $X_{k}$ exits $%
G,$ we know the exact value of the solution $u(\bar{\vartheta}_{\varkappa
},X_{\varkappa })=\varphi (\bar{\vartheta}_{\varkappa },X_{\varkappa })$ at
the exit point $(\bar{\vartheta}_{\varkappa },X_{\varkappa })$, while in the
diffusion case when a chain exits $G,$ we do not know the exact value of the
solution at the exit point and need an approximation. Due to this fact,
Algorithm~\ref{Hca01} is somewhat simpler than algorithms for Dirichlet
problems for parabolic or elliptic PDEs (cf. \cite{MT03,MT04} and references
therein).
\end{remark}

\subsection{One-step error\label{sec:one}}

In this section we consider the one-step error of Algorithm~\ref{Hca01}. The
one step of this algorithm takes the form for $(t,x)\in Q:$
\begin{align}
X& =x+\theta \left( b(t,x)-F(t,x)\gamma _{\epsilon }\right) +\sqrt{\theta }%
\left( \sigma (t,x)\xi +F(t,x)\beta _{\epsilon }\eta \right) +\mathbf{I}%
(\delta <h)F(t,x)J_{\epsilon },\ \ \   \label{v_one} \\
Y& =y+\theta c(t,x)y,  \label{v_one2} \\
Z& =z+\theta g(t,x)y.  \label{v_one3}
\end{align}

Before we state and prove an error estimate for the one-step of Algorithm~%
\ref{Hca01}, we need to introduce some additional notation. For brevity let
us write $b=b(t,x)$, $\sigma =\sigma (t,x)$, $F=F(t,x)$, $g=g(t,x)$, $%
c=c(t,x)$, $J=J_{\epsilon }$. Let us define the intermediate points $Q_{i}$
and their differences $\Delta _{i}$, for $i=1,\ldots ,4$ :
\begin{align}
\Delta _{1}& =\theta ^{1/2}\left[ \sigma \xi +F\beta _{\epsilon }\eta \right]
,  \label{dQ} \\
\Delta _{2}& =\theta \left[ b-F\gamma _{\epsilon }\right] ,  \notag \\
\Delta _{3}& =\mathbf{I}(\delta <h)FJ,  \notag \\
Q_{1}& =x+\Delta _{1}+\Delta _{2}+\Delta _{3}=X,  \notag \\
Q_{2}& =x+\Delta _{2}+\Delta _{3},  \notag \\
Q_{3}& =x+\Delta _{3},  \notag \\
Q_{4}& =x,  \notag
\end{align}%
where $x\in G.$ Note that $Q_{i}$, $i=1,\ldots ,3,$ can be outside $G.$

\begin{lemma}[Moments of intermediate points $Q_{i}$]
\label{lem:growth-intermediate-points} Under Assumptions~\ref{assumption3.1}
and~\ref{assum:moments-jumps}, there is $K>0$ independent of $\epsilon $ and
$h$ such that for $p\geq 1$:
\begin{align}
\mathbb{E}\left[ |Q_{i}|^{2p}\big|\theta ,t,x\right] & \leq K(1+\theta
^{2p}\gamma _{\epsilon }^{2p}),\ i=1,2,  \label{newbQ1} \\
\mathbb{E}\left[ |Q_{i}|^{2p}\big|\theta ,t,x\right] & \leq K,\ i=3,4,
\label{bQ2}
\end{align}%
where $Q_{i}$ are defined in (\ref{dQ}).
\end{lemma}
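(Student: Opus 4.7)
The plan is to use the elementary $c_p$-inequality $|a_1+\cdots+a_n|^{2p}\le n^{2p-1}(|a_1|^{2p}+\cdots+|a_n|^{2p})$ on each $Q_i$ and then bound the conditional expectation of each summand under $(\theta,t,x)$. The work is essentially bookkeeping: identifying which quantities remain bounded uniformly in $\epsilon$ and which (namely $\gamma_\epsilon$) do not.

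First I would collect the uniform bounds. Since $G$ is bounded and $x\in G$, we have $|x|\le K$, and Assumption~\ref{assumption3.2} then gives $|b(t,x)|+\|\sigma(t,x)\|+\|F(t,x)\|\le K$ on $[t_0,T]\times\bar G$. Next, observe that $\beta_\epsilon$ is bounded uniformly in $\epsilon$, because
\[
\operatorname{tr}(B_\epsilon)=\int_{|z|<\epsilon}|z|^2\,\nu(\mathrm dz)\le \int_{|z|<1}|z|^2\,\nu(\mathrm dz)<\infty
\]
by the L\'evy integrability condition. The entries of $\xi,\eta$ are $\pm1$, so $|\xi|,|\eta|\le K$, and Assumption~\ref{assum:moments-jumps} gives $\mathbb E|J_\epsilon|^{2p}\le K$. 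The only quantity that is allowed to blow up with $\epsilon$ is $\gamma_\epsilon$, which appears only in $\Delta_2$.

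With these bounds in hand, I would dispose of $Q_4=x$ trivially, and bound
\[
\mathbb E\bigl[|Q_3|^{2p}\,\big|\,\theta,t,x\bigr]\le C\bigl(|x|^{2p}+\mathbf{I}(\delta<h)\|F\|^{2p}\mathbb E|J_\epsilon|^{2p}\bigr)\le K,
\]
using independence of $J_\epsilon$ from the jump time. For $Q_2$ I add $\Delta_2=\theta(b-F\gamma_\epsilon)$; the $b$ and $x$ contributions are $\le K$, while $\theta F\gamma_\epsilon$ gives the term $K\theta^{2p}|\gamma_\epsilon|^{2p}$, producing exactly \eqref{newbQ1}. For $Q_1$ I further add $\Delta_1=\theta^{1/2}(\sigma\xi+F\beta_\epsilon\eta)$, whose $2p$-th conditional moment is bounded by $K\theta^p(\|\sigma\|^{2p}+\|F\beta_\epsilon\|^{2p})\le K$ since $\theta\le T$. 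Absorbing this into the constant preserves the bound \eqref{newbQ1}.

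There is no real obstacle here; the only subtle point is noticing that although $\gamma_\epsilon\to\infty$ as $\epsilon\to0$, the small-jump diffusion coefficient $\beta_\epsilon$ stays bounded, so that the Gaussian part of $\Delta_1$ contributes a uniformly bounded moment. This dichotomy is what allows \eqref{bQ2} to hold without an $\epsilon$-dependent prefactor for $Q_3,Q_4$, while \eqref{newbQ1} for $Q_1,Q_2$ must carry the $\theta^{2p}\gamma_\epsilon^{2p}$ term.
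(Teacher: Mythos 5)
Your proof is correct and follows essentially the same route as the paper's: decompose each $Q_i$ into its summands $x,\Delta_1,\Delta_2,\Delta_3$, bound the $2p$-th moment of each term using boundedness of $x,b,\sigma,F$ on the bounded domain, boundedness of $\xi,\eta$, and Assumption~\ref{assum:moments-jumps} for $J_\epsilon$, isolating the single unbounded factor $\gamma_\epsilon$ coming from $\Delta_2$. Your explicit remark that $\operatorname{tr}(B_\epsilon)\le\int_{|z|<1}|z|^2\,\nu(\mathrm dz)<\infty$ keeps $\beta_\epsilon$ uniformly bounded is a point the paper leaves implicit, but the argument is otherwise identical.
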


\begin{proof}
It is not difficult to see that the points $Q_{i},$ $i=1,2,$ are of the
following form
\begin{equation*}
Q_{i}=x+c_{1}\theta ^{1/2}\left[ \sigma \xi +F(t,x)\beta _{\epsilon }\eta %
\right] +\theta \left[ b(t,x)-F(t,x)\gamma _{\epsilon }\right] +\mathbf{I}%
(\theta <h)F(t,x)J_{\epsilon },
\end{equation*}%
where $c_{1}$ is either $0$ or $1$. It is obvious that $\xi $ and $\eta $
and their moments are all bounded. The functions $b(t,x),$ $\sigma (t,x)$
and $F(t,x)$ are bounded as $(t,x)\in Q$, and for $x\in G$, $|x|^{2p}$ is
also bounded. Recall that sufficiently high moments of $J_{\epsilon }$ are
bounded due to Assumption~\ref{assum:moments-jumps}. \ Then, using the
Cauchy-Schwarz inequality, we can show that
\begin{align*}
\mathbb{E}\left[ |Q_{i}|^{2p}\big|\theta ,t,x\right] & \leq |x|^{2p}+K\theta
^{p}+K\theta ^{2p}\left[ 1+\gamma _{\epsilon }^{2p}\right] +K\mathbf{I}%
(\theta <h)\mathbb{E}|J_{\epsilon }|^{2p} \\
& \leq K(1+\theta ^{2p}\gamma _{\epsilon }^{2p}).
\end{align*}%
Hence, we obtained \eqref{newbQ1}. The bound (\ref{bQ2}) is shown
analogously.
\end{proof}

It is not difficult to prove the following technical lemma.

\begin{lemma}[Moments of $\protect\theta $]
\label{lem:momth}For integer $p\geq 2,$ we have%
\begin{equation*}
\mathbb{E}\theta ^{p}\leq K\frac{1-e^{-\lambda _{\epsilon }h}(1+\lambda
_{\epsilon }h)}{\lambda _{\epsilon }^{p}},
\end{equation*}%
where $K>0$ depends on $p$ but is independent of $\lambda _{\epsilon }$ and $%
h.$
\end{lemma}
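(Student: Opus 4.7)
The plan is to reduce the lemma to an elementary calculus inequality, the key insight being a rewriting of the right-hand side as an integral. The first step is to identify the distribution of $\theta$: inspection of Algorithm~\ref{Hca01} shows that the Bernoulli $I_k$ combined with the truncated exponential $\delta_k$ is a standard decomposition for simulating $\delta \wedge h$ where $\delta \sim \mathrm{Exp}(\lambda_\epsilon)$; unconditionally, $\p(\theta > t) = e^{-\lambda_\epsilon t}$ on $[0,h)$ and $\p(\theta > t)=0$ for $t \geq h$. Applying the tail-integral formula $\rE \theta^p = \int_0^\infty p t^{p-1}\p(\theta>t)\,\rd t$ and substituting $u = \lambda_\epsilon t$ then gives
\begin{equation*}
\rE \theta^p = \int_0^h p t^{p-1} e^{-\lambda_\epsilon t}\,\rd t
= \frac{p}{\lambda_\epsilon^p} \int_0^{\lambda_\epsilon h} u^{p-1} e^{-u}\,\rd u.
\end{equation*}

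The second step is the observation which makes the statement meaningful: since $-(1+u)e^{-u}$ is a primitive of $u e^{-u}$, the factor appearing in the bound can be rewritten as
\begin{equation*}
1 - e^{-\lambda_\epsilon h}(1 + \lambda_\epsilon h)
= \int_0^{\lambda_\epsilon h} u e^{-u}\,\rd u.
\end{equation*}
Writing $a := \lambda_\epsilon h$, the lemma thus reduces to proving the $a$-uniform inequality $p \int_0^a u^{p-1} e^{-u}\,\rd u \leq K_p \int_0^a u e^{-u}\,\rd u$ for some $K_p$ depending only on $p$.

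The third step is to establish this elementary bound by a case split on $a$. For $a \leq 1$, since $p \geq 2$ one has $u^{p-1} \leq u$ on $[0,a]$, which gives the estimate with constant $p$. For $a > 1$, the left-hand side is bounded above by $p\,\Gamma(p) = p!$ and the right-hand side is bounded below by $\int_0^1 u e^{-u}\,\rd u = 1 - 2e^{-1}>0$, yielding the constant $p!/(1-2e^{-1})$. Taking the maximum of the two constants finishes the proof. There is no real obstacle: the only substantive step is the rewriting of the $\lambda_\epsilon$-dependent factor as $\int_0^a u e^{-u}\,\rd u$, after which everything is routine.
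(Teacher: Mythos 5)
Your proof is correct and complete. The paper itself offers no proof of this lemma (it is dismissed with ``It is not difficult to prove the following technical lemma''), so there is no argument of the authors' to compare against; your identification of the law of $\theta$ as that of $\delta\wedge h$ with $\delta\sim\mathrm{Exp}(\lambda_\epsilon)$, the reduction via the tail formula and the substitution $u=\lambda_\epsilon t$ to the inequality $p\int_0^a u^{p-1}e^{-u}\,\rd u\leq K_p\int_0^a ue^{-u}\,\rd u$ with $a=\lambda_\epsilon h$, and the two-case estimate all check out (and are consistent with the exact formulas for $\mathbb{E}\theta$ and $\mathbb{E}\theta^2$ the paper uses in its one-step error analysis, where the $p=2$ case holds with equality and $K=2$).
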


Now we prove an estimate for the one-step error.

\begin{theorem}[One--step error of Algorithm~\protect\ref{Hca01}]
\label{Thm:Alg1onestep} Under Assumption~\ref{assumption2.1'} with $l=2,m=4$
and Assumptions~\ref{assumption3.1}, \ref{assum:differentiability} and \ref%
{assum:moments-jumps} the one--step error of Algorithm~\ref{Hca01} given by
\begin{equation*}
R(t,x,y,z):=u^{\epsilon }(t+\theta ,X)Y+Z-u^{\epsilon }(t,x)y-z
\end{equation*}%
satisfies the bound
\begin{equation}
\big|\mathbb{E}[R(t,x,y,z)]\big|\leq K(1+\gamma _{\epsilon }^{2})\frac{%
1-e^{-\lambda _{\epsilon }h}(1+\lambda _{\epsilon }h)}{\lambda _{\epsilon
}^{2}}y,  \label{one-step}
\end{equation}%
where $K>0$ is a constant independent of $h$ and $\epsilon $.
\end{theorem}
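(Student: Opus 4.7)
The plan is to Taylor-expand $u^\epsilon(t+\theta, X)$ around $(t,x)$ conditional on $\theta$, integrate out the noise variables $\xi, \eta, J_\epsilon$, invoke the PIDE \eqref{v1} satisfied by $u^\epsilon$ to cancel everything of first order in $\mathbb{E}\theta$, and bound the residue in terms of $\mathbb{E}\theta^2$ using Lemmas~\ref{lem:growth-intermediate-points} and~\ref{lem:momth}. First I would write
$$
R = [u^\epsilon(t+\theta, X) - u^\epsilon(t,x)]\, y + u^\epsilon(t+\theta, X)(Y-y) + (Z-z),
$$
and use $Y-y = \theta c y$, $Z-z = \theta g y$, so that only $\mathbb{E}[u^\epsilon(t+\theta, X) - u^\epsilon(t,x)]$ together with the deterministic source term $\mathbb{E}\theta\cdot y\,(cu^\epsilon(t,x)+g)$ needs to be controlled. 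The contribution of $u^\epsilon(t+\theta,X)(Y-y)$ is already of second order in $\theta$ and will be absorbed into the remainder at the end.

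Next, I would Taylor-expand $u^\epsilon(t+\theta, X)$ in both $t$ and $x$ to second order around $(t,x)$, keeping a third-order integral remainder, but treating the jump increment $\Delta_3 = \mathbf{I}(\delta<h)FJ_\epsilon$ as an additive shift rather than a small quantity (so that no smallness of $|J_\epsilon|$ is needed). Taking the conditional expectation given $\theta$, the identities $\mathbb{E}\xi = \mathbb{E}\eta = 0$ and $\mathbb{E}\xi\xi^\top = \mathbb{E}\eta\eta^\top = I$ produce the local drift/diffusion piece $\theta\bigl(\partial_t u^\epsilon + \tfrac{1}{2}\sum(a+FB_\epsilon F^\top)^{ij}\partial_{ij}u^\epsilon + \langle b - F\gamma_\epsilon,\nabla u^\epsilon\rangle\bigr)(t,x)$, while for the jump piece I would use $J_\epsilon \perp \delta$, $\mathbb{P}(\delta<h) = 1 - e^{-\lambda_\epsilon h}$, the identity $\lambda_\epsilon \mathbb{E}\theta = 1 - e^{-\lambda_\epsilon h}$ of Lemma~\ref{lem:momth} at $p=1$, and the density \eqref{eq:rhoe} to identify the discrete jump expectation with $\mathbb{E}\theta \int_{|z|>\epsilon}\{u^\epsilon(t,x+Fz) - u^\epsilon(t,x)\}\nu(\mathrm{d} z)$ up to an $O(\mathbb{E}\theta^2)$ correction. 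Collected together the $\mathbb{E}\theta\cdot y$ contribution reads $y\,\mathbb{E}\theta\cdot(\partial_t u^\epsilon + L_\epsilon u^\epsilon + cu^\epsilon + g)(t,x)$, which vanishes identically by the PIDE \eqref{v1}.

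What remains are third-order Taylor remainders evaluated at intermediate points of the form $Q_1, Q_2, Q_3$ of \eqref{dQ}, multiplied by random prefactors carrying at least two powers of $\theta$ (typical terms are $\theta^2$, $\theta|\Delta_1|^2$, $\theta^2\gamma_\epsilon^2$, $\mathbf{I}(\delta<h)\theta|J_\epsilon|^2$, etc.). Assumption~\ref{assum:differentiability} supplies the polynomial bound $K(1+|x|^q)$ on the relevant derivatives, so combining it with Lemma~\ref{lem:growth-intermediate-points} — whose first estimate \eqref{newbQ1} is exactly what injects the $(1+\gamma_\epsilon^2)$ factor — and with Assumption~\ref{assum:moments-jumps} together with \eqref{eq:gala}, every such remainder is dominated by $K(1+\gamma_\epsilon^2)\,y\,\mathbb{E}\theta^2$ uniformly in $\epsilon$. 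A final application of Lemma~\ref{lem:momth} at $p=2$ converts $\mathbb{E}\theta^2$ into the explicit coefficient $\bigl(1 - e^{-\lambda_\epsilon h}(1+\lambda_\epsilon h)\bigr)/\lambda_\epsilon^2$, producing exactly \eqref{one-step}.

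The main technical obstacle is the identification of the non-local part of $L_\epsilon$ with the discrete contribution $\mathbf{I}(\delta<h)FJ_\epsilon$: the numerical scheme evaluates $u^\epsilon$ at $t+\theta$ \emph{after} the diffusive increments $\Delta_1, \Delta_2$ have been added, not at the pre-step point $(t,x)$, and this mismatch creates cross terms mixing jump and diffusion noise. Verifying that these cross contributions are genuinely of order $O((1+\gamma_\epsilon^2)\mathbb{E}\theta^2)$ — with no hidden $1/\lambda_\epsilon$ or divergent $\gamma_\epsilon$ factors beyond the one already appearing — is the delicate bookkeeping at the heart of the proof, and is precisely what the $\gamma_\epsilon$-dependent moment bound of Lemma~\ref{lem:growth-intermediate-points} is designed to make possible.
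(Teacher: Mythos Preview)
Your proposal is correct and follows essentially the same approach as the paper: Taylor-expand $u^\epsilon(t+\theta,X)$ through the intermediate points $Q_i$, use the PIDE \eqref{v1} together with the identity $\lambda_\epsilon\mathbb{E}\theta=1-e^{-\lambda_\epsilon h}$ to cancel the $O(\mathbb{E}\theta)$ contribution, and then bound the residual terms via Lemmas~\ref{lem:growth-intermediate-points} and~\ref{lem:momth}. The paper carries this out by an explicit cascade of expansions $Q_1\to Q_2\to Q_3\to Q_4$ producing named remainders $R_{11},\dots,R_{19},R_{21},R_{22}$ that are then grouped by their $\theta$-degree (the $\mathbf{I}(\theta<h)\theta$ terms picking up the extra $1/\lambda_\epsilon$ from the jump density normalisation), which is exactly the bookkeeping you describe in your final paragraph.
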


\begin{proof}
For any smooth function $v(t,x)$, we write $D_{l}v_{n}=(D_{l}v)(t,Q_{n})$
for the $l$-th time derivative and $(D_{l}^{k}v)(t,x)[f_{1},\ldots ,f_{k}]$
for the $l$-th time derivative of the $k$-th spatial directional derivative
evaluated in the direction $[f_{1},\ldots ,f_{k}]$. For example, if $k=2$
and $l=1$,
\begin{equation*}
D_{1}^{2}v[f_{1},f_{2}]=\sum\limits_{i=1}^{d}\sum%
\limits_{j=1}^{d}f_{1,i}f_{2,j}\frac{\partial ^{3}v}{\partial tx_{i}x_{j}}.
\end{equation*}%
We will also use the following short notation%
\begin{equation*}
D_{l}^{k}v_{i}[f_{1},\ldots ,f_{k}]:=(D_{l}^{k}v)(t,Q_{i})[f_{1},\ldots
,f_{k}].
\end{equation*}

The final aim of this theorem is to achieve an error estimate explicitly
capturing the (singular) dependence of the one-step error on $\epsilon$. To
this end, we split the error into several parts according to the
intermediate points $Q_{i}$ defined in (\ref{dQ}).

Using (\ref{v_one}) and (\ref{dQ}), we have
\begin{align*}
u^{\epsilon }(t+\theta ,X)& =u^{\epsilon }(t+\theta ,Q_{1}) \\
& =u^{\epsilon }\Big(t+\theta ,x+\mathbf{I}(\delta <h)FJ+\theta (b-F\gamma
_{\epsilon })+\theta ^{1/2}(\sigma \xi +F\beta _{\epsilon }\eta )\Big) \\
& =u^{\epsilon }\Big(t+\theta ,x+\Delta _{1}+\Delta _{2}+\Delta _{3}\Big).
\end{align*}%
To precisely account for the factor $\gamma _{\epsilon }$ and powers of $%
\theta $ in the analysis of the one-step error, we use multiple Taylor
expansions of $u^{\epsilon }(t+\theta ,X).$ We obtain
\begin{align}
u^{\epsilon }(t+\theta ,X)& =u^{\epsilon }(t,Q_{1})+\theta
D_{1}u_{1}^{\epsilon }+R_{11}  \label{ost1} \\
& =u^{\epsilon }(t,Q_{2})+D^{1}u_{2}^{\epsilon }[\Delta _{1}]+\frac{1}{2}%
D^{2}u_{2}^{\epsilon }[\Delta _{1},\Delta _{1}]  \notag \\
& +\frac{1}{6}D^{3}u_{2}^{\epsilon }[\Delta _{1},\Delta _{1},\Delta
_{1}]+\theta D_{1}u_{2}^{\epsilon }+\theta D_{1}^{1}u_{2}^{\epsilon }[\Delta
_{1}]  \notag \\
& \quad +R_{11}+R_{12}+R_{13}  \notag \\
& =u^{\epsilon }(t,Q_{3})+D^{1}u_{3}^{\epsilon }[\Delta
_{2}]+D^{1}u_{2}^{\epsilon }[\Delta _{1}]+\frac{1}{2}D^{2}u_{3}^{\epsilon
}[\Delta _{1},\Delta _{1}]+\frac{1}{6}D^{3}u_{2}^{\epsilon }[\Delta
_{1},\Delta _{1},\Delta _{1}]  \notag \\
& \quad +\theta D_{1}u_{3}^{\epsilon }+\theta D_{1}^{1}u_{2}^{\epsilon
}[\Delta _{1}]+R_{11}+R_{12}+R_{13}+R_{14}+R_{15}+R_{16}  \notag \\
& =u^{\epsilon }(t,Q_{3})+D^{1}u_{4}^{\epsilon }[\Delta
_{2}]+D^{1}u_{2}^{\epsilon }[\Delta _{1}]+\frac{1}{2}D^{2}u_{4}^{\epsilon
}[\Delta _{1},\Delta _{1}]  \notag \\
& \quad +\frac{1}{6}D^{3}u_{2}^{\epsilon }[\Delta _{1},\Delta _{1},\Delta
_{1}]+\theta D_{1}u_{4}^{\epsilon }+\theta D_{1}^{1}u_{2}^{\epsilon }[\Delta
_{1}]+R_{1},  \notag
\end{align}%
where the remainders are as follows
\begin{align*}
R_{11}& =\frac{1}{2}\theta ^{2}\int_{0}^{1}sD_{2}u^{\epsilon }\Big(%
t+(1-s)\theta ,Q_{1}\Big)ds, \\
R_{12}& =\frac{1}{24}\int_{0}^{1}s^{3}D^{4}u^{\epsilon
}(t,sQ_{2}+(1-s)Q_{1})[\Delta _{1},\Delta _{1},\Delta _{1},\Delta _{1}]ds, \\
R_{13}& =\frac{1}{2}\theta \int_{0}^{1}s^{2}D_{1}^{2}u^{\epsilon
}(t,sQ_{2}+(1-s)Q_{1})[\Delta _{1},\Delta _{1}]ds, \\
R_{14}& =\frac{1}{2}\int_{0}^{1}sD^{2}u^{\epsilon
}(t,s(Q_{3}+(1-s)Q_{2})[\Delta _{2},\Delta _{2}]ds, \\
R_{15}& =\frac{1}{2}\int_{0}^{1}s^{2}D^{3}u^{\epsilon
}(t,s(Q_{3})+(1-s)Q_{2})[\Delta _{1},\Delta _{1},\Delta _{2}]ds, \\
R_{16}& =\theta \int_{0}^{1}sD_{1}^{1}u^{\epsilon
}(t,s(Q_{3})+(1-s)Q_{2})[\Delta _{2}]ds, \\
R_{17}& =\int_{0}^{1}sD^{2}u^{\epsilon }(t,s(Q_{4})+(1-s)Q_{3})[\Delta
_{2},\Delta _{3}]ds, \\
R_{18}& =\frac{1}{2}\int_{0}^{1}sD^{3}u^{\epsilon
}(t,s(Q_{4})+(1-s)Q_{3})[\Delta _{1},\Delta _{1},\Delta _{3}]ds, \\
R_{19}& =\theta \int_{0}^{1}sD_{1}^{1}u^{\epsilon
}(t,s(Q_{4})+(1-s)Q_{3})[\Delta _{3}]ds, \\
R_{1}& =R_{11}+R_{12}+R_{13}+R_{14}+R_{15}+R_{16}+R_{17}+R_{18}+R_{19}.
\end{align*}%
Using (\ref{ost1}), (\ref{v_one2})-(\ref{v_one3}), and the fact that $\xi $
and $\eta $ have mean zero and that components of $\xi ,$ $\eta ,$ $\theta ,$
$J$ are mutually independent, we obtain
\begin{align}
& \mathbb{E}[u^{\epsilon }(t+\theta ,X)Y+Z]  \label{ost2} \\
& =\mathbb{E}\Big[\left( u^{\epsilon }(t,Q_{3})+D^{1}u_{4}^{\epsilon
}[\Delta _{2}]+\frac{1}{2}D^{2}u_{4}^{\epsilon }[\Delta _{1},\Delta
_{1}]+\theta D_{1}u_{4}^{\epsilon }\right) (y+\theta cy)  \notag \\
& \quad +z+\theta gy+y(1+\theta c)R_{1}\Big].  \notag
\end{align}

The following elementary formulas are needed for future calculations:
\begin{gather}
\mathbb{E}\left( D^{2}u^{\epsilon }[\Delta _{1},\Delta _{1}]|\theta \right)
\label{ost3} \\
=\theta \sum_{i,j=1}^{d}\left[ a^{ij}(t,x)+\left( F(t,x)B_{\epsilon
}(t,x)F^{\top }(t,x)\right) ^{ij}\right] \frac{\partial ^{2}u^{\epsilon }}{%
\partial x^{i}\partial x^{j}}  \notag \\
=:\theta (a+FB_{\epsilon }F^{T}):\nabla \nabla u^{\epsilon },  \notag \\
u^{\epsilon }(t,Q_{3})-u^{\epsilon }(t,x)=u^{\epsilon }(t,x+\mathbf{I}%
(\theta <h)FJ)-u^{\epsilon }(t,x)  \notag \\
=\mathbf{I}(\theta <h)[u^{\epsilon }(t,x+FJ)-u^{\epsilon }(t,x)],  \notag \\
\mathbb{E}[\theta ]=\frac{1-e^{-\lambda _{\epsilon }h}}{\lambda _{\epsilon }}%
,  \notag \\
\mathbb{E}[\theta ^{2}]=2\frac{1-e^{-\lambda _{\epsilon }h}(1+\lambda
_{\epsilon }h)}{\lambda _{\epsilon }^{2}},  \notag \\
\mathbb{E}[\mathbf{I}(\theta <h)]=1-e^{-\lambda _{\epsilon }h},  \notag \\
\mathbb{E}[\mathbf{I}(\theta <h)\theta ]=\frac{1-e^{-\lambda _{\epsilon
}h}(1+\lambda _{\epsilon }h)}{\lambda _{\epsilon }}.  \notag
\end{gather}%
Also, $\mathbb{E}v(J)$ for some $v(z)$ will mean%
\begin{equation*}
\mathbb{E}[v(J)]=\mathbb{E}[v(J_{\epsilon })]=\frac{1}{\lambda _{\epsilon }}%
\int_{|s|>\epsilon }v(s)\nu (ds).
\end{equation*}%
Noting that $u_{4}^{\epsilon }=u^{\epsilon }(t,x)=u^{\epsilon }$ and using (%
\ref{ost2}), (\ref{dQ}), (\ref{ost3}) and (\ref{v1}), we obtain
\begin{align*}
& \mathbb{E}R:=\mathbb{E}\big[u^{\epsilon }(t+\theta ,X)Y+Z-u^{\epsilon }y-z%
\big] \\
& =\mathbb{E}[\theta \Big(D_{1}u^{\epsilon }+D^{1}u^{\epsilon }[b-F\gamma
_{\epsilon }]+\frac{1}{2}(a+FB_{\epsilon }F^{T}):\nabla \nabla u^{\epsilon }%
\Big)(y+\theta cy)+\theta gy \\
& +u^{\epsilon }(t,x+\mathbf{I}(\theta <h)FJ)(y+\theta cy)-u^{\epsilon }y%
\big]+y\mathbb{E}[(1+\theta c)R_{1}] \\
& =\mathbb{E}[\theta \Big(D_{1}u^{\epsilon }+D^{1}u^{\epsilon }[b-F\gamma
_{\epsilon }]+\frac{1}{2}(a+FB_{\epsilon }F^{T}):\nabla \nabla u^{\epsilon
}+cu^{\epsilon }+g\Big)y \\
& +[u^{\epsilon }(t,x+\mathbf{I}(\theta <h)FJ)-u^{\epsilon })]y \\
& +\theta ^{2}\Big(D_{1}u^{\epsilon }+D^{1}u^{\epsilon }[b-F\gamma
_{\epsilon }]+\frac{1}{2}(a+FB_{\epsilon }F^{T}):\nabla \nabla u^{\epsilon }%
\Big)cy \\
& +\theta \left[ u^{\epsilon }(t,x+\mathbf{I}(\theta <h)FJ)-u^{\epsilon }%
\right] cy\Big]+y\mathbb{E}[(1+\theta c)R_{1}] \\
& =\mathbb{E}[\theta \Big(D_{1}u^{\epsilon }+D^{1}u^{\epsilon }[b-F\gamma
_{\epsilon }]+\frac{1}{2}(a+FB_{\epsilon }F^{T}):\nabla \nabla u^{\epsilon
}+cu^{\epsilon }+g\Big)y \\
& +\mathbf{I}(\theta <h)[u^{\epsilon }(t,x+FJ)-u^{\epsilon })]y \\
& +\theta ^{2}\Big(D_{1}u^{\epsilon }+D^{1}u^{\epsilon }[b-F\gamma
_{\epsilon }]+\frac{1}{2}(a+FB_{\epsilon }F^{T}):\nabla \nabla u^{\epsilon }%
\Big)cy \\
& +\theta \mathbf{I}(\theta <h)[u^{\epsilon }(t,x+FJ)-u^{\epsilon }]cy]+y%
\mathbb{E}[(1+\theta c)R_{1}] \\
& =\mathbb{E}[\theta ]\Big(D_{1}u^{\epsilon }+D^{1}u^{\epsilon }[b-F\gamma
_{\epsilon }]+\frac{1}{2}(a+FB_{\epsilon }F^{T}):\nabla \nabla u^{\epsilon
}+cu^{\epsilon }+g\Big)y \\
& +\mathbb{E}\left[ \mathbf{I}(\theta <h)[u^{\epsilon }(t,x+FJ)-u^{\epsilon
}(t,x)]y\right] +y\mathbb{E}[R_{1}(1+\theta c)+R_{2}] \\
& =\frac{1-e^{-\lambda _{\epsilon }h}}{\lambda _{\epsilon }}\Big(%
D_{1}u^{\epsilon }+D^{1}u^{\epsilon }[b-F\gamma _{\epsilon }]+\frac{1}{2}%
(a+FB_{\epsilon }F^{T}):\nabla \nabla u^{\epsilon }+cu^{\epsilon }(t,x)+g%
\Big)y \\
& +\left( 1-e^{-\lambda _{\epsilon }h}\right) \mathbb{E}\left[ u^{\epsilon
}(t,x+FJ)-u^{\epsilon }(t,x)\right] y+y\mathbb{E}[R_{0}] \\
& =\frac{1-e^{-\lambda _{\epsilon }h}}{\lambda _{\epsilon }}\Big(%
D_{1}u^{\epsilon }+D^{1}u^{\epsilon }[b-F\gamma _{\epsilon }]+\frac{1}{2}%
(a+FB_{\epsilon }F^{T}):\nabla \nabla u^{\epsilon }+cu^{\epsilon }(t,x)+g%
\Big)y \\
& +\frac{1-e^{-\lambda _{\epsilon }h}}{\lambda _{\epsilon }}%
\int\limits_{|s|>\epsilon }\{u^{\epsilon }(t,x+Fs)-u^{\epsilon }(t,x)\}\nu
(ds)y+y\mathbb{E}[R_{0}] \\
& =y\mathbb{E}[R_{0}],
\end{align*}%
where
\begin{equation*}
R_{0}=R_{1}(1+\theta c)+R_{2},
\end{equation*}%
\begin{equation*}
R_{2}=R_{21}+R_{22},
\end{equation*}%
and
\begin{align*}
R_{21}& =\theta ^{2}\Big(D_{1}u^{\epsilon }+D^{1}u^{\epsilon }[b-F\gamma
_{\epsilon }]+\frac{1}{2}(a+FB_{\epsilon }F^{T}):\nabla \nabla u^{\epsilon }%
\Big)c, \\
R_{22}& =\theta \mathbf{I}(\theta <h)[u^{\epsilon }(t,x+FJ)-u^{\epsilon
}(t,x)]c.
\end{align*}

It is clear that many of the terms in $R$ are only non--zero in the case $%
\theta <h$, i.e. when a jump occurs. We rearrange the terms in $R_{0}$
according to their degree in $\theta$:
\begin{align*}
R_{0}& =\underbrace{R_{17}+R_{18}+R_{19}+R_{22}}_{\text{$\mathbf{I}(\theta
<h)\theta $-terms}}\ +\underbrace{%
R_{11}+R_{12}+R_{13}+R_{14}+R_{15}+R_{16}+R_{21}}_{\text{$\theta ^{2}$ -
terms}} \\
& \quad +\underbrace{\theta c(R_{17}+R_{18}+R_{19})}_{\text{$(\mathbf{I}%
(\theta <h)\theta ^{2}$-terms}}\ +\underbrace{\theta
c(R_{11}+R_{12}+R_{13}+R_{14}+R_{15}+R_{16})}_{\text{$\theta ^{3}$ - terms}}
\end{align*}

Now to estimate the terms in the error $R_{0},$ we observe that (i) $%
\int_{|s|>\epsilon }s\nu (ds)=\gamma _{\epsilon }+\int_{|s|>1}s\nu (ds)$
with the latter integral bounded and, in particular, $|\mathbb{E}[J]|\leq
K(1+|\gamma _{\epsilon }|)/\lambda _{\epsilon };$ (ii) $\mathbb{E}[J]^{2p}$,
$p\geq 1,$ are bounded; (iii) the terms $R_{17}$, $R_{18}$, $R_{19},$ $%
R_{21} $ and $R_{22}$ contain derivatives of $u^{\epsilon }$ evaluated at or
between the points $Q_{3}$ and $Q_{4}$ and in their estimation Assumption~%
\ref{assum:differentiability} and \eqref{bQ2} from Lemma~\ref%
{lem:growth-intermediate-points} are used; (iv) the terms $R_{11}$, $R_{12}$%
, $R_{13}$, $R_{14}$, $R_{15}$ and $R_{16}$ contain derivatives of $%
u^{\epsilon }$ evaluated at or between the points $Q_{1}$ and $Q_{2}$ and in
their estimation Assumption~\ref{assum:differentiability}, \eqref{newbQ1}
from Lemma~\ref{lem:growth-intermediate-points}, and Lemma~\ref{lem:momth}
are used; (v) $\gamma _{\epsilon }^{2}/\lambda _{\epsilon }$ is bounded by a
constant independent of $\epsilon .$ As a result, we obtain%
\begin{align*}
\Big|& \mathbb{E}\big[R_{17}+R_{18}+R_{19}+R_{22}\big]\Big|\leq K_{1}\frac{%
(1+\gamma _{\epsilon }^{2})}{\lambda _{\epsilon }}\mathbb{E}\left[ \mathbf{I}%
(\theta <h)\theta \right] , \\
\Big|& \mathbb{E}\big[\theta (R_{17}+R_{18}+R_{19})\big]\Big|\leq K_{2}\frac{%
(1+\gamma _{\epsilon }^{2})}{\lambda _{\epsilon }}\mathbb{E}\left[ \mathbf{I}%
(\theta <h)\theta ^{2}\right] \leq K_{3}\frac{(1+\gamma _{\epsilon }^{2})}{%
\lambda _{\epsilon }}\mathbb{E}\left[ \mathbf{I}(\theta <h)\theta \right] ,
\\
\Big|& \mathbb{E}\big[(R_{11}+R_{12}+R_{13}+R_{14}+R_{15}+R_{16}+R_{21})\big]%
\Big|\leq K_{4}(1+\gamma _{\epsilon }^{2})(\mathbb{E}\left[ \theta ^{2}%
\right] +\gamma _{\epsilon }^{q}\mathbb{E}\theta ^{q+2})) \\
& \leq K_{5}(1+\gamma _{\epsilon }^{2})\frac{1-e^{-\lambda _{\epsilon
}h}(1+\lambda _{\epsilon }h)}{\lambda _{\epsilon }^{2}},
\end{align*}%
and%
\begin{align*}
\Big|& \mathbb{E}\big[\theta (R_{11}+R_{12}+R_{13}+R_{14}+R_{15}+R_{16})\big]%
\Big|\leq K_{6}(1+\gamma _{\epsilon }^{2})(\mathbb{E}\left[ \theta ^{3}%
\right] +\gamma _{\epsilon }^{q}\mathbb{E}\theta ^{q+3})) \\
& \leq K_{7}(1+\gamma _{\epsilon }^{2})\frac{1-e^{-\lambda _{\epsilon
}h}(1+\lambda _{\epsilon }h)}{\lambda _{\epsilon }^{3}}\leq K_{8}(1+\gamma
_{\epsilon }^{2})\frac{1-e^{-\lambda _{\epsilon }h}(1+\lambda _{\epsilon }h)%
}{\lambda _{\epsilon }^{2}},
\end{align*}%
where all constants $K_{i}>0$ are independent of $h$ and $\epsilon $ and $%
q\geq 1$.

Overall we obtain
\begin{align*}
\Big|\mathbb{E}[R]\Big|& \leq (K_{1}+K_{3})\frac{(1+\gamma _{\epsilon }^{2})%
}{\lambda _{\epsilon }}y\mathbb{E}\left[ \mathbf{I}(\theta <h)\theta \right]
+(K_{5}+K_{8})(1+\gamma _{\epsilon }^{2})y\frac{1-e^{-\lambda _{\epsilon
}h}(1+\lambda _{\epsilon }h)}{\lambda _{\epsilon }^{2}} \\
& \leq K\left\{ \frac{1}{\lambda _{\epsilon }}\mathbb{E}\left[ \mathbf{I}%
(\theta <h)\theta \right] +\frac{1-e^{-\lambda _{\epsilon }h}(1+\lambda
_{\epsilon }h)}{\lambda _{\epsilon }^{2}}\right\} (1+\gamma _{\epsilon
}^{2})y \\
& =2K(1+\gamma _{\epsilon }^{2})\frac{1-e^{-\lambda _{\epsilon }h}(1+\lambda
_{\epsilon }h)}{\lambda _{\epsilon }^{2}}y.\qedhere
\end{align*}%
$\qedhere$
\end{proof}

\begin{remark}
\label{Rem:onestep}We note the following two asymptotic regimes for the
one-step error (\ref{one-step}). For $\lambda _{\epsilon }h<1$ (in practice,
this occurs only when $\lambda _{\epsilon }$ is small or moderate like it is
in jump-diffusions), we can expand the exponent in (\ref{one-step}) and
obtain that the one-step error is of order $O(h^{2}):$
\begin{equation*}
\big|\mathbb{E}[R(t,x,y,z)]\big|\leq K(1+\gamma _{\epsilon }^{2})h^{2}y.
\end{equation*}

When $\lambda _{\epsilon }$ is very large (e.g., for small $\epsilon $ in
the infinite activity case) then the term with $e^{-\lambda _{\epsilon }h}$
can be neglected and we get
\begin{equation*}
\big|\mathbb{E}[R(t,x,y,z)]\big|\leq K\frac{1+\gamma _{\epsilon }^{2}}{%
\lambda _{\epsilon }^{2}}y.
\end{equation*}%
The usefulness of a more precise estimate (\ref{one-step}) is that it
includes situations in between these two asymptotic regimes and also allows
to consider an interplay between $h$ and $\epsilon $ (see Section~\ref%
{sec:iij}).
\end{remark}

\subsection{Global error\label{sec:glob}}

In this section we obtain an estimate for the global weak-sense error of
Algorithm~\ref{Hca01}. We first estimate average number of steps $\mathbb{E}%
\varkappa $ of Algorithm~\ref{Hca01}.

\begin{lemma}[\textit{Number of steps}]
\label{lem:nums} The average number of steps $\varkappa $ for the chain $%
X_{k}$ from Algorithm~\ref{Hca01} satisfies the following bound
\begin{equation*}
E\varkappa \leq \frac{(T-t_{0})\lambda _{\epsilon }}{1-e^{-\lambda
_{\epsilon }h}}+1.
\end{equation*}
\end{lemma}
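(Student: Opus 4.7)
The plan is to dominate $\varkappa$ by a purely time-based stopping index, apply Wald's identity, and then sharpen the resulting Wald bound by a ``new better than used'' (NBU) overshoot estimate for the step distribution.

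First, from the construction of Algorithm~\ref{Hca01} the step sizes $\theta_k=\vartheta_{k+1}-\vartheta_k$ are i.i.d.\ with the common distribution of $\min(\xi,h)$, where $\xi\sim\mathrm{Exp}(\lambda_\epsilon)$: with probability $e^{-\lambda_\epsilon h}$ the variable $I_k$ equals $0$ and $\theta_k=h$, while on $\{I_k=1\}$ the jump time $\delta_k$ has the density of an exponential conditioned to lie in $[0,h]$. A direct integration gives $\mathbb{E}\theta=(1-e^{-\lambda_\epsilon h})/\lambda_\epsilon$. Because the chain may also terminate early when $X_{k+1}\notin G$, one has the pathwise domination $\varkappa\le\tau:=\min\{k\ge 1:S_k\ge T-t_0\}$ with $S_k=\sum_{j=0}^{k-1}\theta_j$, so it suffices to bound $\mathbb{E}\tau$. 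Since $\tau$ is a stopping time for the natural filtration of the i.i.d.\ sequence $(\theta_k)$, Wald's identity yields
\[
\mathbb{E}\tau\cdot\mathbb{E}\theta=\mathbb{E}S_\tau=(T-t_0)+\mathbb{E}R,\qquad R:=S_\tau-(T-t_0)\ge 0,
\]
reducing the lemma to the overshoot bound $\mathbb{E}R\le\mathbb{E}\theta$.

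For this bound I would exploit the NBU property of $\theta$: with $\bar F(t):=\mathbb{P}(\theta\ge t)=e^{-\lambda_\epsilon t}\mathbf{I}_{t\le h}$, a case split on whether $s+t\le h$ gives $\bar F(s+t)\le\bar F(s)\bar F(t)$ for all $s,t\ge 0$. Writing
\[
R=\sum_{k\ge 0}(\theta_{k+1}-D_k)^+\mathbf{I}_{\{\tau>k\}},\qquad D_k:=(T-t_0)-S_k,
\]
and using the independence of $\theta_{k+1}$ from $\mathcal{F}_k=\sigma(\theta_0,\dots,\theta_{k-1})$, the identity $\mathbb{E}[(\theta-d)^+]=\int_0^\infty\bar F(d+s)\,ds$ combined with the NBU inequality
\[
\int_0^\infty\bar F(d+s)\,ds\le\bar F(d)\int_0^\infty\bar F(s)\,ds=\bar F(d)\,\mathbb{E}\theta
\]
gives
\[
\mathbb{E}R\le\mathbb{E}\theta\sum_{k\ge 0}\mathbb{E}[\bar F(D_k)\mathbf{I}_{\{\tau>k\}}]=\mathbb{E}\theta\sum_{k\ge 0}\mathbb{P}(\tau=k+1)=\mathbb{E}\theta,
\]
using $\mathbb{P}(\theta_{k+1}\ge D_k\mid\mathcal{F}_k)=\bar F(D_k)$, $\{\tau=k+1\}=\{\tau>k\}\cap\{\theta_{k+1}\ge D_k\}$, and $\tau<\infty$ a.s.

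Combining with Wald produces $\mathbb{E}\varkappa\le\mathbb{E}\tau\le(T-t_0)/\mathbb{E}\theta+1=(T-t_0)\lambda_\epsilon/(1-e^{-\lambda_\epsilon h})+1$, as claimed. The delicate point is precisely the overshoot inequality $\mathbb{E}R\le\mathbb{E}\theta$: the crude pathwise estimate $S_\tau\le T-t_0+h$ would only produce $\mathbb{E}\tau\le(T-t_0+h)/\mathbb{E}\theta$, exceeding the stated constant $+1$ by the factor $h/\mathbb{E}\theta=\lambda_\epsilon h/(1-e^{-\lambda_\epsilon h})$, which is strictly larger than $1$ for $\lambda_\epsilon>0$. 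The NBU sharpening replaces the excess $h$ by $\mathbb{E}\theta$ and recovers the precise constant displayed in the lemma.
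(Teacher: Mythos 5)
Your proof is correct, and it takes a genuinely different route from the paper's. The paper uses the same domination of $\varkappa$ by the purely time-based index $\varkappa'=\inf\{l:S_l\ge T-t_0\}$, but then avoids any overshoot analysis by applying the optional stopping theorem to the martingale $\tilde S_k=S_k-k\mathbb{E}\theta$ at the index $\varkappa'-1$, using that $S_{\varkappa'-1}<T-t_0$ pathwise to read off $\mathbb{E}[\varkappa'-1]\,\mathbb{E}\theta=\mathbb{E}S_{\varkappa'-1}\le T-t_0$. That is shorter, but the shortcut is delicate: $\varkappa'-1$ is not a stopping time for the natural filtration of $(\theta_k)$ (deciding $\{\varkappa'-1\le k\}$ requires the next step), and the asserted identity $\mathbb{E}S_{\varkappa'-1}=\mathbb{E}[\varkappa'-1]\,\mathbb{E}\theta$ is exactly where the inspection paradox enters, since the crossing step is size-biased; for a general step distribution one only gets $\mathbb{E}S_{\varkappa'-1}\le\mathbb{E}[\varkappa'-1]\,\mathbb{E}\theta$, which points the wrong way, and the lemma's constant $+1$ can actually fail for non-NBU step laws. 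Your argument --- Wald's identity at the genuine stopping time $\tau$ together with the NBU overshoot bound $\mathbb{E}[S_\tau-(T-t_0)]\le\mathbb{E}\theta$ for $\bar F(t)=e^{-\lambda_\epsilon t}\mathbf{I}_{t\le h}$ --- supplies precisely the quantitative control that makes the constant $+1$ legitimate, so what it ``buys'' is rigour at the cost of a page of renewal-theoretic bookkeeping; you correctly identify the overshoot inequality as the crux and your verification of it is sound. Two small points to tidy up: with $S_k=\sum_{j=0}^{k-1}\theta_j$ the increment from $S_k$ to $S_{k+1}$ is $\theta_k$, not $\theta_{k+1}$, so the indices in your decomposition of $R$ should be shifted; and Wald's identity requires $\mathbb{E}\tau<\infty$, which you should note follows, e.g., from $\mathbb{P}(\theta=h)=e^{-\lambda_\epsilon h}>0$ and a geometric-trials argument.
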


\begin{proof}
It is obvious that if we replace the bounded domain $G$ in Algorithm~\ref%
{Hca01} with the whole space $\mathbb{R}^{d}$ (i.e., replace the Dirichlet
problem by the Cauchy one), then the corresponding number of steps $%
\varkappa ^{\prime }$ of Algorithm~\ref{Hca01} is not less than $\varkappa .$
Hence it is sufficient to get an estimate for $E\varkappa ^{\prime }.$ Let $%
\delta _{1},\delta _{2},\dots $ be the interarrival times of the jumps, $%
\theta _{i}=\delta _{i}\wedge h$ for $i\geq 0,$ and $S_{k}=\sum_{i=0}^{k-1}%
\theta _{i}$ for $k\geq 0$. Then
\begin{equation*}
\varkappa \leq \varkappa ^{\prime }:=\inf \{l:S_{l}\geq T-t_{0}\}.
\end{equation*}%
Introduce the martingale: $\tilde{S}_{0}=0$ and $\tilde{S}_{k}:=S_{k}-k%
\mathbb{E}\theta $ for $k\geq 1$. Since $\theta _{i}\leq h$ we have that $%
\tilde{S}_{\varkappa ^{\prime }-1}\leq S_{\varkappa ^{\prime }-1}<T-t_{0}$
almost surely and thus by the optional stopping theorem we obtain
\begin{equation*}
\mathbb{E}\tilde{S}_{\varkappa ^{\prime }-1}=\mathbb{E}\tilde{S}_{0}=0.
\end{equation*}%
Therefore
\begin{equation*}
\mathbb{E}\tilde{S}_{\varkappa ^{\prime }-1}=\mathbb{E}[\varkappa ^{\prime
}-1]\cdot \mathbb{E}[\theta ]
\end{equation*}%
and we conclude
\begin{eqnarray*}
\mathbb{E}\varkappa &\leq &\mathbb{E}\varkappa ^{\prime }=\mathbb{E}%
[\varkappa ^{\prime }-1]+1 \\
&=&\frac{\mathbb{E}S_{\varkappa ^{\prime }-1}}{\mathbb{E}\theta }+1\leq
\frac{(T-t_{0})\lambda _{\epsilon }}{1-\mathrm{e}^{-\lambda _{\epsilon }h}}%
+1.\qedhere
\end{eqnarray*}
\end{proof}

We also need the following auxiliary lemma.

\begin{lemma}[Boundedness of $Y_{k}$ in Algorithm~\protect\ref{Hca01}]
\label{lem:boundednessY} The chain $Y_{k}$ defined in (\ref{Hc02}) is
uniformly bounded by a deterministic constant:
\begin{equation*}
Y_{k}\leq e^{\bar{c}(T-t_{0}+h)},
\end{equation*}%
where $\bar{c}=\max_{(t,x)\in \bar{Q}}c(t,x)$.
\end{lemma}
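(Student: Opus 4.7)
The plan is to proceed by unrolling the deterministic one-step update in \eqref{Hc02}, applying the elementary inequality $1+x \le e^x$, and bounding the accumulated time by a direct stopping-time argument.

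First, I would write $Y_k$ explicitly. Since $Y_0=1$ and $Y_{k+1}=\bigl(1+\theta_k c(\vartheta_k,X_k)\bigr)Y_k$ by \eqref{Hc02}, induction on $k$ gives
\[
Y_k \;=\; \prod_{j=0}^{k-1}\bigl(1+\theta_j\, c(\vartheta_j,X_j)\bigr).
\]
By Assumption~\ref{assumption3.2}, $|c|$ is uniformly bounded, say by $K$, and since $\theta_j\le h$, for sufficiently small $h$ each factor $1+\theta_j c(\vartheta_j,X_j)$ is strictly positive, so $Y_k>0$ for all $k\le\varkappa$.

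Next, I would apply the inequality $1+x\le e^x$ (valid for all real $x$) factor by factor, together with the upper bound $c(\vartheta_j,X_j)\le \bar c$ on $\bar Q$:
\[
Y_k \;\le\; \prod_{j=0}^{k-1} e^{\theta_j c(\vartheta_j,X_j)} \;=\; \exp\!\Bigl(\sum_{j=0}^{k-1}\theta_j\,c(\vartheta_j,X_j)\Bigr) \;\le\; \exp\!\Bigl(\bar c\sum_{j=0}^{k-1}\theta_j\Bigr).
\]

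Finally, I would control the accumulated time. Because $\vartheta_k = t_0 + \sum_{j=0}^{k-1}\theta_j$ and the algorithm only continues while $\vartheta_k<T$ and $X_k\in G$, the penultimate time satisfies $\vartheta_{\varkappa-1}<T$, so $\vartheta_\varkappa = \vartheta_{\varkappa-1}+\theta_{\varkappa-1}<T+h$. In either stopping scenario, $\sum_{j=0}^{k-1}\theta_j \le T-t_0+h$ for all $k\le\varkappa$, which combined with the previous display yields the stated bound. The only obstacle worth flagging is cosmetic: if $\bar c<0$, the step $\bar c\sum_j\theta_j\le \bar c(T-t_0+h)$ fails, and one should instead replace $\bar c$ by $\bar c^+=\max(\bar c,0)$ in the argument; the bound as written in the lemma is then still valid (and a fortiori loose). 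No deep step is involved — the proof is essentially just a discrete Grönwall estimate specialised to this product form.
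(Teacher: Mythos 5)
Your proof is correct and follows essentially the same route as the paper: unroll the recursion from \eqref{Hc02}, apply $1+x\le e^{x}$ with $c\le\bar c$, and bound the accumulated time $\sum_j\theta_j=\vartheta_k-t_0$ by $T-t_0+h$. The sign caveat you flag is genuine (the paper's own last step $e^{\bar c(\vartheta_k-t_0)}\le e^{\bar c(T-t_0+h)}$ also tacitly assumes $\bar c\ge 0$; for $\bar c<0$ the stated constant should indeed be $e^{\bar c^{+}(T-t_0+h)}$, since already $Y_0=1$ would violate the bound as written), but this does not affect the boundedness conclusion used downstream.
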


\begin{proof}
From (\ref{Hc02}), we can express $Y_{k}$ via previous $Y_{k-1}$ and get the
required estimate as follows:
\begin{align*}
Y_{k}& =Y_{k-1}(1+\theta _{k}c(t_{k-1},x_{k-1})\leq Y_{k-1}(1+\theta _{k}%
\bar{c}) \\
& \leq Y_{k-1}e^{\bar{c}\theta _{k-1}}\leq Y_{k-2}e^{\bar{c}(\theta
_{k}+\theta _{k-1})}\leq Y_{0}e^{\bar{c}(\vartheta _{k}-t_{0})}\leq e^{\bar{c%
}(T-t_{0}+h)}.\qedhere
\end{align*}
\end{proof}

Now we prove the convergence theorem for Algorithm~\ref{Hca01}.

\begin{theorem}[Global error of Algorithm~\protect\ref{Hca01}]
\label{thm:total-error} Under Assumption~\ref{assumption2.1'} with $l=2,$ $%
m=4$ and Assumptions~\ref{assumption3.1}, \ref{assum:differentiability} and %
\ref{assum:moments-jumps}, the global error of Algorithm~\ref{Hca01}
satisfies the following bound
\begin{align}
&\big|\mathbb{E}[\varphi (\bar{\vartheta}_{\varkappa },X_{\varkappa
})Y_{\varkappa }+Z_{\varkappa }]-u^{\epsilon }(t_{0},x_{0})\big|  \label{gl1}
\\
&\leq K(1+\gamma _{\epsilon }^{2})\left( \frac{1}{\lambda _{\epsilon }}-h%
\frac{e^{-\lambda _{\epsilon }h}}{1-e^{-\lambda _{\epsilon }h}}\right) +K%
\frac{1-e^{-\lambda _{\epsilon }h}}{\lambda _{\epsilon }},  \notag
\end{align}%
where $K>0$ is a constant independent of $h$ and $\epsilon .$
\end{theorem}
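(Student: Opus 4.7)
The plan is the standard weak-error telescoping argument for a stopped Markov chain, combining Theorem~\ref{Thm:Alg1onestep} (one-step error), Lemma~\ref{lem:boundednessY} (boundedness of $Y_k$), and Lemma~\ref{lem:nums} (expected number of steps).

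First, since $(\bar{\vartheta}_{\varkappa},X_{\varkappa})\in Q^{c}$ at the stopping time and $u^{\epsilon}=\varphi$ on $Q^{c}$, we have $\varphi(\bar{\vartheta}_{\varkappa},X_{\varkappa})=u^{\epsilon}(\bar{\vartheta}_{\varkappa},X_{\varkappa})$, so it suffices to bound $\mathbb{E}[u^{\epsilon}(\bar{\vartheta}_{\varkappa},X_{\varkappa})Y_{\varkappa}+Z_{\varkappa}]-u^{\epsilon}(t_{0},x_{0})$. Setting $F_{k}:=u^{\epsilon}(\vartheta_{k}\wedge T,X_{k})Y_{k}+Z_{k}$, I would telescope $F_{\varkappa}-F_{0}=\sum_{k=0}^{\varkappa-1}(F_{k+1}-F_{k})$. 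For every step that stays inside $[t_{0},T]$, the increment $F_{k+1}-F_{k}$ is exactly the one-step error $R_{k}:=R(\vartheta_{k},X_{k},Y_{k},Z_{k})$ of Theorem~\ref{Thm:Alg1onestep}. For the last step, where $\vartheta_{\varkappa}$ may exceed $T$, the increment acquires an additional boundary contribution $[u^{\epsilon}(T,X_{\varkappa})-u^{\epsilon}(\vartheta_{\varkappa},X_{\varkappa})]Y_{\varkappa}$ after extending $u^{\epsilon}$ past $T$ in a manner compatible with Assumption~\ref{assum:differentiability}.

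Second, I would control the randomly-stopped sum via a Wald-type identity: since $\varkappa$ is a stopping time, $\mathbf{I}_{k<\varkappa}\in\mathcal{F}_{k}$, hence
\[
\Bigl|\mathbb{E}\sum_{k=0}^{\varkappa-1}R_{k}\Bigr|=\Bigl|\sum_{k\ge 0}\mathbb{E}\bigl[\mathbf{I}_{k<\varkappa}\,\mathbb{E}[R_{k}\mid\mathcal{F}_{k}]\bigr]\Bigr|\le K(1+\gamma_{\epsilon}^{2})\,\frac{1-e^{-\lambda_{\epsilon}h}(1+\lambda_{\epsilon}h)}{\lambda_{\epsilon}^{2}}\,\mathbb{E}\varkappa,
\]
where Theorem~\ref{Thm:Alg1onestep} bounds the conditional one-step error and Lemma~\ref{lem:boundednessY} absorbs $Y_{k}$ into a deterministic constant. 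Substituting $\mathbb{E}\varkappa\le(T-t_{0})\lambda_{\epsilon}/(1-e^{-\lambda_{\epsilon}h})+1$ from Lemma~\ref{lem:nums} and simplifying via the algebraic identity
\[
\frac{1-e^{-\lambda_{\epsilon}h}(1+\lambda_{\epsilon}h)}{\lambda_{\epsilon}(1-e^{-\lambda_{\epsilon}h})}=\frac{1}{\lambda_{\epsilon}}-\frac{h\,e^{-\lambda_{\epsilon}h}}{1-e^{-\lambda_{\epsilon}h}}
\]
produces the first term of the claimed bound, with the subleading contribution from the ``$+1$'' in $\mathbb{E}\varkappa$ absorbed into the same expression.

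Third, the last-step boundary overshoot is bounded pointwise by $K|\vartheta_{\varkappa}-T|Y_{\varkappa}\le K\theta_{\varkappa-1}$ using the time-smoothness of $u^{\epsilon}$ from Assumption~\ref{assum:differentiability} and the uniform bound of Lemma~\ref{lem:boundednessY}, so its expectation is at most $K\mathbb{E}\theta=K(1-e^{-\lambda_{\epsilon}h})/\lambda_{\epsilon}$, exactly the second term of the bound. The main obstacle is a clean treatment of this last step: one must either extend $u^{\epsilon}$ past $T$ smoothly so that the one-step error machinery still applies, or analyse the last increment directly via Taylor expansion using the moment estimates of Lemma~\ref{lem:growth-intermediate-points} together with Lemma~\ref{lem:momth}. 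Everything else—the Wald-type interchange of expectation and infinite sum (justified by the deterministic bound on $\mathbb{E}[R_{k}\mid\mathcal{F}_{k}]$ and $\mathbb{E}\varkappa<\infty$), and the final algebraic simplification—is routine.
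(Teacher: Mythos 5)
Your proposal is correct and follows essentially the same route as the paper: the same splitting into a telescoping sum of conditional one-step errors (bounded via Theorem~\ref{Thm:Alg1onestep}, Lemma~\ref{lem:boundednessY} and Lemma~\ref{lem:nums}) plus a terminal overshoot term controlled by the time-smoothness of $u^{\epsilon}$ and $\mathbb{E}\theta$, followed by the same algebraic simplification. The paper handles the overshoot by inserting the indicator $\mathbf{I}(\vartheta_{\varkappa}\geq T)$ and keeping $u^{\epsilon}(\vartheta_{\varkappa},X_{\varkappa})$ with $\vartheta_{\varkappa}$ possibly exceeding $T$ in the telescoping sum, rather than capping at $\vartheta_{k}\wedge T$, but this is only a cosmetic difference.
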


\begin{proof}
Recall (see (\ref{eq:fkac2})):
\begin{equation*}
u^{\epsilon }(t,x)=\mathbb{E}\left[ \varphi \left( \tilde{\tau}_{t,x},\tilde{%
X}_{t,x}(\tilde{\tau}_{t,x})\right) \tilde{Y}_{t,x,1}(\tilde{\tau}_{t,x})+%
\tilde{Z}_{t,x,1,0}(\tilde{\tau}_{t,x})\right] .
\end{equation*}%
The global error
\begin{equation*}
\mathbf{R}:=\big|\mathbb{E}[\varphi (\bar{\vartheta}_{\varkappa
},X_{\varkappa })Y_{\varkappa }+Z_{\varkappa }]-u^{\epsilon }(t_{0},x_{0})%
\big|
\end{equation*}%
can be written as
\begin{align}
\mathbf{R}& =\big|\mathbb{E}[\mathbf{I}(\vartheta _{\varkappa }\geq T)\left(
\varphi (\bar{\vartheta}_{\varkappa },X_{\varkappa })Y_{\varkappa
}-u^{\epsilon }(\vartheta _{\varkappa },X_{\varkappa })Y_{\varkappa }\right)
+u^{\epsilon }(\vartheta _{\varkappa },X_{\varkappa })Y_{\varkappa
}+Z_{\varkappa }-v(t_{0},x_{0})]\big|  \label{gl2} \\
& \leq \big|\mathbb{E}[\mathbf{I}(\vartheta _{\varkappa }\geq T)\left(
\varphi (\bar{\vartheta}_{\varkappa },X_{\varkappa })Y_{\varkappa
}-u^{\epsilon }(\vartheta _{\varkappa },X_{\varkappa })Y_{\varkappa }\right)
]\big|+\big|\mathbb{E}[u^{\epsilon }(\vartheta _{\varkappa },X_{\varkappa
})Y_{\varkappa }+Z_{\varkappa }-u^{\epsilon }(t_{0},x_{0})]\big|.  \notag
\end{align}%
Using Lemma~\ref{lem:boundednessY}, Assumption~\ref{assum:differentiability}
and Lemmas~\ref{lem:growth-intermediate-points} and~\ref{lem:momth} as well
as that $\bar{\vartheta}_{\varkappa }-\vartheta _{\varkappa }\leq \theta
_{\varkappa }$, we have for the first term in (\ref{gl2}):
\begin{equation}
\mathbb{E}[\mathbf{I}(\vartheta _{\varkappa }\geq T)\left( \varphi (\bar{%
\vartheta}_{\varkappa },X_{\varkappa })Y_{\varkappa }-u^{\epsilon
}(\vartheta _{\varkappa },X_{\varkappa })Y_{\varkappa }\right) ]\leq KE\left[
\theta _{\varkappa }(1+\gamma _{\epsilon }^{q}\theta _{\varkappa }^{q})%
\right] \leq K\frac{1-e^{-\lambda _{\epsilon }h}}{\lambda _{\epsilon }},
\label{gl3}
\end{equation}%
where $K>0$ does not depend on $h$ or $\varepsilon .$

For the second term in (\ref{gl2}), we exploit ideas from \cite{MT04} to
re-express the global error. We get using Theorem~\ref{Thm:Alg1onestep} and
Lemmas \ref{lem:boundednessY} and \ref{lem:nums}:%
\begin{align}
& \left\vert \mathbb{E}[u^{\epsilon }(\vartheta _{\varkappa },X_{\varkappa
})Y_{\varkappa }+Z_{\varkappa }-u^{\epsilon }(t_{0},x_{0})]\right\vert
\label{gl4} \\
& =\left\vert \mathbb{E}\left[ \sum\limits_{k=0}^{\varkappa -1}\mathbb{E}%
\left[ u^{\epsilon }(\vartheta _{k+1},X_{k+1})Y_{k+1}+Z_{k+1}-u^{\epsilon
}(\vartheta _{k},X_{k})Y_{k}-Z_{k}\Big|\vartheta _{k},X_{k},Y_{k},Z_{k}%
\right] \right] \right\vert  \notag \\
& =\left\vert \mathbb{E}\left[ \sum\limits_{k=0}^{\varkappa -1}\mathbb{E}%
\left[ R(\vartheta _{k},X_{k},Y_{k},Z_{k})\Big|\vartheta
_{k},X_{k},Y_{k},Z_{k}\right] \right] \right\vert  \notag \\
& \leq \mathbb{E}\left[ \sum\limits_{k=0}^{\varkappa -1}\frac{1-e^{-\lambda
_{\epsilon }h}(1+\lambda _{\epsilon }h)}{\lambda _{\epsilon }^{2}}K(1+\gamma
_{\epsilon }^{2})Y_{k}\right]  \notag \\
& \leq K\frac{1+\gamma _{\epsilon }^{2}}{\lambda _{\epsilon }^{2}}\left(
1-e^{-\lambda _{\epsilon }h}(1+\lambda _{\epsilon }h)\right) \mathbb{E}%
\varkappa  \notag \\
& \leq K(1+\gamma _{\epsilon }^{2})\left( \frac{1}{\lambda _{\epsilon
}(1-e^{-\lambda _{\epsilon }h)}}-h\frac{e^{-\lambda _{\epsilon }h}}{%
1-e^{-\lambda _{\epsilon }h}}\right) (T-t_{0})  \notag \\
& \leq K(1+\gamma _{\epsilon }^{2})\left( \frac{1}{\lambda _{\epsilon }}-h%
\frac{e^{-\lambda _{\epsilon }h}}{1-e^{-\lambda _{\epsilon }h}}\right) ,
\notag
\end{align}%
where, as usual constants $K>0$ are changing from line to line. Combining (%
\ref{gl2})-(\ref{gl4}), we arrive at (\ref{gl1}).
\end{proof}

\begin{remark}[Error estimate and convergence]
Note that the error estimate in Theorem~\ref{thm:total-error} gives us the
expected results in the limiting cases (see also Remark~\ref{Rem:onestep}).
If $\lambda _{\epsilon }h<1$, we obtain:
\begin{equation*}
\mathbf{R}\leq K(1+\gamma _{\epsilon }^{2})h,
\end{equation*}%
which is expected for weak convergence in the jump-diffusion case.

If $\lambda _{\epsilon }$ is large (meaning that almost always $\theta <h$),
the error is tending to
\begin{equation*}
\mathbf{R}\leq K(1+\gamma _{\epsilon }^{2})\frac{1}{\lambda _{\epsilon }},
\end{equation*}%
as expected (cf. \cite{Higa10}).

We also remark that for any fixed $\lambda _{\epsilon }$, we have first
order convergence when $h\rightarrow 0.$
\end{remark}

\begin{remark}
\label{rem:sym}In the case of symmetric measure $\nu (z)$ we have $\gamma
_{\epsilon }=0$ and hence the global error (\ref{gl1}) becomes
\begin{align}
&\big|\mathbb{E}[\varphi (\bar{\vartheta}_{\varkappa },X_{\varkappa
})Y_{\varkappa }+Z_{\varkappa }]-u^{\epsilon }(t_{0},x_{0})\big|
\label{sym1} \\
&\leq K\left( \frac{1}{\lambda _{\epsilon }}-h\frac{e^{-\lambda _{\epsilon
}h}}{1-e^{-\lambda _{\epsilon }h}}\right) +K\frac{1-e^{-\lambda _{\epsilon
}h}}{\lambda _{\epsilon }}.  \notag
\end{align}
\end{remark}

\subsection{Remark on the Cauchy problem\label{sec:Cau}}

Let us set $G=\mathbb{R}^{d}$ in (\ref{v1}) and hence consider the Cauchy
problem for the PIDE:%
\begin{align}
\frac{\partial u^{\epsilon }}{\partial t}+L_{\epsilon }u^{\epsilon
}+c(t,x)u^{\epsilon }+g(t,x)& =0, & & (t,x)\in Q,  \label{c1} \\
u^{\epsilon }(T,x)& =\varphi (t,x), & & x\in \mathbb{R}^{d}.  \notag
\end{align}%
In this case Algorithm~\ref{Hca01} stops only when $\vartheta _{\varkappa
}\geq T$ as there is no spatial boundary. Theorem~\ref{Thm:Alg1onestep}
remains valid for the Cauchy problem, although in this case one should
replace the constant $K$ in the right-hand side of the bound \eqref{one-step}
with a function $K(x)>0$ satisfying
\begin{equation*}
K(x)\leq \tilde{K}(1+|x|^{2q})
\end{equation*}%
with some constants $\tilde{K}>0$ and $q\geq 1.$ Consequently, to prove an
analogue of the global convergence Theorem~\ref{thm:total-error}, we need to
prove boundedness of moments $\mathbb{E}X_{k}^{2p}.$ Let
\begin{equation*}
X_{k}\equiv X_{\varkappa }\ \ \text{for all }k\geq \varkappa .
\end{equation*}

\begin{lemma}
\label{lem:Cmom}Under Assumptions~\ref{assumption3.1}, \ref{assumption3.2},
and~\ref{assum:moments-jumps}, we have for $X_{k}$ from Algorithm~\ref{Hca01}%
:
\begin{equation}
\mathbb{E}|X_{k}|^{2p}\leq K(1+|x|^{2p})  \label{c2}
\end{equation}%
with some constants $K>0$ and $p\geq 1.$
\end{lemma}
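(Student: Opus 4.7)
The plan is to bound the moments of the stopped chain $X_k^\ast := X_{k\wedge\varkappa}$ uniformly in $k$, via a one-step drift-martingale decomposition combined with a discrete Gronwall argument tailored to the random stopping time $\varkappa$. Under the convention $X_k\equiv X_\varkappa$ for $k\geq\varkappa$, any uniform bound on $\mathbb{E}|X_k^\ast|^{2p}$ is exactly the bound \eqref{c2}.

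First, for $k<\varkappa$ I would split the increment $X_{k+1}-X_k=D_k+M_k$ with $D_k:=\mathbb{E}[X_{k+1}-X_k\mid\mathcal{F}_k]$. Since $\xi_k,\eta_k,\theta_k,J_{\epsilon,k}$ are independent of $\mathcal{F}_k$ and $\xi_k,\eta_k$ are centred, and using $\mathbb{E}[\mathbf{I}(\delta_k<h)]=\lambda_\epsilon\mathbb{E}\theta_k$ together with $\int_{|z|>\epsilon}z\,\nu(\mathrm dz)=\gamma_\epsilon+\int_{|z|>1}z\,\nu(\mathrm dz)$, a short computation shows that the potentially singular-in-$\epsilon$ contribution $F\gamma_\epsilon$ cancels and
\begin{equation*}
D_k = \mathbb{E}[\theta_k]\Bigl(b(\vartheta_k,X_k)+F(\vartheta_k,X_k)\!\!\int_{|z|>1}\!z\,\nu(\mathrm dz)\Bigr).
\end{equation*}
By Assumption~\ref{assumption3.2} (which, taken at $x=0$, forces $\int |z|^2\,\nu(\mathrm dz)<\infty$ and $\|F(t,x)\|\leq C(1+|x|)$), this gives $|D_k|\leq C\,\mathbb{E}[\theta_k]\,(1+|X_k|)$.

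Next, I would expand $|X_{k+1}|^{2p}$ around $|X_k|^{2p}$ and take conditional expectation with respect to $\mathcal{F}_k$. Each noise contribution is controlled by independence, the elementary bound $\mathbb{E}\theta_k^q\leq h^{q-1}\mathbb{E}\theta_k$ (since $\theta_k\leq h$), Assumption~\ref{assum:moments-jumps} on high moments of $J_{\epsilon,k}$, and $\mathbb{E}[\mathbf{I}(\delta_k<h)]=\lambda_\epsilon\mathbb{E}\theta_k$; the result is the one-step recursion
\begin{equation*}
\mathbb{E}\bigl[|X_{k+1}|^{2p}\bigm|\mathcal{F}_k\bigr]\leq |X_k|^{2p}+C\,\mathbb{E}[\theta_k]\,\bigl(1+|X_k|^{2p}\bigr),
\end{equation*}
with $C=C(p,h,\epsilon)$ absorbing $\epsilon$-dependent quantities such as $|\gamma_\epsilon|^{2p}$. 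Attaching the $\mathcal{F}_k$-measurable indicator $\mathbf{I}(k<\varkappa)$ and using $\theta_k\perp\mathcal{F}_k$, this becomes, for $a_k:=\mathbb{E}|X_k^\ast|^{2p}$,
\begin{equation*}
a_{k+1}\leq a_k+w_k\,(1+a_k),\qquad w_k:=C\,\mathbb{E}\bigl[\mathbf{I}(k<\varkappa)\,\theta_k\bigr].
\end{equation*}

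The closing step is a discrete Gronwall argument: the pathwise identity
\begin{equation*}
\sum_{k=0}^{\infty}\mathbf{I}(k<\varkappa)\,\theta_k=\vartheta_\varkappa-t_0\leq T-t_0+h
\end{equation*}
shows that the total weight $W:=\sum_{k}w_k$ is deterministically bounded by $C(T-t_0+h)$. Passing to $b_k:=a_k+1$ gives $b_{k+1}\leq b_k(1+w_k)$, whence $b_k\leq (|x|^{2p}+1)\mathrm{e}^{W}$, and therefore $a_k\leq K(1+|x|^{2p})$ uniformly in $k$, which is \eqref{c2}. The main obstacle lies precisely in this Gronwall closure: using a naive uniform weight $C\,\mathbb{E}\theta_k$ would yield exponential growth $\mathrm{e}^{Ck\,\mathbb{E}\theta_k}$, which fails to be uniform in $k$ because $\varkappa$ is random and can greatly exceed $(T-t_0)/\mathbb{E}\theta_k$ on low-probability events. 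Retaining the indicator $\mathbf{I}(k<\varkappa)$ inside the weight is exactly what converts a random number of steps into the deterministic time horizon $T-t_0+h$, and this is the key adaptation of the classical Euler moment estimates to our restricted jump-adapted time-stepping.
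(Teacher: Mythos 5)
Your overall strategy is the same as the paper's (a one-step moment recursion whose increment carries a factor of $\theta_k$, closed by the observation that the total elapsed time $\sum_{k<\varkappa}\theta_k=\vartheta_\varkappa-t_0\leq T-t_0+h$ is deterministically bounded), and your drift computation with the cancellation of $F\gamma_\epsilon$ against $\mathbb{E}[\mathbf{I}(\delta<h)J_\epsilon]$ is exactly the paper's. However, the closing step as you wrote it has a genuine gap. From the one-step bound you legitimately obtain
\begin{equation*}
a_{k+1}\leq a_k + C\,\mathbb{E}\bigl[\mathbf{I}(k<\varkappa)\,\theta_k\,(1+|X_k|^{2p})\bigr],
\end{equation*}
but passing from this to $a_{k+1}\leq a_k+w_k(1+a_k)$ with the \emph{deterministic} weight $w_k=C\,\mathbb{E}[\mathbf{I}(k<\varkappa)\theta_k]$ requires the factorisation $\mathbb{E}[\mathbf{I}(k<\varkappa)\theta_k|X_k|^{2p}]\leq \mathbb{E}[\mathbf{I}(k<\varkappa)\theta_k]\cdot\mathbb{E}|X_k^\ast|^{2p}$. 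Since $\theta_k$ is independent of $\mathcal{F}_k$ this reduces to $\mathbb{E}[\mathbf{I}(k<\varkappa)|X_k|^{2p}]\leq\mathbb{P}(k<\varkappa)\,\mathbb{E}|X_{k\wedge\varkappa}|^{2p}$, which is false in general: $\mathbf{I}(k<\varkappa)$ and $|X_k|^{2p}$ are both $\mathcal{F}_k$-measurable and correlated (e.g.\ if the chain is large precisely on the event that it has not yet stopped, the left side exceeds the right). So the weights $w_k$ cannot be pulled out, and your discrete Gronwall with total weight $W\leq C(T-t_0+h)$ does not close. You correctly diagnosed that the naive decoupled recursion gives the useless bound $e^{Ck\mathbb{E}\theta}$, but your fix reintroduces a decoupling of the same kind one step later.

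The repair is what the paper does: keep the product inside a single expectation, sum the one-step inequalities to get $a_n\leq|x_0|^{2p}+C\,\mathbb{E}\sum_{k<n\wedge\varkappa}\theta_k(1+|X_k|^{2p})$, rewrite the sum pathwise as $\int_{t_0}^{\vartheta_{n\wedge\varkappa}}(1+\tilde U(t))\,\mathrm{d}t$ for the piecewise-constant interpolation $\tilde U(t)=|X_k|^{2p}$ on the $k$-th step interval, bound the upper limit by the deterministic horizon $T+h$, and apply the \emph{continuous-time} Gronwall lemma in the variable $t\in[t_0,T+h]$. A secondary point: you allow the one-step constant to depend on $\epsilon$ (absorbing $|\gamma_\epsilon|^{2p}$), whereas the lemma is used with $K$ independent of $\epsilon$ and $h$; the paper achieves this by pairing $\gamma_\epsilon^{2}$ with $\mathbb{E}\theta^{2}\leq K/\lambda_\epsilon^{2}$ and using the bound $\gamma_\epsilon^{2}/\lambda_\epsilon\leq K$ of \eqref{eq:gala}, so that every term is dominated by $K\,\mathbb{E}\theta=K(1-e^{-\lambda_\epsilon h})/\lambda_\epsilon$ with $K$ uniform in $\epsilon$ and $h$.
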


\begin{proof}
As usual, in this proof $K>0$ is a constant independent of $\epsilon $ and $%
h $ which can change from line to line in derivations. We first prove the
lemma for an integer $p\geq 1.$

We have
\begin{align}
& |X_{k+1}|^{2p}=|(X_{k+1}-X_{k})+X_{k}|^{2p}\leq |X_{k}|^{2p}+\mathbb{I}%
_{\varkappa \geq k+1}\left\vert X_{k}\right\vert ^{2p-2}  \label{c3} \\
& \times \left[ 2p(X_{k},X_{k+1}-X_{k})+p(2p-1)|X_{k+1}-X_{k}|^{2}\right]
+K\sum_{l=3}^{2p}\left\vert X_{k}\right\vert ^{2p-l}|X_{k+1}-X_{k}|^{l}.
\notag
\end{align}%
For $\varkappa >k:$%
\begin{align*}
X_{k+1}-X_{k}& =\theta _{k+1}\left( b(\vartheta _{k},X_{k})-F(\vartheta
_{k},X_{k})\gamma _{\epsilon }\right) +\sqrt{\theta _{k+1}}\left( \sigma
(\vartheta _{k},X_{k})\xi _{k}+F(\vartheta _{k},X_{k})\beta _{\epsilon }\eta
\right) \\
& \qquad +\mathbf{I}(\delta _{k+1}<h)F(\vartheta _{k},X_{k})J_{\epsilon
,k+1}.
\end{align*}%
Then
\begin{align*}
\mathbb{E}\left( X_{k+1}-X_{k}|X_{k}\right) & =\mathbb{I}_{\varkappa
>k}\left( b(\vartheta _{k},X_{k})-F(\vartheta _{k},X_{k})\gamma _{\epsilon
}\right) \mathbb{E}\theta _{k+1} \\
& +\mathbb{I}_{\varkappa >k}F(\vartheta _{k},X_{k})\mathbb{E}\left( \mathbf{I%
}(\delta _{k+1}<h)J_{\epsilon ,k+1}\right) \\
& =\mathbb{I}_{\varkappa >k}\frac{1-e^{-\lambda _{\epsilon }h}}{\lambda
_{\epsilon }}\left[ b(\vartheta _{k},X_{k})+F(\vartheta
_{k},X_{k})\int_{|s|>1}s\nu (ds)\right] ,
\end{align*}%
where we used%
\begin{align*}
& -\gamma _{\epsilon }\mathbb{E}\theta _{k+1}+\mathbb{E}\left( \mathbf{I}%
(\delta _{k+1}<h)J_{\epsilon ,k+1}\right) \\
& =-\gamma _{\epsilon }\frac{1-e^{-\lambda _{\epsilon }h}}{\lambda
_{\epsilon }}+\left( 1-e^{-\lambda _{\epsilon }h}\right) \left[ \frac{\gamma
_{\epsilon }}{\lambda _{\epsilon }}+\frac{1}{\lambda _{\epsilon }}%
\int_{|s|>1}s\nu (ds)\right] \\
& =\frac{1-e^{-\lambda _{\epsilon }h}}{\lambda _{\epsilon }}\int_{|s|>1}s\nu
(\mathrm{d}s).
\end{align*}%
By the linear growth Assumption~\ref{assumption3.2} and Assumption~\ref%
{assum:moments-jumps}, we get
\begin{align}
& \left\vert \mathbb{E}\left[ \left\vert X_{k}\right\vert
^{2p-2}(X_{k},X_{k+1}-X_{k})\right] \right\vert  \label{c4} \\
& \leq K\frac{1-e^{-\lambda _{\epsilon }h}}{\lambda _{\epsilon }}\left(
\mathbb{E}\mathbf{I}_{\varkappa >k}(\omega )|X_{k}|^{2p-2}+\mathbb{E}\mathbf{%
I}_{\varkappa >k}(\omega )|X_{k}|^{2p}\right)  \notag \\
& \leq K\frac{1-e^{-\lambda _{\epsilon }h}}{\lambda _{\epsilon }}\mathbb{E}%
\mathbf{I}_{\varkappa >k}(\omega )|X_{k}|^{2p}  \notag
\end{align}%
using that $\mathbb{E}\mathbf{I}_{\varkappa >k}(\omega )|X_{k}|^{2p-2}\leq
\mathbb{E}\mathbf{I}_{\varkappa >k}(\omega )|X_{k}|^{2p}$. Further,
\begin{eqnarray*}
\mathbb{E}\left( |X_{k+1}-X_{k}|^{2}|X_{k}\right) &\leq &\mathbf{I}%
_{\varkappa >k}\left( a(\vartheta _{k},X_{k})+F(\vartheta
_{k},X_{k})B_{\epsilon }F^{\top }(\vartheta _{k},X_{k})\right) \mathbb{E}%
\theta _{k+1} \\
&&+2\mathbf{I}_{\varkappa >k}\left( b(\vartheta _{k},X_{k})-F(\vartheta
_{k},X_{k})\gamma _{\epsilon }\right) ^{2}\mathbb{E}\theta _{k+1}^{2} \\
&&+2\mathbf{I}_{\varkappa >k}F(\vartheta _{k},X_{k})F^{\top }(\vartheta
_{k},X_{k})\mathbb{E}\left( \mathbf{I}(\delta _{k+1}<h)J_{\epsilon
,k+1}^{2}\right) ,
\end{eqnarray*}%
and thus
\begin{eqnarray}
\left\vert \mathbb{E}\left[ \left\vert X_{k}\right\vert
^{2p-2}|X_{k+1}-X_{k}|^{2}\right] \right\vert &\leq &K\frac{1-e^{-\lambda
_{\epsilon }h}}{\lambda _{\epsilon }}\left( \mathbb{E}\mathbf{I}_{\varkappa
>k}|X_{k}|^{2p-2}+\mathbb{E}\mathbf{I}_{\varkappa >k}|X_{k}|^{2p}\right)
\label{c5} \\
&&+K\frac{1-e^{-\lambda _{\epsilon }h}(1+\lambda _{\epsilon }h)}{\lambda
_{\epsilon }^{2}}(1+\gamma _{\epsilon }^{2})\left( \mathbb{E}\mathbf{I}%
_{\varkappa >k}|X_{k}|^{2p-2}+\mathbb{E}\mathbf{I}_{\varkappa
>k}|X_{k}|^{2p}\right)  \notag \\
&&+K\frac{1-e^{-\lambda _{\epsilon }h}}{\lambda _{\epsilon }}\left( \mathbb{%
\ E}\mathbf{I}_{\varkappa >k}|X_{k}|^{2p-2}+\mathbb{E}\mathbf{I}_{\varkappa
>k}|X_{k}|^{2p}\right)  \notag \\
&\leq &K\frac{1-e^{-\lambda _{\epsilon }h}}{\lambda _{\epsilon }}\mathbb{E}%
\mathbf{I}_{\varkappa >k}|X_{k}|^{2p},  \notag
\end{eqnarray}%
using that
\begin{equation*}
\mathbb{E}J_{\epsilon ,k+1}^{2}=\frac{1}{\lambda _{\epsilon }}%
\int_{|s|>\epsilon }s^{2}\nu (ds)=\frac{B_{\epsilon }}{\lambda _{\epsilon }}+%
\frac{1}{\lambda _{\epsilon }}\int_{|s|>1}s^{2}\nu (ds)
\end{equation*}%
and that $\dfrac{\gamma _{\epsilon }^{2}}{\lambda _{\epsilon }}$ is bounded.

For the last term in (\ref{c3}), observe that
\begin{equation*}
\left\vert X_{k}\right\vert ^{2p-l}|X_{k+1}-X_{k}|^{l}\leq \theta _{k+1}
\left[ \frac{2p-l}{2p}|X_{k}|^{2p}+\frac{l}{2p}|X_{k+1}-X_{k}|^{2p}\frac{1}{%
\theta _{k+1}^{2p/l}}\right] ,\ l=3,\ldots ,2p.
\end{equation*}%
Then one can show that%
\begin{equation}
\sum_{l=3}^{2p}\mathbb{E}\left\vert X_{k}\right\vert
^{2p-l}|X_{k+1}-X_{k}|^{l}\leq K\frac{1-e^{-\lambda _{\epsilon }h}}{\lambda
_{\epsilon }}\mathbb{E}\mathbf{I}_{\varkappa >k}|X_{k}|^{2p}.  \label{c6}
\end{equation}

Combining (\ref{c3})-(\ref{c6}), we get%
\begin{equation*}
\mathbb{E}|X_{k+1}|^{2p}\leq \mathbb{E}|X_{k}|^{2p}+K\frac{1-e^{-\lambda
_{\epsilon }h}}{\lambda _{\epsilon }}\mathbb{E}\mathbf{I}_{\varkappa
>k}|X_{k}|^{2p}=\mathbb{E}|X_{k}|^{2p}+K\mathbb{E}\left[ \mathbb{\theta }%
_{k+1}\mathbb{I}_{\varkappa >k}|X_{k}|^{2p}\right] ,
\end{equation*}%
whence
\begin{equation}
\mathbb{E}|X_{\varkappa }|^{2p}\leq |x_{0}|^{2p}+K\mathbb{E}%
\dsum\limits_{k=0}^{\varkappa -1}\mathbb{\theta }_{k+1}|X_{k}|^{2p}.
\label{c7}
\end{equation}%
Introduce a continuous time piece-wise constant process
\begin{equation*}
\tilde{U}(t)=|X_{k}|^{2p}\ \ \text{for }t\in \lbrack \vartheta
_{k},\vartheta _{k+1}),\ \ k=0,\ldots ,\varkappa -1,
\end{equation*}%
and
\begin{equation*}
\tilde{U}(t)=|X_{\varkappa }|^{2p}\text{ for }t\geq \vartheta _{\varkappa }.
\end{equation*}%
Then we can write (\ref{c7}) as%
\begin{eqnarray*}
E\tilde{U}(\vartheta _{\varkappa }) &=&E\tilde{U}(T+h)\leq
|x_{0}|^{2p}+KE\int_{t_{0}}^{\vartheta _{\varkappa }}\tilde{U}(t)ds \\
&\leq &|x_{0}|^{2p}+K\int_{t_{0}}^{T+h}E\tilde{U}(t)ds.
\end{eqnarray*}%
By Gronwall's inequality, we get
\begin{equation*}
E\tilde{U}(\vartheta _{\varkappa })\leq e^{K(T+h-t_{0})}|x_{0}|^{2p},
\end{equation*}%
implies (\ref{c2}) for integer $p\geq 1$. Then, by Jensen's inequality, (\ref%
{c2}) holds for non-integer $p\geq 1$ as well.
\end{proof}

Based on the discussion before Lemma~\ref{lem:Cmom} and on the moments
estimate (\ref{c2}) of Lemma~\ref{lem:Cmom}, it is not difficult to show
that the global error estimate (\ref{gl1}) for Algorithm~\ref{Hca01} also
holds in the Cauchy problem case.

\subsection{The case of infinite intensity of jumps\label{sec:iij}}

In this section we combine the previous results, Theorem~\ref{Thm_eps}\ and~%
\ref{thm:total-error}, to obtain an overall error estimate for solving the
problem (\ref{eq:problem}) in the case of infinite intensity of jumps by
Algorithm~\ref{Hca01}. We obtain%
\begin{align}
&\big|\mathbb{E}[\varphi (\bar{\vartheta}_{\varkappa },X_{\varkappa
})Y_{\varkappa }+Z_{\varkappa }]-u(t_{0},x_{0})\big|  \label{inf1} \\
&\leq K(1+\gamma _{\epsilon }^{2})\left( \frac{1}{\lambda _{\epsilon }}-h%
\frac{e^{-\lambda _{\epsilon }h}}{1-e^{-\lambda _{\epsilon }h}}\right) +K%
\frac{1-e^{-\lambda _{\epsilon }h}}{\lambda _{\epsilon }}+K\int_{|z|\leq
\epsilon }|z|^{3}\nu (\mathrm{d}z),  \notag
\end{align}%
where $K>0$ is independent of $h$ and $\epsilon .$

Let us consider an $\alpha $-stable process as in Example~\ref%
{ex:alpha-stable}, i.e., for $\alpha \in (0,2)$ the L\'evy measure
\begin{equation*}
\nu (\mathrm{d}z)\sim |z|^{-1-\alpha }\mathrm{d}z,
\end{equation*}%
where we are focusing our attention on the singularity near zero. Then
\begin{gather*}
\lambda _{\epsilon }=\int_{|z|\geq \epsilon }\nu (\mathrm{d}z)\sim \epsilon
^{-\alpha }, \\
\gamma _{\epsilon }^{2}=\left[ \int_{\epsilon \leq |z|\leq 1}z^{i}\nu (%
\mathrm{d}z)\right] ^{2}\sim \epsilon ^{2-2\alpha }, \\
\int_{|z|\leq \epsilon }|z|^{3}\nu (\mathrm{d}y)\sim \epsilon ^{3-\alpha }.
\end{gather*}%
Hence
\begin{align}
&\big|\mathbb{E}[\varphi (\bar{\vartheta}_{\varkappa },X_{\varkappa
})Y_{\varkappa }+Z_{\varkappa }]-u(t_{0},x_{0})\big| \\
&\leq K\left[ (1+\epsilon ^{2-2\alpha })\left( \epsilon ^{\alpha }-h\frac{%
e^{-\epsilon ^{-\alpha }h}}{1-e^{-\epsilon ^{-\alpha }h}}\right) +\epsilon
^{\alpha }\left( 1-e^{-\epsilon ^{-\alpha }h}\right) +\epsilon ^{3-\alpha }%
\right] .  \notag
\end{align}
Let us measure the computational cost of Algorithm~\ref{Hca01} in terms of
the average number of steps (see Lemma~\ref{lem:nums}). Since
\begin{equation*}
E\varkappa \leq \frac{(T-t_{0})\lambda _{\epsilon }}{1-e^{-\lambda
_{\epsilon }h}}\leq K\frac{\epsilon ^{-\alpha }}{1-e^{-\epsilon ^{-\alpha }h}%
},
\end{equation*}%
we choose to use the cost associated with the average number of steps as
\begin{equation*}
C:=\frac{\epsilon ^{-\alpha }}{1-e^{-\epsilon ^{-\alpha }h}}.
\end{equation*}%
We fix a tolerance level $\rho _{tol}$ and require $\epsilon $ and $h$ to be
so that
\begin{equation*}
\rho _{tol}=\rho (\epsilon ,h):=(1+\epsilon ^{2-2\alpha })\left( \epsilon
^{\alpha }-\frac{he^{-\epsilon ^{-\alpha }h}}{1-e^{-\epsilon ^{-\alpha }h}}%
\right) +\epsilon ^{\alpha }\left( 1-e^{-\epsilon ^{-\alpha }h}\right)
+\epsilon ^{3-\alpha }.
\end{equation*}%
Note that since we are using the Euler scheme for SDE approximation, the
decrease of $\rho _{tol}$ in terms of cost cannot be faster than linear. We
now consider three cases of $\alpha .$

\textbf{The case }$\alpha =1.$ We have
\begin{equation*}
\rho (\epsilon ,h)=2\left( \epsilon -\frac{he^{-\epsilon ^{-1}h}}{%
1-e^{-\epsilon ^{-1}h}}\right) +\epsilon \left( 1-e^{-\epsilon
^{-1}h}\right) +\epsilon ^{2}=O(\epsilon )
\end{equation*}%
and by choosing sufficiently small $\epsilon $ we can reach the required $%
\rho _{tol}.$ It is optimal to take $h=\infty $ (in practice, taking $%
h=T-t_{0})$ and the cost is then $C=1/\epsilon .$ Hence $\rho _{tol}$ is
inversely proportional to $C,$ and convergence is linear in cost (to reduce $%
\rho _{tol}$ twice, we need to double $C).$

\textbf{The case }$\alpha \in (0,1).$ We have
\begin{equation*}
\rho (\epsilon ,h)\leq \epsilon ^{2-\alpha }+2\epsilon ^{\alpha }+\epsilon
^{3-\alpha }=O(\epsilon ^{\alpha }).
\end{equation*}%
Again, it is optimal to take $h=\infty $ and we have linear convergence in
cost.

\textbf{The case }$\alpha \in (1,2).$ If we take $h=\infty ,$ then $\rho
(\epsilon ,h)=O(\epsilon ^{2-\alpha })$ and the convergence order in terms
of cost is $2/\alpha -1,$ which is very slow (e.g., for $\alpha =3/2,$ the
order is $1/3$ and for $\alpha =1.9,$ the order is $\approx 0.05$). Let us
now take $h=\epsilon ^{\ell }$ with $\ell \geq \alpha .$ Then
\begin{equation*}
\rho (\epsilon ,h)\leq \epsilon ^{2-2\alpha }h+2h+\epsilon ^{3-\alpha
}=\epsilon ^{2-2\alpha +\ell }+\epsilon ^{\ell }+\epsilon ^{3-\alpha }
\end{equation*}%
and $C\approx 1/h=\epsilon ^{-\ell }.$ The optimal $\ell =1+\alpha ,$ for
which $\rho (\epsilon ,h)=O(\epsilon ^{3-\alpha })$ and the convergence
order in terms of cost is $(3-\alpha )/(1+\alpha ),$ which is much better
(e.g., for $\alpha =3/2,$ the order is $3/5$ and it cannot be smaller than $%
1/3$ for any $\alpha \in (1,2)$). Note that in the case of symmetric measure
$\nu (z)$ (see Remark~\ref{rem:sym}), convergence is linear in cost for $%
\alpha \in (1,2).$\medskip\

To conclude, for $\alpha \in (0,1]$ we have first order convergence and
there is no benefit of restricting jump adapted steps by $h$ (see a similar
result in the case of the Cauchy problem and not restricted jump-adapted
steps in \cite{KOT13}). However, in the case of $\alpha \in (1,2),$ it is
beneficial to use restricted jump-adapted steps to get the order of $%
(3-\alpha )/(1+\alpha ).$ We also recall that restricted jump-adapted steps
should typically be used for jump-diffusions (the finite activity case when
there is no singularity of $\lambda _{\epsilon }$ and $\gamma _{\epsilon }$)
because jump time increments $\delta $ typically take too large values and
to control the error at every step we should truncate those times at a
sufficiently small $h>0$ for a satisfactory accuracy.

\section{Numerical experiments\label{sec:num}}

In this section we illustrate the theoretical results of Section~\ref%
{sec:weak}. In particular, we display the behaviour in the case of infinite
intensity of jumps for different regimes of $\alpha $. We showcase numerical
tests of Algorithm~\ref{Hca01} in three different examples: (i) a
non-singular L\'evy measure (Example~\ref{ex:non-singular}), (ii) a singular
L\'evy measure which is similar to that of Example~\ref{ex:alpha-stable}
(see Example~\ref{ex:singular-measure}), and (iii) pricing a
foreign-exchange (FX) barrier basket option where the underlying model is of
exponential L\'evy-type (Example~\ref{ex:finance}).

As it is usual for weak approximation (see e.g. \cite{MT04}), in simulations
we compliment Algorithm~\ref{Hca01} by the Monte Carlo techniques and
evaluate $u(t_{0},x)$ or $u^{\epsilon }(t_{0},x)$ as
\begin{equation}
\bar{u}(t_{0},x):=\mathbb{E}\left[ \varphi (\bar{\vartheta}_{\varkappa
},X_{\varkappa })Y_{\varkappa }+Z_{\varkappa }\right] \simeq \hat{u}=\frac{1%
}{M}\sum\limits_{m=1}^{M}\left[ \varphi (\bar{\vartheta}_{\varkappa
}^{(m)},X_{\varkappa }^{(m)})Y_{\varkappa }^{(m)}+Z_{\varkappa }^{(m)}\right]
,  \label{eq:MC}
\end{equation}%
where $(\bar{\vartheta}_{\varkappa }^{(m)},X_{\varkappa }^{(m)},Y_{\varkappa
}^{(m)},Z_{\varkappa }^{(m)})$ are independent realisations of $(\bar{%
\vartheta}_{\varkappa },X_{\varkappa },Y_{\varkappa },Z_{\varkappa })$. The
Monte Carlo error of (\ref{eq:MC}) is
\begin{equation*}
\sqrt{D_{M}}:=\frac{(\text{Var}f(\bar{X}(T)))^{1/2}}{M^{1/2}}\simeq \sqrt{%
\bar{D}_{M}}\,,
\end{equation*}%
where
\begin{equation*}
\bar{D}_{M}=\frac{1}{M}\left[ \frac{1}{M}\sum_{m=1}^{M}\left[ \Xi ^{(m)}%
\right] ^{2}-\left( \frac{1}{M}\sum_{m=1}^{M}\Xi ^{(m)}\right) ^{2}\right] ,
\end{equation*}%
and $\Xi ^{(m)}=\varphi \left( \bar{\vartheta}_{\varkappa
}^{(m)},X_{\varkappa }^{(m)}\right) Y_{\varkappa }^{(m)}+Z_{\varkappa
}^{(m)}.$ Then $\bar{u}(t_{0},x)$ falls in the corresponding confidence
interval $\hat{u}\pm 2\sqrt{\bar{D}_{M}}$ with probability $0.95$.

\subsection{Example with a non-singular L\'evy measure}

\label{ssec:exa-non-singular} In this subsection, we illustrate Algorithm~%
\ref{Hca01} in the case of a simple non-singular L\'{e}vy measure (i.e., the
jump-diffusion case), where there is no need to replace small jumps and
hence we directly approximate $u(t_{0},x)$ rather than $u^{\epsilon
}(t_{0},x).$ Consequently, the numerical integration error does not depend
on $\epsilon $. We recall (see Theorem~\ref{thm:total-error}) that Algorithm~%
\ref{Hca01} has first order of convergence in $h$.

\begin{example}[Non-singular L\'{e}vy measure]
\label{ex:non-singular} To construct this and the next example, we use the
same recipe as in \cite{MT03,MT04}: we choose the coefficients of the
problem \eqref{eq:problem} so that we can write down its solution
explicitly. Having the exact solution is very useful for numerical tests.

Consider the problem \eqref{eq:problem} with $d=3,$ $G=U_{1}$ which is the
open unit ball centred at the origin in $\mathbb{R}^{3},$ and with the
coefficients
\begin{align}
a^{11}(t,x)& =1.21-x_{2}^{2}-x_{3}^{2},\quad a^{22}=1,\quad a^{33}=1,
\label{ex_ab} \\
a^{ij}& =0,\ i\neq j,  \notag \\
b& =0,  \notag
\end{align}%
\begin{equation}
F(t,x)=(f,f,f)^{T},\ f\in \mathbb{R},  \label{ex_F}
\end{equation}%
\begin{align}
g(t,x)& :=\frac{1}{2}e^{T-t}(1.21-x_{1}^{4}-x_{2}^{4})+6(1-\frac{1}{2}%
e^{T-t})\left[ x_{1}^{2}(1.21-x_{2}^{2}-x_{3}^{2})+x_{2}^{2}\right]
\label{ex_g} \\
& \quad +(1-\frac{1}{2}e^{T-t})\Big[(C_{+}-C_{-})\frac{4f}{\mu ^{2}}%
(x_{1}^{3}+x_{2}^{3})+(C_{+}+C_{-})\frac{12f^{2}}{\mu ^{3}}%
(x_{1}^{2}+x_{2}^{2})  \notag \\
& \quad +(C_{+}-C_{-})\frac{24f^{3}}{\mu ^{4}}(x_{1}+x_{2})+(C_{+}+C_{-})%
\frac{48f^{4}}{\mu ^{5}}\Big],  \notag
\end{align}%
with the boundary condition
\begin{equation}
\varphi (t,x)=(1-\tfrac{1}{2}\mathrm{e}^{T-t})(1.21-x_{1}^{4}-x_{2}^{4})
\label{ex_fi}
\end{equation}%
and with the L\'{e}vy measure density
\begin{equation*}
\nu (\mathrm{d}z)=%
\begin{cases}
C_{-}\mathrm{e}^{-\mu |z|}\mathrm{d}z, & \text{if $z<0$}, \\
C_{+}\mathrm{e}^{-\mu |z|}\mathrm{d}z, & \text{if $z>0$},%
\end{cases}%
\end{equation*}%
where $C_{-}$ and $C_{+}$ are some positive constants.

It is not difficult to verify that this problem has the solution
\begin{equation*}
u(t,x)=(1-\tfrac{1}{2}\mathrm{e}^{T-t})(1.21-x_{1}^{4}-x_{2}^{4}).
\end{equation*}%
and we also find
\begin{align*}
\lambda & =\int_{|z|>0}\nu (\mathrm{d}z)=\int_{\mathbb{R}}\nu (\mathrm{d}z)=%
\frac{C_{+}+C_{-}}{\mu }, \\
\rho (z)& =\frac{C_{-}\mathrm{e}^{-\mu |z|}\mathbf{I}_{\{z<0\}}+C_{+}\mathrm{%
e}^{-\mu |z|}\mathbf{I}_{\{z>0\}}}{\lambda }.
\end{align*}%
We simulated jump sizes by analytically inverting the cumulative
distribution function corresponding to the density $\rho (z)$ and making use
of uniform random numbers in the standard manner.

\begin{table}[h]
\centering
\begin{minipage}{.45\textwidth}
	  \centering
	  \includegraphics[width=1.1\linewidth]{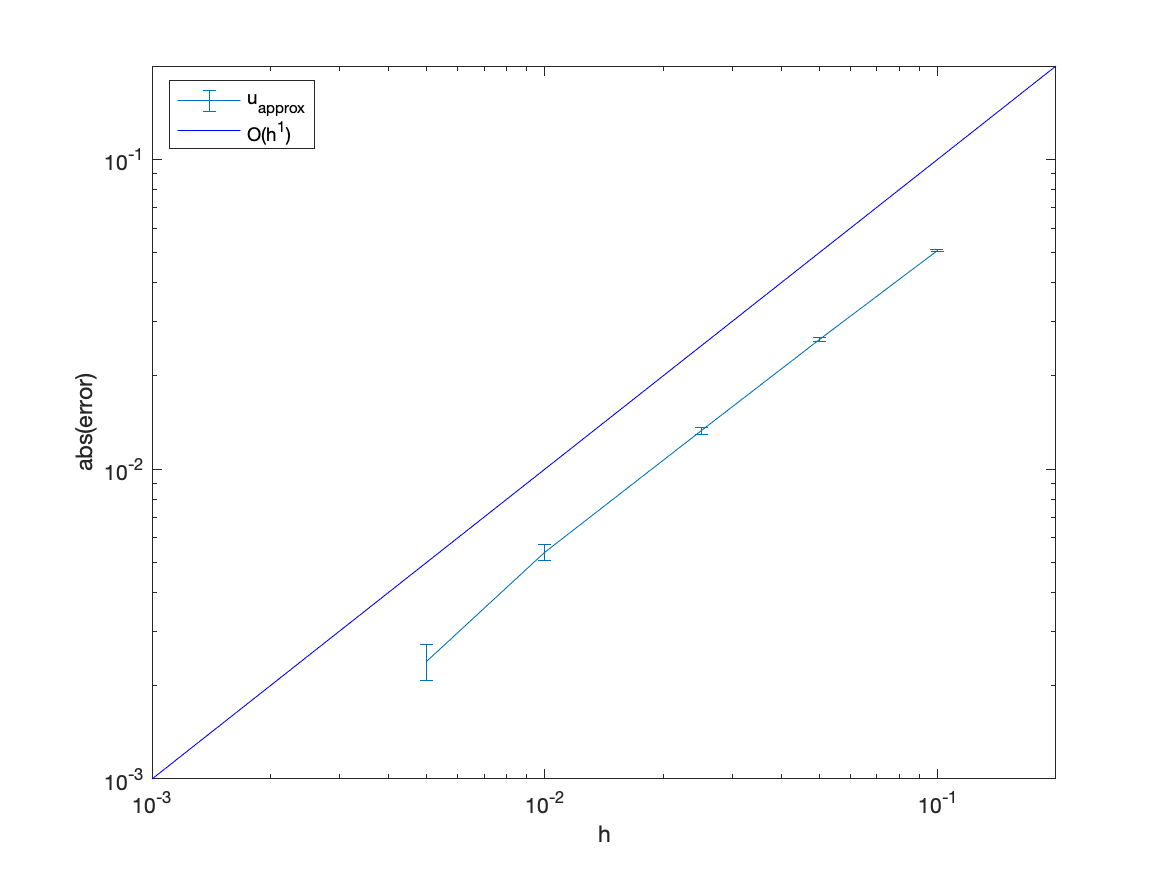}
	  \captionof{figure}{Non-singular L\'evy measure example: dependence of the error $e$ on $h$,
the error bars show the Monte Carlo error. The parameters used are
$T=1, C_{+}=30, C_{-}=1.0, \mu = 3.0, f=0.1, M = 40 000 000$ and $\hat u$ is evaluated at the point $(0,0)$.}
	  \label{fig:Ex1}
	\end{minipage}
\begin{minipage}{0.5\linewidth}
\caption{Non-singular L\'evy measure example. The parameters are the same as in Figure~\ref{fig:Ex1}.
The column $\hat{\varkappa}$ gives the sample average of the number of steps together with its Monte Carlo error.}
		\centering
		\vfill
		\begin{tabular}{ccccc}
			\hline
			$h$ & $\hat u$ & $2\sqrt{\hat D_M}$  & $e$  & $\hat{\varkappa}$\\ \hline
			0.1                   & 0.9367       & 0.0004 & 0.0507 & $7.72\pm 0.0037$   \\ \hline
			0.05                 & 0.9612       & 0.0004 & 0.0262 & $11.04\pm 0.0056$   \\ \hline
			0.025               & 0.9742       & 0.0004 & 0.0133 & $17.85\pm 0.0096$   \\ \hline
			0.01                 & 0.9821       & 0.0003 & 0.0054 & $37.85\pm 0.0217$   \\ \hline
			0.005               & 0.9850       & 0.0003 & 0.0024 & $70.90\pm 0.0416$  \\ \hline
		\end{tabular}	
		\label{tab:Ex1}
	\end{minipage}\hfill
\end{table}

Here the absolute error $e$ is given by
\begin{equation}
e=|\hat{u}-u|.  \label{eq:error_u}
\end{equation}%
The expected convergence order $O(h)$ can be clearly seen in Figure~\ref%
{fig:Ex1} and Table~\ref{tab:Ex1}.
\end{example}

\subsection{Example with a singular L\'evy measure}

\label{ssec:exa-singular} In this subsection, we confirm dependence of the
error of Algorithm~\ref{Hca01} on the cut-off parameter $\epsilon $ for jump
sizes and on the parameter $\alpha $ of the L\'evy measure as well as
associated computational costs which were derived in Section~\ref{sec:iij}.

\begin{example}[Singular L\'{e}vy measure]
\label{ex:singular-measure} Consider the problem \eqref{eq:problem} with $%
d=3,$ $G=U_{1}$ which is the open unit ball centred at the origin in $%
\mathbb{R}^{3},$ and with the coefficients as in (\ref{ex_ab}), (\ref{ex_F}%
), and%
\begin{align}
g(t,x)& :=\frac{1}{2}e^{T-t}(1.21-x_{1}^{4}-x_{2}^{4})+6(1-\frac{1}{2}%
e^{T-t})\left[ x_{1}^{2}(1.21-x_{2}^{2}-x_{3}^{2})+x_{2}^{2}\right]
\label{ex_g2} \\
& \quad +(1-\frac{1}{2}e^{T-t})\Big[(C_{+}-C_{-})f\left( \frac{4}{\mu }+%
\frac{4}{\mu ^{2}}\right) (x_{1}^{3}+x_{2}^{3})  \notag \\
& \quad +(C_{+}+C_{-})f^{2}\left( \frac{6}{2-\alpha }+\frac{6}{\mu }+\frac{12%
}{\mu ^{2}}+\frac{12}{\mu ^{3}}\right) (x_{1}^{2}+x_{2}^{2})  \notag \\
& \quad +(C_{+}-C_{-})f^{3}\left( \frac{4}{3-\alpha }+\frac{4}{\mu }+\frac{12%
}{\mu ^{2}}+\frac{24}{\mu ^{3}}+\frac{24}{\mu ^{4}}\right) (x_{1}+x_{2})
\notag \\
& \quad +(C_{+}+C_{-})f^{4}\left( \frac{2}{4-\alpha }+\frac{2}{\mu }+\frac{8%
}{\mu ^{2}}+\frac{24}{\mu ^{3}}+\frac{48}{\mu ^{4}}+\frac{48}{\mu ^{5}}%
\right) \Big],  \notag
\end{align}%
with the boundary condition (\ref{ex_fi}), and with the L\'{e}vy measure
density
\begin{equation}
\nu (\mathrm{d}z)=%
\begin{cases}
C_{-}\mathrm{e}^{-\mu (|z|-1)}\mathrm{d}z, & \text{if $z<-1$}, \\
C_{-}|z|^{-(\alpha +1)}\mathrm{d}z, & \text{if $-1\leq z<0$}, \\
C_{+}|z|^{-(\alpha +1)}\mathrm{d}z, & \text{if $0<z\leq 1$}, \\
C_{+}\mathrm{e}^{-\mu (|z|-1)}\mathrm{d}z, & \text{if $z>1$},%
\end{cases}
\label{ex2_de}
\end{equation}%
where $C_{-},$ $C_{+},$ and $\mu $ are some positive constants and $\alpha
\in (0,2)$.

Note that $C_{-}\neq C_{+}$ gives an asymmetric jump measure and the L\'evy
process has infinite activity and variation.

It is not difficult to verify that this problem has the following solution
\begin{equation*}
u(t,x)=(1-\tfrac{1}{2}\mathrm{e}^{T-t})(1.21-x_{1}^{4}-x_{2}^{4}).
\end{equation*}%
Other quantities needed for the algorithm take the form%
\begin{align*}
\gamma _{\epsilon }& =(C_{+}-C_{-})\frac{1-\epsilon ^{1-\alpha }}{1-\alpha }%
,\quad \text{$\alpha \neq 1,$} \\
B_{\epsilon }& =(C_{+}+C_{-})\frac{\epsilon ^{2-\alpha }}{2-\alpha }, \\
\beta _{\epsilon }& =\sqrt{B_{\epsilon }}=\sqrt{(C_{+}+C_{-})\frac{\epsilon
^{2-\alpha }}{2-\alpha }},
\end{align*}%
\begin{align*}
\lambda _{\epsilon }& =\int_{|z|>\epsilon }\nu (\mathrm{d}%
z)=(C_{+}+C_{-})\left( \frac{1}{\mu }+\frac{\epsilon ^{-\alpha }-1}{\alpha }%
\right) , \\
\rho _{\epsilon }(z)& =\frac{1}{\lambda _{\epsilon }}[C_{-}\mathrm{e}^{-\mu
(|z|-1)}\mathbf{I}_{\{z<-1\}}+C_{-}|z|^{-(\alpha +1)}\mathbf{I}_{\{-1\leq
z<-\epsilon \}} \\
& +C_{+}|z|^{-(\alpha +1)}\mathbf{I}_{\{\epsilon <z\leq 1\}}+C_{+}\mathrm{e}%
^{-\mu (|z|-1)}\mathbf{I}_{\{z>1\}}],
\end{align*}%
In this example, the absolute error $e$ is given by
\begin{equation}
e=|\hat{u}^{\epsilon }-u|.  \label{eq:error_ue}
\end{equation}

\begin{figure}[h!]
\centering
\begin{minipage}[t]{.45\textwidth}
  \centering
  \includegraphics[width=\linewidth]{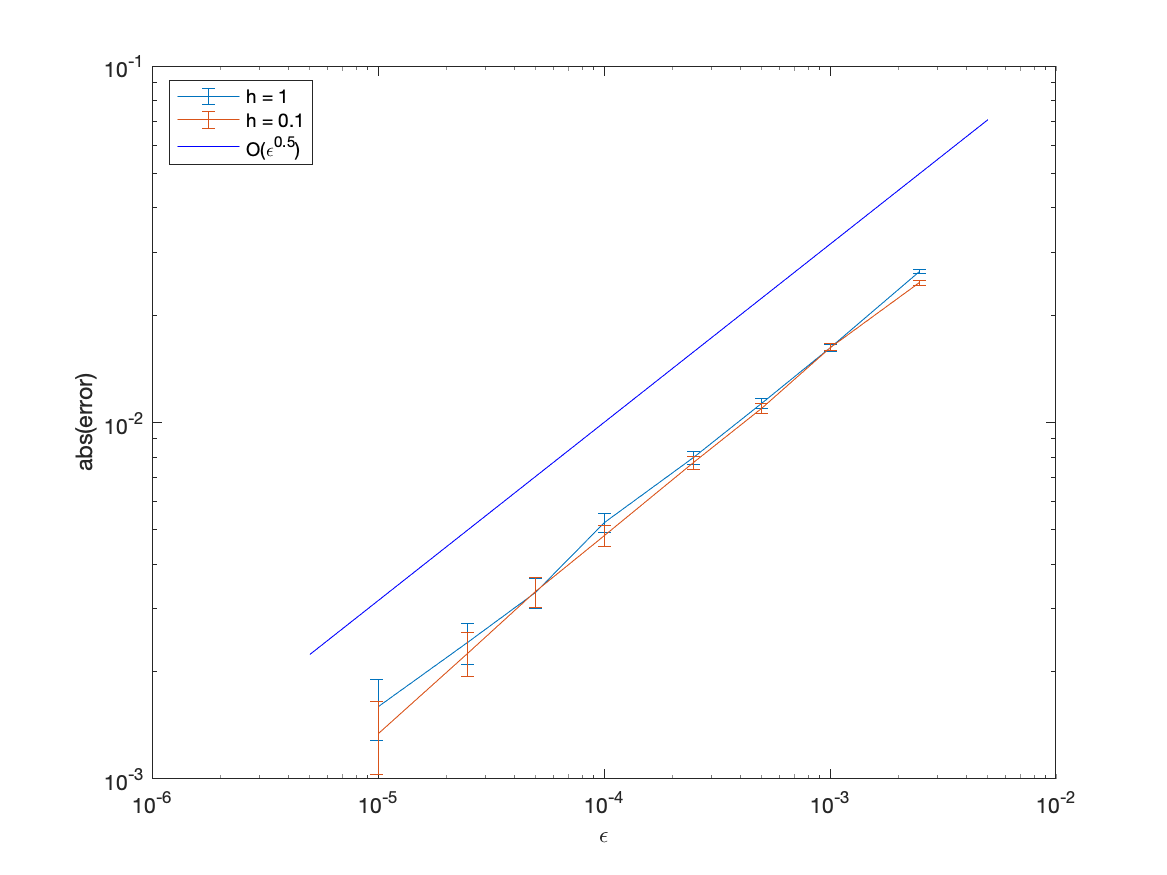}
  \captionof{figure}{Singular L\'evy measure example, the case $\alpha=0.5$: dependence of the error $e$ on $\epsilon$,
the error bars show the Monte Carlo error. The parameters used are $T=1, C_{+}=0.1, C_{-}=1.0, \mu = 3.0, f=0.2, M = 40 000 000$
and $\hat u$ is evaluated at the point $(0,0)$.}
  \label{fig:Ex2-alpha-0.5-eps-error}
\end{minipage}%
\begin{minipage}[t]{.45\textwidth}
  \centering
  \includegraphics[width=\linewidth]{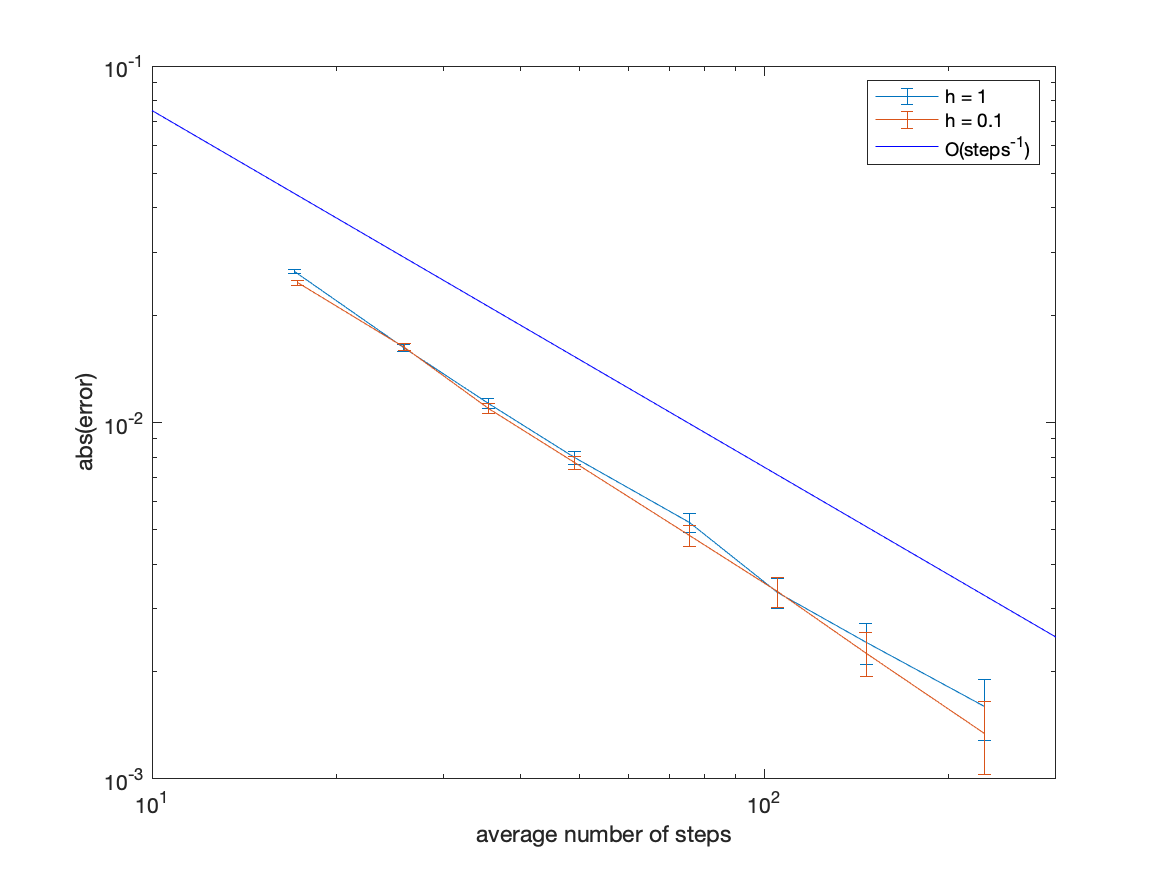}
  \captionof{figure}{Singular L\'evy measure example, the case $\alpha=0.5$: dependence of the error $e$ on the average number of steps (computational costs).
  The parameters are the same as in Figure~\ref{fig:Ex2-alpha-0.5-eps-error}.}
  \label{fig:Ex2-alpha-0.5-steps-error}
\end{minipage}
\end{figure}

\begin{table}[h]
\caption{Singular L\'evy measure example for $\protect\alpha =0.5$ and $h=1$%
. The parameters are the same as in Figure~\protect\ref%
{fig:Ex2-alpha-0.5-eps-error}. The column $\hat{\varkappa}$ gives the sample
average of the number of steps together with its Monte Carlo error.}
\label{tab:Ex2-alpha-0.5}\centering
\begin{tabular}{ccccccc}
\hline
$\epsilon$ & $\hat u$ & $2\sqrt{\hat D_M}$ & $e$ & $\lambda_{\epsilon}$ & $%
\gamma_{\epsilon}$ & $\hat{\varkappa}$ \\ \hline
0.0025 & 0.9610 & 0.0004 & 0.0265 & 42.2 & -1.71 & $17.10\pm 0.0096$ \\
\hline
0.001 & 0.9713 & 0.0004 & 0.0162 & 67.7 & -1.74 & $25.78\pm 0.0149$ \\ \hline
0.0005 & 0.9761 & 0.0004 & 0.0113 & 96.6 & -1.76 & $35.45\pm 0.0208$ \\
\hline
0.00025 & 0.9795 & 0.0003 & 0.0080 & 137.3 & -1.77 & $48.96\pm 0.0290$ \\
\hline
0.0001 & 0.9822 & 0.0003 & 0.0052 & 218.2 & -1.78 & $75.53\pm 0.0452$ \\
\hline
0.00005 & 0.9841 & 0.0003 & 0.0033 & 309.3 & -1.79 & $105.32\pm 0.0633$ \\
\hline
0.000025 & 0.9850 & 0.0003 & 0.0024 & 438.2 & -1.79 & $147.07\pm 0.0888$ \\
\hline
0.00001 & 0.9858 & 0.0003 & 0.0016 & 693.9 & -1.79 & $229.51\pm 0.1393$ \\
\hline
\end{tabular}%
\end{table}
For the case of $\alpha =0.5$, we can clearly see in Figure~\ref%
{fig:Ex2-alpha-0.5-eps-error} and Table~\ref{tab:Ex2-alpha-0.5} that the
error is of order $O(\epsilon ^{\alpha })=O(\epsilon ^{0.5})$ as expected.
We also observe linear convergence in computational cost (measured in
average number of steps). In addition we note that choosing a smaller time
step, e.g. $h=0.1,$ does not change the behaviour in this case which is in
accordance with our prediction of Section~\ref{sec:iij}

\begin{figure}[h]
\centering
\begin{minipage}[t]{.5\textwidth}
  \centering
  \includegraphics[width=1.1\linewidth]{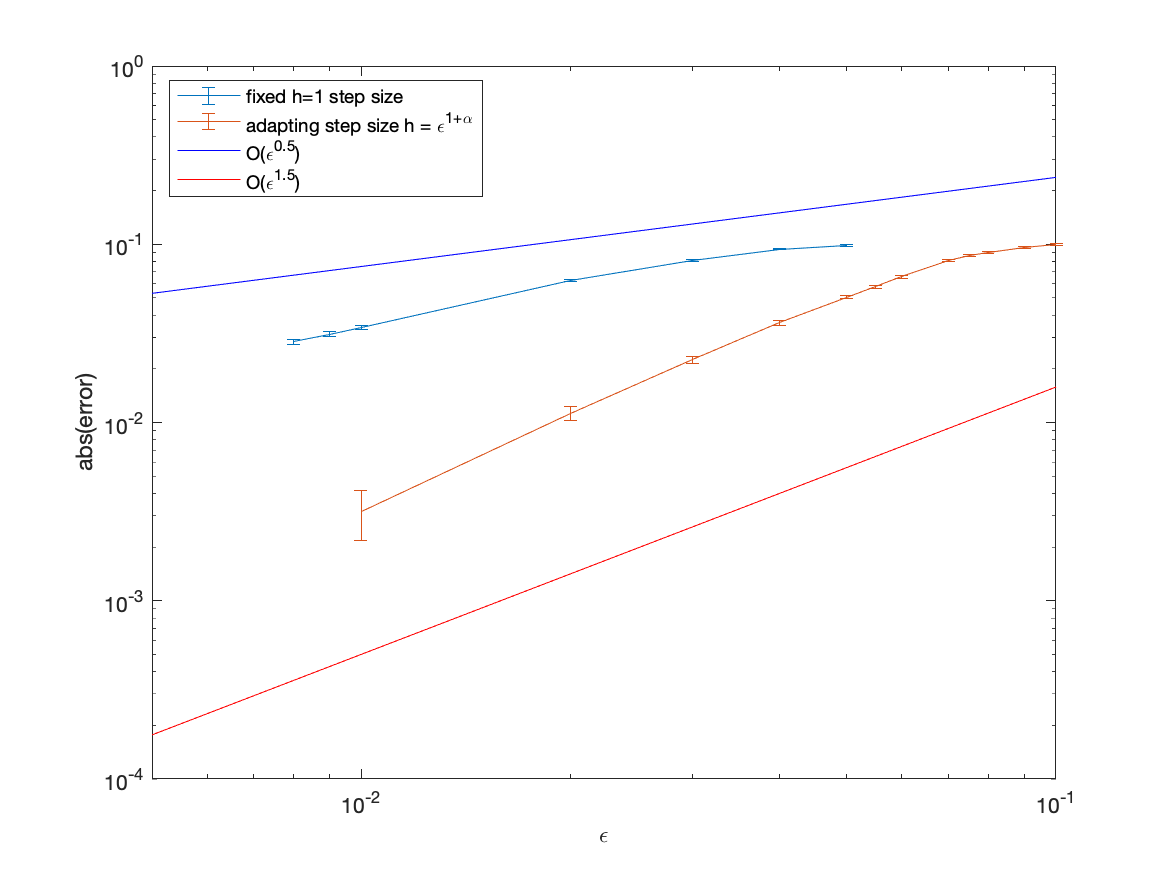}
  \captionof{figure}{Singular L\'evy measure example, the case $\alpha=1.5$: dependence of the error $e$ on $\epsilon$, the error bars show the Monte Carlo error.
  The parameters used are $T=1, C_{+}=1.0, C_{-}=25.0, \mu = 3.0, f=1.0, M = 100 000 000$ and $\hat u$ is evaluated at the point $(0,0)$.}
  \label{fig:Ex5-alpha-1.5-compar-eps-error}
\end{minipage}%
\begin{minipage}[t]{.5\textwidth}
  \centering
 \includegraphics[width=1.1\linewidth]{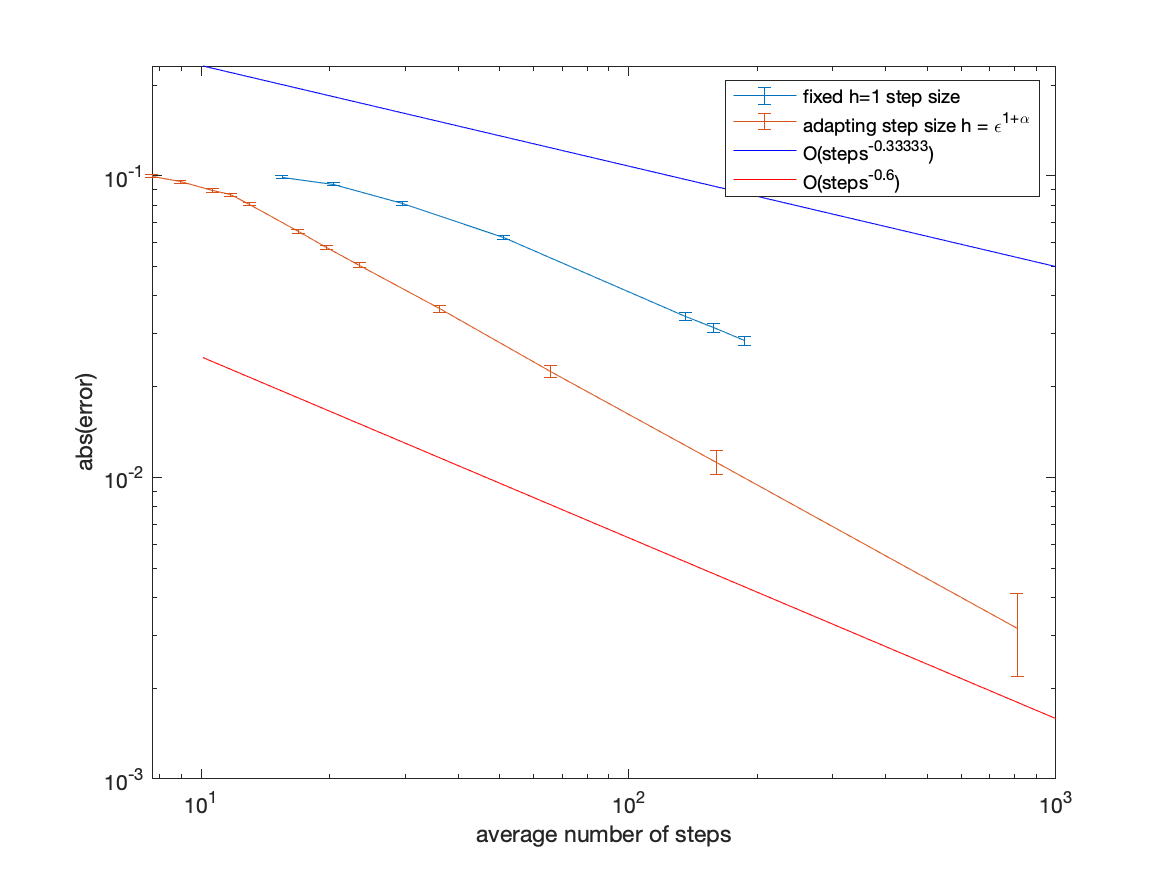}
  \captionof{figure}{Singular L\'evy measure example, the case $\alpha=1.5$: dependence of the error $e$ on the average number of steps (computational costs),
  the error bars show the Monte Carlo error. The parameters are the same as in Figure~\ref{fig:Ex5-alpha-1.5-compar-eps-error}.}
  \label{fig:Ex5-alpha-1.5-compar-steps-error}
\end{minipage}
\end{figure}
Numerical results for the case $\alpha =1.5$ are given in Figures~\ref%
{fig:Ex5-alpha-1.5-compar-eps-error} and \ref%
{fig:Ex5-alpha-1.5-compar-steps-error}. As is shown in Section~\ref{sec:iij}%
, convergence (in terms of computational costs) can be improved in the case
of $\alpha \in (1,2)$ by choosing $h=\epsilon ^{1+\alpha }$. In Figure~\ref%
{fig:Ex5-alpha-1.5-compar-steps-error}, for all $\epsilon $ it can be seen
that choosing a smaller (but optimally chosen) step parameter $h$ results in
quicker convergence (i.e., for the same cost, we can achieve a better result
if $h$ is chosen in an optimal way) and naturally in a smaller error.
\end{example}

We recall that if the jump measure is symmetric, i.e. $C_{-}=C_{+}$ in the
considered example, then $\gamma _{\epsilon }=0$ and the numerical
integration error of Algorithm~\ref{Hca01} is no longer singular (see
Theorem~\ref{thm:total-error} and Remark~\ref{rem:sym}). Consequently (see
Section~\ref{sec:iij}), in this case the computational cost depends linearly
on $\epsilon $ even for $\alpha =1.5,$ which is confirmed on Figure~\ref%
{fig:Ex4-alpha-1.5-eps-error}.
\begin{figure}[h]
\centering
\includegraphics[width=0.55\linewidth]{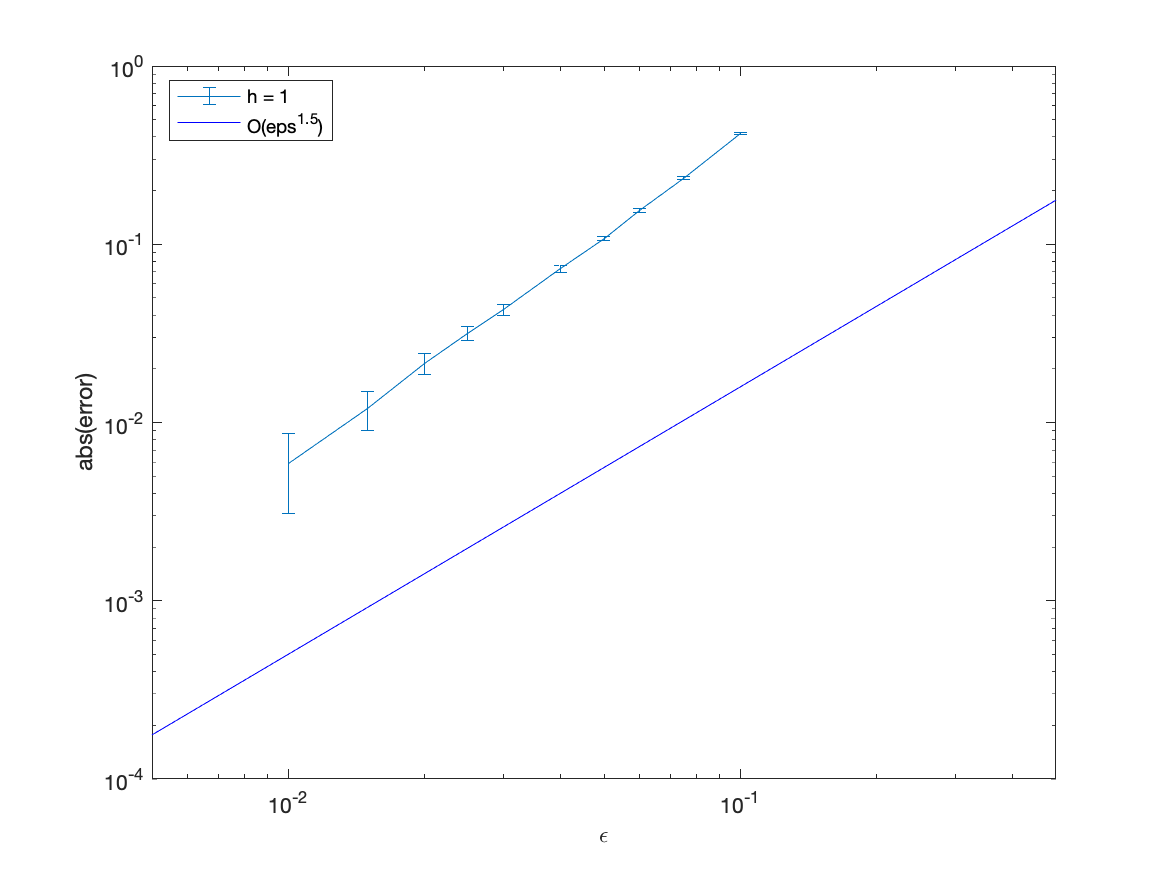}
\captionof{figure}{Dependency of $\epsilon$ on $error$ plot for a simulation
example with symmetric singular L\'evy measure for $\alpha=1.5$. The parameters used are
  $T=1, C_{+}=0.5, C_{-}=0.5, \mu = 3.0, f=1.0, M = 100 000 000$ and $\hat u$ is evaluated at the point $(0,0)$.}
\label{fig:Ex4-alpha-1.5-eps-error}
\end{figure}

\subsection{FX option pricing under a L\'evy-type currency exchange model}

\label{ssec:FX-option-levy}In this subsection, we demonstrate the use of
Algorithm~\ref{Hca01} for pricing financial derivatives where underliers
follow a L\'evy process. We apply the algorithm to estimate the price of a
foreign exchange (FX) barrier basket option. A barrier basket option gives
the holder the right to buy or sell a certain basket of assets (here foreign
currencies) at a specific price $K$ at maturity $T$ in the case when a
certain barrier event has occurred. The most used barrier-type options are
knock-in and knock-out options. This type of option becomes active (or
inactive) in the case of the underlying price $S(t)$ reaching a certain
threshold (the barrier) $B$ before reaching its maturity. In most cases
barrier option prices cannot be given explicitly and therefore have to be
approximated.

\begin{example}[Barrier basket option pricing]
\label{ex:finance} Let us consider the case with five currencies: GBP, USD,
EUR, JPY and CHF and let us assume the domestic currency is GBP. We denote
the corresponding spot exchange rates as
\begin{align*}
S_{1}(t)& =S_{USDGBP}(t), \\
S_{2}(t)& =S_{EURGBP}(t), \\
S_{3}(t)& =S_{JPYGBP}(t), \\
S_{4}(t)& =S_{CHFGBP}(t),
\end{align*}%
where $S_{FORDOM}(t)$ describes the amount of domestic currency DOM one
pays/receives for one unit of foreign currency FOR (for more details see
\cite{Wystup,castagna}). We assume that under a risk-neutral measure $%
\mathbb{Q}$ the dynamics for the spot exchange rates can be written as
\begin{equation*}
S_{i}(t)=S_{i}(t_{0})\exp ((r_{GBP}-r_{i})(t-t_{0})+X_{i}(t)),\qquad
i=1,2,3,4,
\end{equation*}%
where $r_{i}$ are the corresponding short rates of USD, EUR, JPY, CHF and $%
r_{GBP}$ is the short rate for GBP, which are for simplicity assumed to be
constant; and $X(t)$ is a 4-dimensional L\'evy process similar to~%
\eqref{eq:generalsde} with a single jump noise:
\begin{equation}
X(t)=\int\limits_{t_{0}}^{t}b(t,X(s-))ds+\int\limits_{t_{0}}^{t}\sigma
(s,X(s-))dW^{\mathbb{Q}}(t)+\int\limits_{t_{0}}^{t}\int_{\mathbb{R}%
}F(s,S(s-))z\tilde{N}(dz,ds).  \label{FX_X}
\end{equation}%
Here $W(t)=(W_{1}(t),W_{2}(t),W_{3}(t),W_{4}(t))^{\top }$ is a 4-dimensional
standard Wiener process. As $\nu (z),$ we choose the L\'evy measure with
density (\ref{ex2_de}) as in Example~\ref{ex:singular-measure} and we take $%
F(t,x)=(f_{1},f_{2},f_{3},f_{4})^{\top }$ and we will assume that $\sigma
(s,x)$ is a constant $4\times 4$ matrix.

Under the measure $\mathbb{Q}$ all the discounted assets $\hat{S}%
_{i}(t)=e^{-r_{GBP}(t)}S_{i}(t)=S_{i}(t_{0})\exp (-r_{i}(t-t_{0})+X_{i}(t))$
have to be martingales on the domestic market (therefore discounted by the
domestic interest rate) to avoid arbitrage. Using the Ito formula for L\'{e}%
vy processes, we can derive the SDEs for $\tilde{S}_{i}$%
\begin{align*}
\frac{d\tilde{S}_{i}}{\tilde{S}_{i}}& =\left[ -r_{i}+b_{i}(t,X(s-))+\frac{1}{%
2}\sum\limits_{j=1}^{4}\sigma _{ij}+\int\limits_{\mathbb{R}}\left(
e^{f_{i}z}-1-f_{i}z\mathbf{I}_{|z|<1}\right) \nu (dz)\right] dt \\
& \quad +\sum\limits_{j=1}^{4}\sigma _{ij}dW_{j}^{\mathbb{Q}}(t)+\int_{%
\mathbb{R}}f_{i}z\tilde{N}(dz,ds).
\end{align*}%
Hence, for all $\tilde{S}_{i}$ to be martingales, the drift component $b_{i}$
has to be so that
\begin{align*}
b_{i}& =r_{i}-\frac{1}{2}\sum\limits_{j=1}^{4}\sigma _{ij}-\int\limits_{%
\mathbb{R}}\left( e^{f_{i}z}-1-f_{i}z\mathbf{I}_{|z|<1}\right) \nu (dz) \\
& =r_{i}-\frac{1}{2}\sum\limits_{j=1}^{4}\sigma _{ij}-\frac{C_{-}}{\mu +f_{i}%
}e^{-f_{i}}-\frac{C_{+}}{\mu -f_{i}}e^{f_{i}}-\frac{C_{+}-C_{-}}{\mu }%
-I_{i}(\alpha ,C_{+},C_{-}),
\end{align*}%
where
\begin{equation*}
I_{i}(\alpha ,C_{+},C_{-})=\sum\limits_{n=2}^{\infty }\frac{%
(C_{+}+C_{-}(-1)^{n})f_{i}^{n}}{n!(n-\alpha )}.
\end{equation*}%
We also note that
\begin{equation*}
\int\limits_{|z|>1}e^{f_{i}z}\nu (dz)<\infty
\end{equation*}%
is satisfied by (\ref{ex2_de}).

Let us consider an international company based in the UK. If it wants to
protect itself against large FX rate fluctuations, they could hedge their
exposure for each foreign currency on its own. Alternatively, they could use
a knock-in barrier basket option to protect themselves against all the
currency exposure they have, which is in most cases a cheaper way. The value
for such a (down-and-in) put option can be written as
\begin{equation}
P_{t_{0}}(T,K)=\exp ^{-r_{GBP}(T-t_{0})}\mathbb{E}\left[ \mathbf{I}%
_{\min\limits_{t_{0}\leq t\leq T}S(t)<B}\max \left(
K-\sum\limits_{i=1}^{n}w_{i}S_{i}(T),0\right) \right] ,
\label{eq:barrier-price}
\end{equation}%
where $\mathbf{I}_{\min\limits_{t_{0}\leq t\leq T}S(t)<B}=1$ if for any of
the underlying exchange rates $S_{i}(t)<B_{i},\ t_{0}\leq t\leq T$,
otherwise it is zero.

We use Algorithm~\ref{Hca01} together with the Monte Carlo technique to
evaluate this barrier basket option price~\eqref{eq:barrier-price}. In Table~%
\ref{tab:Ex_finance-market-data}, market data for the 4 currency pairs are
given, and in Table~\ref{tab:Ex_finance-option-data} the option and model
parameters are provided, which are used in simulations here.

\begin{table}[h]
\caption{Market data for 4 currency pairs. Here $\protect\sigma _{i}$ are
volatilities for the corresponding pairs and $\protect\rho _{ij}$ are the
correlation coefficients for the corresponding two pairs.}
\label{tab:Ex_finance-market-data}\centering
\begin{tabular}{ccccccc}
\hline
\multicolumn{4}{c}{Market data} & \multicolumn{3}{c}{Correlation data $%
\rho_{ij}$} \\ \hline
currency pair $i$ & $S_i(0)$ & $r_i$ & $\sigma_i$ & USDGBP & EURGBP & JPYGBP
\\ \hline
USDGBP & 0.81 & 0.02 & 0.095 &  &  &  \\ \hline
EURGBP & 0.88 & 0.00 & 0.089 & 0.87 &  &  \\ \hline
JPYGBP & 0.0075 & -0.011 & 0.071 & 0.94 & 0.77 &  \\ \hline
CHFGBP & 0.90 & 0.075 & 0.110 & 0.86 & 0.93 & 0.96 \\ \hline
& $r_{GBP}$ & 0.01 &  &  &  &  \\ \hline
\end{tabular}%
\end{table}
\begin{table}[h]
\caption{Option and model parameters for Example~\protect\ref{ex:finance}}
\label{tab:Ex_finance-option-data}\centering
\begin{tabular}{cccccccc}
\hline
\multicolumn{5}{c}{Option parameter} & \multicolumn{3}{c}{Model parameter}
\\ \hline
currency pair & Barrier $B_i$ & $w_i$ &  &  & jump factor $f_i$ & $\alpha$ &
1.5 \\ \hline
USDGBP & 0.50 & 0.20 & $t_0$ & 0.0 & 0.10 & $C_+$ & 0.3 \\ \hline
EURGBP & 0.60 & 0.25 & $T$ & 1.0 & 0.15 & $C_-$ & 1.2 \\ \hline
JYNGBP & 0.0045 & 0.45 & $K$ & 0.5 & 0.05 & $\mu$ & 3.0 \\ \hline
CHFGBP & 0.55 & 0.10 &  &  & 0.12 & $M$ & $10^6$ \\ \hline
\end{tabular}%
\end{table}

To find the matrix $\sigma =\{\sigma _{ij}\}$ used in the model (\ref{FX_X}%
), we form the matrix $a$ using the volatility $\sigma _{i}$ and correlation
coefficient data from Table~\ref{tab:Ex_finance-market-data} in the usual
way, i.e., $a_{ii}=\sigma _{i}^{2}$ and $a_{ij}=\sigma _{i}\sigma _{j}\rho
_{ij}$ for $i\neq j.$ Then the matrix $\sigma $ is the solution of $\sigma
\sigma ^{\top }=a$ obtained by the Cholesky decomposition.

The results of the simulations are presented in Figure~\ref%
{fig:Ex_finance-compar-eps-error} for different choices of $\epsilon $ and
different choices of $h$. In Figure~\ref{fig:Ex_finance-compar-steps-error},
it can be seen, that (similar to Example~\ref{ex:singular-measure}) by
choosing the step size $h$ optimally results in a better approximation for
the same cost.

\begin{figure}[h]
\centering
\begin{minipage}[t]{.5\textwidth}
  \centering
  \includegraphics[width=1.1\linewidth]{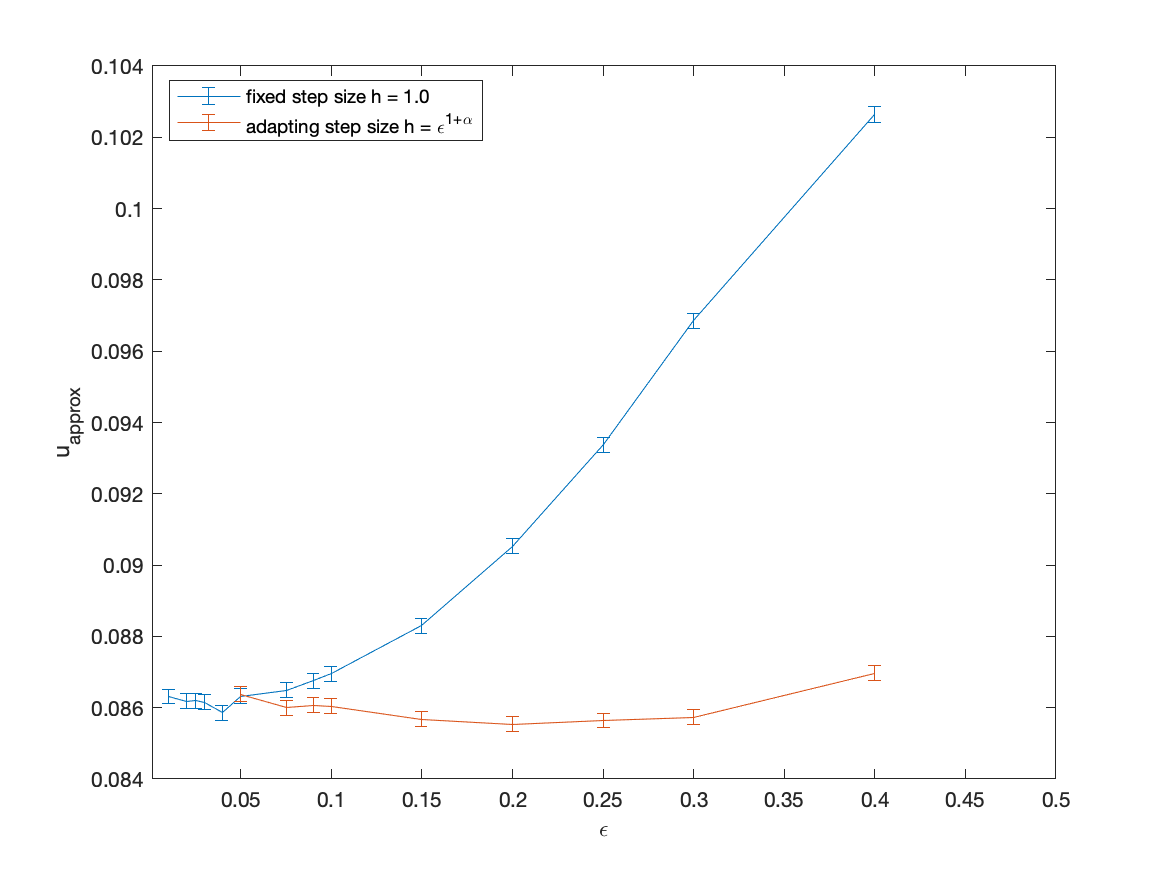}
  \captionof{figure}{Dependence of the approximate price of the FX barrier basket option on $\epsilon$ for different choices of $h$.
  The error bars show the Monte Carlo error.}
  \label{fig:Ex_finance-compar-eps-error}
\end{minipage}%
\begin{minipage}[t]{.5\textwidth}
  \centering
 \includegraphics[width=1.1\linewidth]{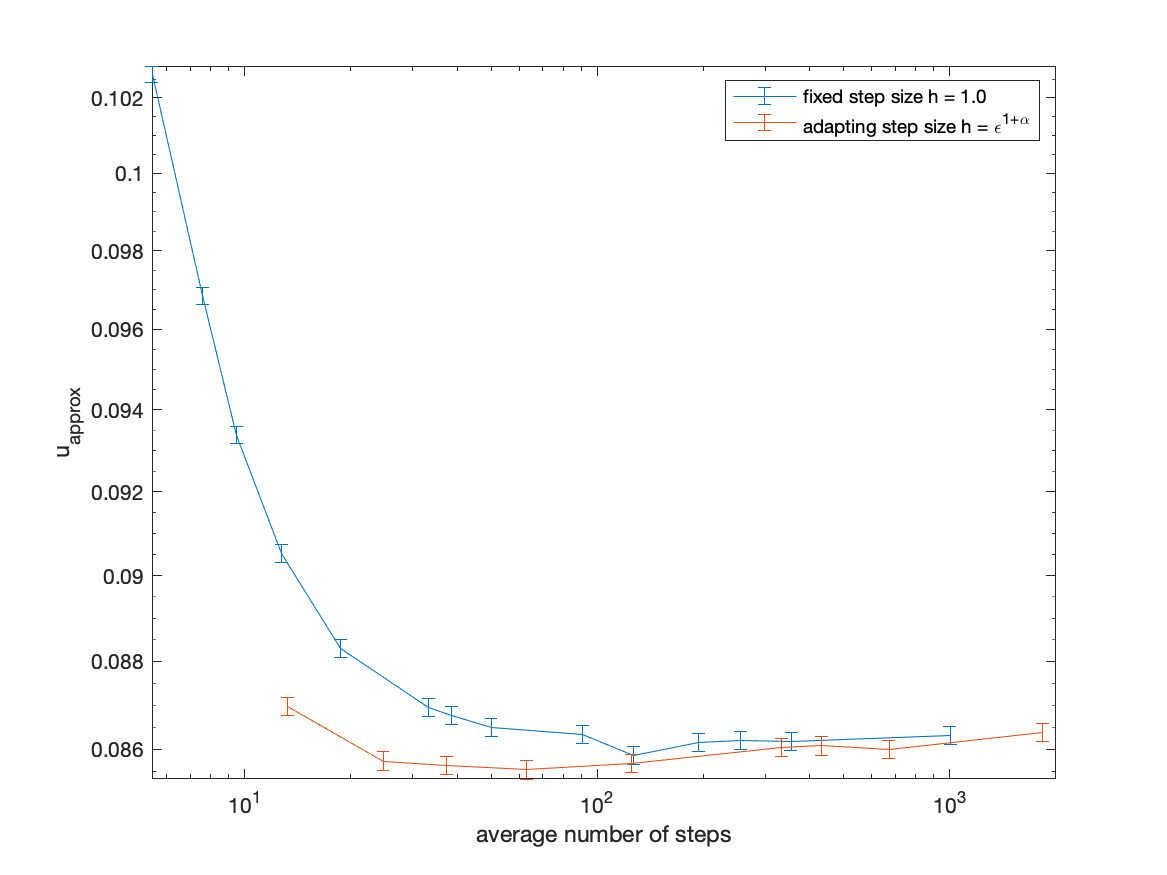}
  \captionof{figure}{Dependence of the approximate price of the FX barrier basket option on average number of steps (computational costs) for different choices of $h$.
  The error bars show the Monte Carlo error.}
  \label{fig:Ex_finance-compar-steps-error}
\end{minipage}
\end{figure}

In this example we demonstrated that Algorithm~~\ref{Hca01} can be
successfully used to price a FX barrier basket option involving 4 currency
pairs following a exponential L\'evy model. In particular, we note that the
algorithm is easy to implement and it gives sufficient accuracy with
relatively small computational costs. Moreover, application of Algorithm~~%
\ref{Hca01} can be easily extended to other multi-dimensional barrier option
(and other types of options and not only on FX markets), while other
approximation techniques such as finite difference methods or Fourier
transform methods typically cannot cope with higher dimensions.
\end{example}

\bibliographystyle{plainnat}
\bibliography{sdeBIB}

\appendix

\setcounter{equation}{0} \renewcommand\theequation{A.\arabic{equation}}

\end{document}